\def\part#1{\bigbreak\noindent{\textbf{\boldmath #1}}\smallbreak} 
\newtheorem{theorem}{Theorem}
\newtheorem*{theorem*}{Bicycle Theorem}
\newtheorem{proposition}{Proposition}[section]
\newtheorem{lemma}[proposition]{Lemma}
\newtheorem{corollary}[proposition]{Corollary}
\theoremstyle{definition}
\newtheorem{example}[proposition]{Example}
\newtheorem*{definition}{Definition}
\newcommand{\thmref}[1]{Theorem~\ref{thm:#1}}
\newcommand{\propref}[1]{Proposition~\ref{prop:#1}}
\newcommand{\corref}[1]{Corollary~\ref{cor:#1}}
\newcommand{\secref}[1]{Section~\ref{sec:#1}}
\newcommand{\figref}[1]{Figure~\ref{fig:#1}}
\newcommand{\exref}[1]{Example~\ref{ex:#1}}
\def\|{|}
\newcommand{\itb}{\bfseries\itshape\boldmath}
\def\bz{\mathbb Z}
\def\br{\mathbb R}
\def\bh{\mathbb H}
\def\bd{\mathbb D}
\def\bF{\mathcal F}
\def\V{\mathbb V}
\def\bl{\mathcal L}
\def\cdotsclose{\cdot\!\cdot\!\cdot}
\def\x{\textup{\bf x}}
\def\y{\textup{\bf y}}
\def\t{\textup{\bf t}}
\def\u{\textup{\bf u}}
\def\vv{\textup{\bf v}}
\newcommand{\vc}[1]{\overset{\scriptstyle{}_\rightharpoonup}{#1}}
\def\vec{\vc{\textup{\bf v}}}
\def\vecn{\vc{\textup{\bf n}}}
\def\vecz{\vc{\textup{\bf z}}}
\def\w{\textup{\bf w}}
\def\a{\textup{\bf a}}
\def\veca{\vc{\textup{\bf a}}}
\def\b{\textup{\bf b}}
\def\c{\textup{\bf c}}
\def\m{\textup{\bf m}}
\def\n{\textup{\bf n}}
\def\p{\textup{\bf p}}
\def\bah#1{\overline#1}
\newcommand{\ph}{\varphi}
\newcommand{\reals}{\br}
\newcommand{\ints}{\bz}
\newcommand{\zero}{{\bf0}}
\def\Vert{|}
\newcommand{\dast}{*} 
\newcommand{\Hopf}{\mathop{\rm Hopf}\nolimits}
\newcommand{\vol}{\mathop{\rm vol}\nolimits}
\newcommand{\Har}{\mathop{\rm Har}\nolimits}
\newcommand{\Gr}{\mathop{\rm Gr}\nolimits}
\newcommand{\BS}{\mathop{\rm BS}\nolimits}
\def\cald{\mathcal D}
\def\calc{\mathcal C}
\def\call{\mathcal L}
\def\calm{\mathcal M}
\def\caln{\mathcal N}
\def\calp{\mathcal P}
\def\st{\ | \ }
\def\sgn{\textup{sign}}
\def\deg{\textup{deg}}
\def\dt{\,\raisebox{-.6ex}{\huge$\hskip-.02in\cdot$\hskip.005in}}
\def\dte{\,\raisebox{-.3ex}{\Large$\hskip-.02in\cdot$\hskip.005in}}
\def\dth{\raisebox{-.3ex}{\huge$\cdot$}}
\def\dtw{\,\dt\,}
\def\dtew{\,\dte\,}
\def\what#1{\widehat{#1}}
\def\tor{T^3}
\def\torh{T^2}
\def\man{M}
\def\norm#1{\lceil\!\!\!\hskip.01in \lfloor #1 \rceil\!\!\!\hskip.01in\rfloor}
\def\so{\mathrm{SO}}
\def\con#1#2{\mathrm{Conf}_{#1}#2}
\def\conf{\con{3}{S^3}}
\def\confn{\con{n}{\man}}
\def\conftr{\con{2}{\br^3}}
\def\confts{\con{2}{S^3}}
\def\conftrn{\con{2}{\br^n}}
\def\conftsn{\con{2}{S^n}}
\def\gras{\mathrm{G}_2\br^4}
\def\stiefel{\mathrm{V}_2\br^4}
\def\rot{\mathrm{rot}}
\def\zbar{\bar z}
\def\pr{\mathrm{pr}}
\def\re{\mathrm{Re\,}}
\def\Im{\mathrm{Im\,}} 
\def\lk{\mathrm{Lk}}
\def\pt{\mathrm{*}}
\def\im{\mathrm{im\,}} 
\def\lcm{\textup{lcm}}
\def\pb#1#2{\langle#1,#2\rangle} 
\begin{document}



\title{Pontryagin invariants and integral formulas \\ for Milnor's triple linking number}

\author{Dennis DeTurck \and Herman Gluck \and Rafal Komendarczyk \and \\ Paul Melvin \and Clayton Shonkwiler \and David Shea Vela-Vick}

\begin{abstract}

To each three-component link in the $3$-sphere, we associate a geometrically natural characteristic map from the $3$-torus to the $2$-sphere, and show that the pairwise linking numbers and Milnor triple linking number that classify the link up to link homotopy correspond to the Pontryagin invariants that classify its characteristic map up to homotopy.   This can be viewed as a natural extension of the familiar fact that the linking number of a two-component link in $3$-space is the degree of its associated Gauss map from the $2$-torus to the $2$-sphere.  

When the pairwise linking numbers are all zero, we give an integral formula for the triple linking number analogous to the Gauss integral for the pairwise linking numbers.  The integrand in this formula is geometrically natural in the sense that it is invariant under orientation-preserving rigid motions of the $3$-sphere, while the integral itself can be viewed as the helicity of a related vector field on the $3$-torus.

\end{abstract}

\maketitle

\parskip 4pt
\addtolength{\baselineskip}{.2pt}


\vskip-.2in
\vskip-.3in

\section{Introduction}
\label{sec:intro}

\begin{wrapfigure}[10]{l}{35mm}
  \begin{center}
  \includegraphics[height=100pt]{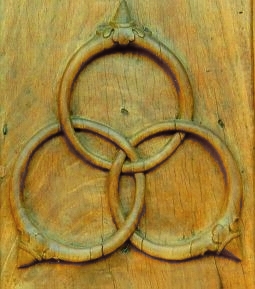}
  \put(-88,-12){\small\bf Borromean Rings}
  \end{center}
\label{fig:borromean}
\end{wrapfigure}
Three-component links in the $3$-sphere $S^3$ were classified up to {\itb link homotopy} -- a deformation during which each component may cross itself but distinct components must remain disjoint -- by John Milnor in his senior thesis, published in 1954.   A complete set of invariants is given by the pairwise linking numbers $p$, $q$ and $r$ of the components, and by the residue class $\mu$ of one further integer modulo the greatest common divisor of $p$, $q$ and $r$, the {\itb triple linking number} of the title.   For example, the Borromean rings shown here have $p=q=r=0$ and $\mu=\pm1$, where the sign depends on the ordering and orientation of the components.

\vskip .1in

To each such link $L$ we will associate a geometrically natural characteristic map $g_L$ from the $3$-torus $\tor = S^1\times S^1\times S^1$ to the $2$-sphere $S^2$ in such a way that link homotopies of $L$ become homotopies of $g_L$.  The definition of $g_L$ will be given below.   The assignment $L\mapsto g_L$ then defines a function 
$$
g:\call_3 \longrightarrow [T^3,S^2]
$$
from the set $\call_3$ of link homotopy classes of three-component links in $S^3$ to the set $[T^3,S^2]$ of homotopy classes of maps $T^3\to S^2$, and it will be seen below that $g$ is injective.  

Maps from $\tor$ to $S^2$  were classified up to homotopy by Lev Pontryagin in 1941.   A complete set of invariants is given by the degrees $p$, $q$ and $r$ of the restrictions to the $2$-dimensional coordinate subtori, and by the residue class $\nu$ of one further integer modulo {\it twice} the greatest common divisor of $p$, $q$ and $r$, the {\itb Pontryagin invariant} of the map.  

This invariant is an analogue of the Hopf invariant for maps from $S^3$ to $S^2$, and is an absolute version of the relative invariant originally defined by Pontryagin for {\it pairs} of maps from a $3$-complex to $S^2$ that agree on the $2$-skeleton of the domain.  

Our first main result, \thmref{A} below, equates Milnor's and Pontryagin's invariants $p$, $q$ and $r$ for $L$ and $g_L$, and asserts that 
$$
2\mu(L) \ = \ \nu(g_L).
$$
As a consequence, the function $g:\call_3\  \longrightarrow \ [T^3,S^2]$ above is one-to-one, with image the set of maps of even $\nu$-invariant.   

In the special case when $p=q=r=0$, we derive an explicit and geometrically natural integral formula for the triple linking number, reminiscent of Gauss' classical integral formula for the pairwise linking number.  This formula and variations of it are presented in \thmref{B} below.

In the rest of this introduction, we provide the definition of the characteristic map, give careful statements of Theorems \ref{thm:A} and \ref{thm:B}, and then discuss some motivation for our work coming from the homotopy theory of configuration spaces and from fluid dynamics and plasma physics.

\part{The characteristic map of a three-component link in the $3$-sphere}

Let $x$, $y$ and $z$ be three distinct points on the unit $3$-sphere $S^3$ in $\br^4$.  They cannot lie on a straight line in $\br^4$,  so must span a $2$-plane there.  Translate this plane to pass through the origin, and then orient it so that the vectors $x-z$ and $y-z$ form a positive basis.  The result is an element $G(x,y,z)$ of the Grassmann manifold $\gras$ of all oriented 2-planes through the origin in $4$-space.  This procedure defines the {\itb Grassmann map} 
$$
G: \conf \ \longrightarrow \ \gras
$$
pictured in \figref{gras}, where 
$$
\conf\ \subset \ S^3 \times S^3 \times S^3
$$ 
is the configuration space of ordered triples of distinct points in $S^3$.  The map $G$ is equivariant with respect to the diagonal $SO(4)$ action on $S^3 \times S^3 \times S^3$ and the usual $SO(4)$ action on $\gras$.

\begin{figure}[h!]
\includegraphics[height=120pt]{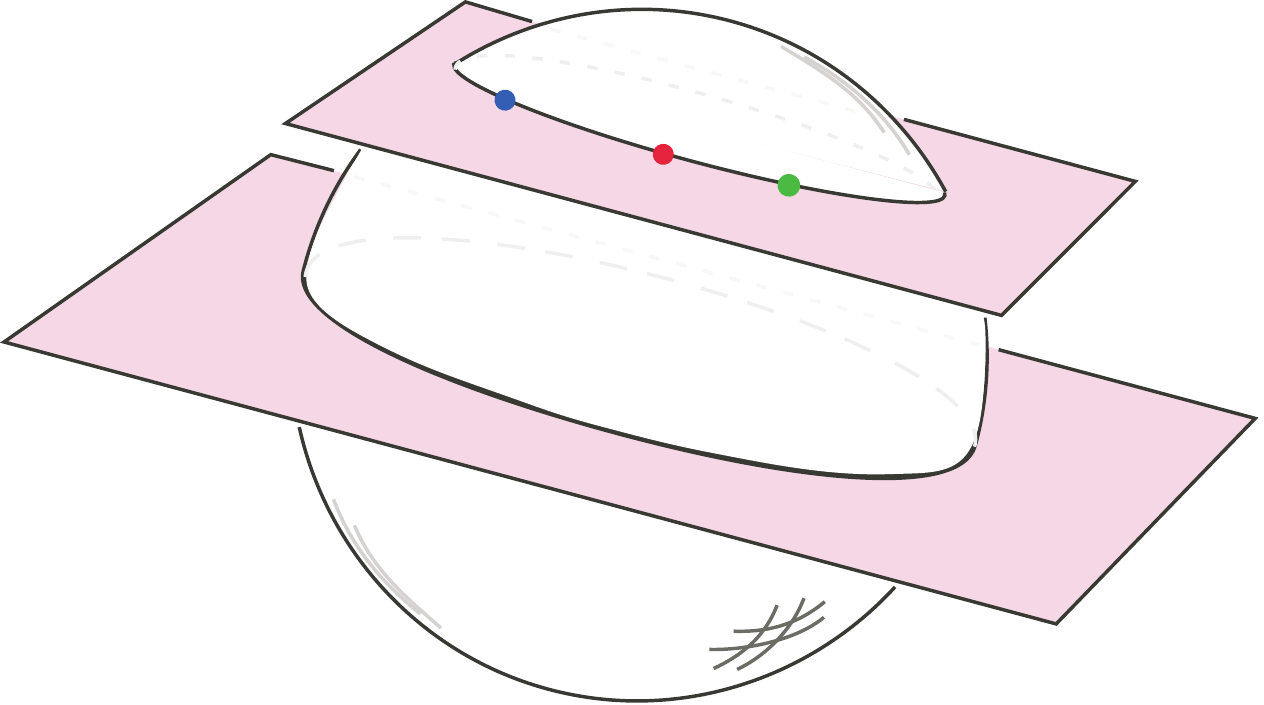}
\put(-70,27){\small $G(x,y,z)$}
\put(-135,95){\small $x$}
\put(-107,86){\small $y$}
\put(-85,80){\small $z$}
\put(-125,15){$S^3$}
\put(-190,23){$\br^4$}
\caption{The Grassmann map $(x,y,z)\mapsto G(x,y,z)$}
\label{fig:gras}
\end{figure}

The Grassmann manifold $\gras$ is diffeomorphic to the product $S^2\times S^2$ of two 2-spheres, as explained in \secref{characteristic} below.  Let $\pi_+$ and $ \pi_- : \gras\to S^2$ denote the projections to the two factors.  One of these will be used in the definition of the characteristic map, but the choice of which one will be seen to be immaterial.  

Now let $L$ be a link in $S^3$ with three parametrized components
$$
X \ = \ \{x(s) \st s \in S^1\} \ , \ Y \ = \ \{y(t) \st t \in S^1 \} \ \ \text{and}\ \  Z \ = \ \{z(u) \st u \in S^1 \}
$$
as indicated schematically in \figref{graslink}.  Here and throughout, we view the parametrizing circle $S^1$ as the quotient $\br/2\pi\bz$, and assume implicitly that the parametrizing functions $x=x(s)$, $y=y(t)$ and $z=z(u)$ are smooth with nowhere vanishing derivatives. 

\begin{figure}[h!]
\includegraphics[height=120pt]{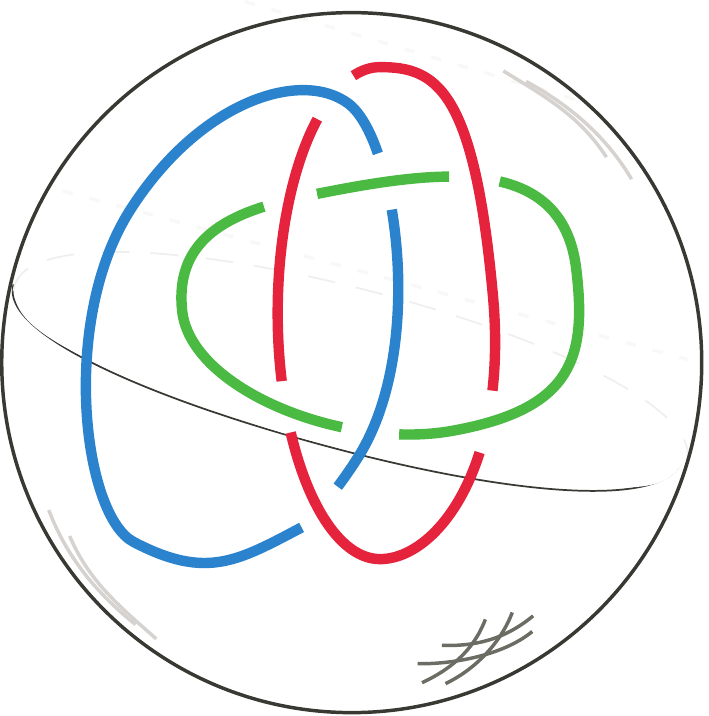}
\put(-112,75){\small $X$}
\put(-40,25){\small $Y$}
\put(-19,60){\small $Z$}
\put(-75,8){$S^3$}
\caption{The link $L$ in $S^3$}
\label{fig:graslink}
\end{figure}

We define the {\itb characteristic map} $g_L:T^3 \to S^2$ in terms of the Grassmann map $G:\conf\to\gras$ and the projection $\pi_+:\gras\to S^2$ by the formula
$$
g_L(s,t,u) \ = \ \pi_+(G(x(s),y(t), z(u)).
$$
In other words $g_L$ is the composition $\pi_+Ge_L$ where $e_L:\tor\to\gras$ is the embedding that parametrizes the link, given by $e_L(s,t,u) \ = \ (x(s),y(t),z(u))$.   We regard $g_L$ as a natural generalization of the Gauss map $T^2\to S^2$ associated with a two-component link in $\br^3$. 

It is clear that the homotopy class of $g_L$ is unchanged under reparametrization of $L$, or more generally under any link homotopy of $L$.  Furthermore, $g_L$  is ``symmetric" in that  it transforms under any permutation of the components of $L$ by precomposing with the corresponding permutation automorphism of $\tor$ multiplied by the sign of the permutation.

\part{Statement of results}

The first of our two main results gives an explicit correspondence between the Milnor link homotopy invariants of a three-component ordered, oriented link in the $3$-sphere and the Pontryagin homotopy invariants of its characteristic map. 

\begin{theorem}\label{thm:A}
\textbf{\boldmath Let $L$ be a $3$-component link in $S^3$.  Then the pairwise linking numbers $p$, $q$ and $r$ of $L$ are equal to the degrees of its characteristic map $g_L:T^3\to S^2$ on the two-dimensional coordinate subtori of $\tor$, while twice Milnor's $\mu$-invariant for $L$ is equal to \mbox{Pontryagin's} $\nu$-invariant for $g_L$ modulo $2\gcd(p,q,r)$.}
\end{theorem}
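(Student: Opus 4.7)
The plan is to split the theorem into two pieces: first, computing the degrees of $g_L$ on the three $2$-dimensional coordinate subtori and identifying them with the Milnor pairwise linking numbers $p$, $q$ and $r$; second, identifying twice Milnor's triple linking number $\mu(L)$ with Pontryagin's invariant $\nu(g_L)$ modulo $2\gcd(p,q,r)$.

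For the first part, I would restrict $g_L$ to a coordinate subtorus, say $T^2 \times \{u_0\} \subset T^3$, obtaining $(s,t)\mapsto \pi_+(G(x(s),y(t),z(u_0)))$.  Using the $\so(4)$-equivariance of $G$ and the product decomposition $\gras\cong S^2\times S^2$, I would show that this restricted map is homotopic (through an explicit one-parameter family obtained by moving $z(u_0)$ within $S^3\smallsetminus(X\cup Y)$) to the Gauss-type map $T^2\to S^2$ associated to the two-component sublink $X\cup Y$.  Since the degree of the classical Gauss map of a two-component link in $S^3$ is its linking number, this identifies the degree on $T^2\times\{u_0\}$ with $\lk(X,Y)=p$.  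The ``symmetry'' of $g_L$ under permutations of the three components then gives the analogous statements for $q$ and $r$, up to sign, which are absorbed into the choice of orientation of the Grassmann 2-plane.

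For the second part, I would pass to the alternative asymmetric form $h_L$ of the characteristic map, which is homotopic to $g_L$ and for which $\nu$ can be read off from a transverse preimage $h_L^{-1}(v)$ of a regular value $v\in S^2$.  By Pontryagin's construction, $\nu(h_L)$ is the self-linking of this framed $1$-manifold in $T^3$.  Reducing to a ``generic'' link with an explicit diagram, the preimage $h_L^{-1}(v)$ decomposes into circles supported near pairwise intersection loci and near triple-intersection configurations; the contribution of the first type recovers (via the first part) the pairwise linking data and produces the indeterminacy, while the contribution of the triple-intersection circles computes the analogue of Milnor's combinatorial definition of $\mu(L)$ in terms of a Seifert surface or word in the Magnus expansion.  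I would set this up so that each triple-intersection contributes $+2$ to $\nu$ while contributing $+1$ to $\mu$, thereby accounting for the factor of $2$.

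Since $\mu(L) \bmod \gcd(p,q,r)$ and $\nu(g_L)\bmod 2\gcd(p,q,r)$ are both link-homotopy invariants, it suffices to verify the identity on a convenient set of representatives for link-homotopy classes with given $(p,q,r)$.  A natural choice is a family built by attaching Borromean-type clasps to a base link realizing the prescribed pairwise linking numbers; on such a family $\mu$ increases by $\pm 1$ per clasp by Milnor's formula, and the generic position analysis above should show $\nu$ increases correspondingly by $\pm 2$.  The main obstacle will be bookkeeping the factor of $2$ and verifying that the Pontryagin indeterminacy is exactly $2\gcd(p,q,r)$ rather than $\gcd(p,q,r)$: this amounts to showing that homotopies of $g_L$ which change the framing of the 2-skeleton contribute even changes to $\nu$, a fact that should follow from the structure of $\pi_3(S^2)\cong\bz$ and the double cover $\stiefel\to\gras$ underlying the projection $\pi_+$.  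Degenerate cases in which some of $p$, $q$, $r$ vanish (where $\gcd$ must be interpreted with the convention $\gcd(n,0)=n$) will require a separate but parallel argument.
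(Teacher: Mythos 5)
Your overall architecture matches the paper's: the first statement is proved by restricting to a coordinate subtorus and recognizing a Gauss map (the paper fixes $z(0)=-1$ and identifies $h_L|_{S^1\times S^1\times 0}$ with the Gauss map of the stereographic image of $X\cup Y$, then invokes the symmetry of $g_L$), and the second statement is proved by exhibiting base links realizing each $(p,q,r)$ and then showing that a Borromean-type local move raises $\mu$ by $1$ and $\nu$ by $2$. That skeleton is sound, and your observation that Milnor's classification lets you verify the identity on a one-parameter family of representatives is legitimate.

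However, the two computations that constitute the actual content of the second statement are asserted rather than carried out, and the picture you propose for carrying them out has problems. First, $\nu$ is a priori only a \emph{relative} invariant of pairs of maps with the same degrees $(p,q,r)$; making it absolute requires choosing a base map $f_{pqr}$ in each class and then proving that the characteristic map of your base link is homotopic to that base map, i.e.\ $\nu(h_{L_{pqr}})=0$. Your phrase ``self-linking of this framed $1$-manifold'' is not defined when $(p,q,r)\neq(0,0,0)$, since the Pontryagin link is then not null-homologous; the paper replaces it with a diagrammatic calculus ($\nu(f)=n+pq+\sum d_ir_i$ for a crossingless toral diagram, \propref{pontryagin}) and computes that the Pontryagin link of $h_{L_{pqr}}$ is a $(p,q)$ torus link in $T^2$ with total framing $-(pq+r)$, giving $\nu=2(pq+r)\equiv 0$. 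Nothing in your sketch produces this. Second, the decomposition of $h_L^{-1}(v)$ into ``circles near pairwise intersection loci and near triple-intersection configurations'' does not describe how the preimage actually sits in $T^3$: for a link in generic position relative to an open book with binding $Z$, the preimage is the graph of a function over the locus of $(s,t)$ for which $x(s)$ and $y(t)$ lie on a common page, and the increment $\Delta\nu=\pm2$ for a clasp comes from tracking how the meridional winding and framing of these graph components change under a double crossing change ($\Delta\nu=2\lk(P,Z)$ for the smoothed component $P$), which requires a genuine monotonicity lemma about how page vectors rotate. This is the heart of the proof and is missing. Finally, two smaller points: $\stiefel\to\gras$ is a circle bundle, not a double cover, and the fact that the indeterminacy of $\nu$ is $2\gcd(p,q,r)$ is an input from Pontryagin's classification rather than something the proof of \thmref{A} must establish.
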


\noindent {\bf Conventions.}  (1) In this paper, links in $S^3$ are always {\it ordered} (the components are taken in a specific order) and {\it oriented} (each component is oriented).  

\noindent (2) The two-dimensional coordinate subtori of $T^3$ are oriented to have positive intersection with the remaining circle factors.

\vskip.05in

The key idea of the proof centers on the ``delta move", a higher order variant of a crossing change.  Applied to a three-component link $L$ in the $3$-sphere, we show easily that this move increases Milnor's $\mu$-invariant by $1$, and then the bulk of the proof is devoted to showing that it increases Pontryagin's $\nu$-invariant by $2$.


The background material for the proof is contained in Sections \ref{sec:milnor} and \ref{sec:pontryagin} of the paper.  In particular, in \secref{milnor} we discuss how to compute Milnor's $\mu$-invariant, describe the delta move, and prove that it increases the $\mu$-invariant by $1$.  In \secref{pontryagin}, we discuss Pontryagin's homotopy classification of maps from a 3-manifold  to the 2-sphere in terms of framed bordism of framed links, define the ``Pontryagin link'' of such a map to be the inverse image of any regular value, and then show how to convert the relative $\nu$-invariant to an absolute one when the manifold is the $3$-torus.  This section concludes with a simple procedure for computing the absolute $\nu$-invariant of any map of the 3-torus to the 2-sphere from a diagram of its Pontryagin link. 

The proof of \thmref{A} will occupy us in Sections \ref{sec:characteristic}--\ref{sec:A}, and is organized as follows.  

In \secref{characteristic}, we derive an explicit formula for the characteristic map $g_L$ needed for the proof of \thmref{B}, but not for that of \thmref{A}.  We then describe an alternative asymmetric form $h_L$ of the characteristic map that is more convenient for the proof of \thmref{A}, and that, just as for $g_L$, depends {\it a priori} on the choice of a projection to the $2$-sphere.  We show that the two characteristic maps are homotopic to one another, and that, up to homotopy, neither in fact depends on the choice of projection.   We note a close relation between $h_L$ and the classical Gauss maps of two-component links in $3$-space, and use this, together with the symmetry of $g_L$, to prove the first statement in \thmref{A}.  

In \secref{Aprep}, we set up for the proof of the rest of \thmref{A} by describing the standard open-book structure on $S^3$ with disk pages, use a link homotopy to move a given link $L$ into ``generic position'' with respect to it, and then show how to explicitly visualize the Pontryagin link of the corresponding asymmetric characteristic map $h_L$.  Finally, we use the results of \secref{pontryagin} to develop a method for computing $\nu(h_L) = \nu(g_L)$. 

In \secref{A}, we complete the proof of \thmref{A} by induction, using the methods developed in Sections \ref{sec:milnor} and \ref{sec:pontryagin} to first confirm the ``base case'', and then using the methods of \secref{Aprep} to carry out the inductive step, showing that the delta move increases Pontryagin's $\nu$-invariant by 2.

\secref{A'} contains a sketch of a more algebraic proof of this theorem using the group of link homotopy classes of three-component string links and the fundamental groups of spaces of maps of the 2-torus to the $2$-sphere. More details for this alternative proof can be found in our announcement \cite{DGKMSV}, which is otherwise an abridged version of some of the material in the current paper.


In Sections~\ref{sec:B} and~\ref{sec:B'}, we investigate the special case when the pairwise linking numbers $p$, $q$ and $r$ of $L$ are all zero, and so the $\mu$ and $\nu$-invariants are ordinary integers.  In this case we will use J.\,H.\,C.~Whitehead's integral formula for the Hopf invariant, adapted to maps of the $3\text{-torus}$ to the $2$-sphere, together with a formula for the fundamental solution of the scalar Laplacian on the $3$-torus as a Fourier series in three variables, to provide an explicit integral formula for $\nu(g_L)$, and hence for $\mu(L)$ in light of \thmref{A}.  See \thmref{B}, stated below.
 
This formula will be presented in three versions: first as an integral involving differential forms on the {$3\text{-torus}$, second as the same integral expressed in terms of vector fields, and finally as an infinite sum involving Fourier coefficients. 

To state these formulas, we need some definitions.   Let $\omega$ denote the Euclidean area $2$-form on $S^2$, normalized to have total area $1$.  Then $\omega$ pulls back under the characteristic map $g_L$ to a closed $2$-form $\omega_L$ on $\tor$, which can be converted to a divergence-free vector field $\vec_L$ on $\tor$ via the usual formula 
$$
\omega_L(\vc{\textup{\bf a}}\, ,\vc{\textup{\bf b}}) \ = \ (\vc{\textup{\bf a}}\times\vc{\textup{\bf b}})\dtw\vec_L.
$$  
We call  $\omega_L$  the {\itb characteristic $2$-form} of  $L$,  and  $\vec_L$  its {\itb characteristic vector field}.  When $p$, $q$ and $r$ are all zero, $\omega_L$ is exact and $\vec_L$ is in the image of curl.  In \secref{B} we give explicit formulas for  $\omega_L$  and  $\vec_L$,  and in \secref{B'}  for the fundamental solution $\varphi$ of the scalar Laplacian on $\tor$, which appear in the integral formulas for $\mu(L)$.

For the third version of our formula for the Milnor invariant, we need to express the characteristic $2$-form and vector field in terms of Fourier series on the $3$-torus.  To that end, view $T^3$ as the quotient $(\br/2\pi\bz)^3$ and write $\x=(s,t,u)\in\br^3$ for a general point there.  Using the complex form of Fourier series, express
$$
\omega_L \ = \ \sum_{\n\in\bz^3}\left(c^s_{\n} \, dt\wedge du+c^t_{\n} \, du\wedge ds+c^u_{\n} \, ds\wedge dt\right)e^{i\n\dte\x}.
$$
Using vector notation \,$\c_{\n} = (c^s_{\n},c^t_{\n},c^u_{\n})$\ , \ $d\x \ = \ (ds, dt, du)$\ , \ $\partial_\x = (\partial/\partial s,\partial/\partial t, \partial/\partial u)$ \,and\, $\star d\x \ = \ (dt\wedge du, du\wedge ds, ds\wedge dt)$\,, the formulas for $\omega_L$ and $\vec_L$ become
$$
\omega_L\ =\ \sum_{\n\,\in\,\bz^3}\c_{\n}\, e^{i\n\dte\x}\dtw\star\!d\x \qquad\text{and }\qquad \vec_L \ = \ \sum_{\n\,\in\,\bz^3}\c_{\n}\, e^{i\n\dte\x}\dtw \partial_\x.
$$
As a consequence of the assumption that the pairwise linking numbers of $L$ vanish, the coefficient $\c_{\zero}$ is the zero vector, which will imply that $\omega_L$ is exact, or equivalently that $\vec_L$ is in the image of curl.

\begin{theorem}\label{thm:B}
\textbf{\boldmath If the pairwise linking numbers of a three-component link $L$ in $S^3$ are all zero, then Milnor's $\mu$-invariant of $L$ is given by each of the following equivalent formulas
$$
\begin{aligned}
\mu(L) \ &= \  1/2\, \int_{T^3} \delta(\varphi*\omega_L) \wedge \omega_L \ & \qquad (1) \\
\ &= \ 1/2\, \int_{\tor\times\tor} \vec_L(\x) \times \vec_L(\y) \dtew \nabla_{\!\y}\, \varphi(\x - \y)\ d\x\, d\y \ & \qquad (2) \\
\ & = \  8\pi^3 \sum_{\n\ne\zero} \textstyle \a_\n\times \b_\n\dtew \n/\|\n\|^2\ . & \qquad (3)
\end{aligned}
$$
where $\varphi$ is the fundamental solution of the scalar Laplacian on the $3$-torus, $\omega_L$ and $\vec_L$ are the characteristic form and vector field of $L$, and $\a_\n$ and $\b_\n$ are the real and imaginary parts of the Fourier coefficients $\c_\n$ of $\omega_L$ and $\vec_L$.}
\end{theorem}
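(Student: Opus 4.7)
\medskip

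\noindent\textbf{Proof proposal.}
By \thmref{A}, the hypothesis $p=q=r=0$ makes $2\gcd(p,q,r)=0$, so $\nu(g_L)$ is an honest integer and $2\mu(L)=\nu(g_L)$ without any modular ambiguity. The plan is therefore to compute $\nu(g_L)$ as an integral and then equate it with $2\mu(L)$. The starting point is J.\,H.\,C.~Whitehead's integral formula, adapted to the $3$-torus: since each pairwise degree of $g_L$ vanishes, the pullback $\omega_L=g_L^{\,*}\omega$ of the normalized area form on $S^2$ is not only closed but \emph{exact} on $T^3$, so any primitive $\alpha$ with $d\alpha=\omega_L$ satisfies
$$
\nu(g_L)\ =\ \int_{T^3}\alpha\wedge\omega_L.
$$
This reduces everything to a well-chosen construction of $\alpha$ and then repackaging the integral in three equivalent ways.

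For formula~(1), I would construct the primitive $\alpha$ canonically using Hodge theory on $T^3$. Since $\omega_L$ is exact, it has no harmonic component, so the Hodge decomposition gives $\omega_L=d\delta N\omega_L$ where $N$ is the Green's operator for the Hodge Laplacian. Because the Laplacian acts on differential forms coefficient-by-coefficient on the flat torus, $N$ is realized by convolution with the fundamental solution $\varphi$ of the \emph{scalar} Laplacian, and one may take $\alpha=\delta(\varphi*\omega_L)$. Substituting into Whitehead's formula and dividing by $2$ yields formula~(1). The only subtlety is that the convolution is defined because $\omega_L$, being exact, has zero mean (its Fourier coefficient $\c_{\zero}$ vanishes), so the would-be obstruction from the kernel of the Laplacian is absent.

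For formula~(2), I would translate everything into vector calculus. Under the musical isomorphisms on $T^3$, the $2$-form $\omega_L$ corresponds to $\vec_L$ and the codifferential $\delta$ corresponds (up to signs) to curl. The primitive $\alpha=\delta(\varphi*\omega_L)$ therefore corresponds to the Biot--Savart-like vector potential
$$
\vec A(\x)\ =\ \int_{T^3}\nabla_{\!\y}\,\varphi(\x-\y)\times\vec_L(\y)\,d\y,
$$
and the Whitehead integral $\int\alpha\wedge\omega_L$ becomes the helicity $\int\vec A\cdot\vec_L\,d\x$. One scalar triple product rearrangement rewrites the double integral as $\int_{T^3\times T^3}\vec_L(\x)\times\vec_L(\y)\cdot\nabla_{\!\y}\varphi(\x-\y)\,d\x\,d\y$, and the factor of $1/2$ carries through unchanged.

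For formula~(3), I would expand both factors in Fourier series on $T^3=(\br/2\pi\bz)^3$. Since $\Delta e^{i\n\cdot\x}=-|\n|^2 e^{i\n\cdot\x}$, the Green's operator simply divides the nonzero modes by $-|\n|^2$. Substituting $\omega_L=\sum\c_\n e^{i\n\cdot\x}\cdot\star d\x$ into formula~(2), performing the $\y$-integration first (which produces a Kronecker delta by orthogonality of characters on $T^3$), and then the $\x$-integration, collapses the double Fourier sum to a single sum. The reality of $\omega_L$ forces $\c_{-\n}=\overline{\c_\n}$, so pairing the $\pm\n$ modes replaces the complex coefficients by the real combinations $\a_\n\times\b_\n\cdot\n/|\n|^2$; the volume $(2\pi)^3$ of $T^3$ and a factor of $2$ from the pairing produce the prefactor $8\pi^3$.

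The main obstacle I anticipate is bookkeeping rather than conceptual: the Hodge/Biot--Savart identification on $T^3$ requires keeping careful track of sign conventions for $\delta$, $\star$, and curl, and the Fourier computation in (3) requires honest attention to the normalization of $\varphi$, the volume factor $(2\pi)^3$, and the pairing of $\pm\n$ modes so that the constant $8\pi^3$ emerges exactly. These computations rely on the explicit formula for $\varphi$ derived in \secref{B'} and on the formulas for $\omega_L$ and $\vec_L$ from \secref{B}, but no new topological input beyond \thmref{A} is needed.
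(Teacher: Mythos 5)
Your proposal follows essentially the same route as the paper's proof: Theorem A reduces $\mu(L)$ to $\tfrac12\nu(g_L)$, Whitehead's formula gives $\nu(g_L)=\int_{T^3}\alpha\wedge\omega_L$ for the canonical primitive $\alpha=\delta(\varphi*\omega_L)$ (justified, exactly as you say, by the vanishing of $\c_{\zero}$ and the Green's-operator/Hodge argument), and formulas (2) and (3) follow by the same vector-calculus and Fourier manipulations, including the pairing $\c_{-\n}=\overline{\c_\n}$ that produces $8\pi^3$. The one slip is in your displayed vector potential: $\nabla\times(\varphi*\vec_L)(\x)=\int_{T^3}\nabla_{\!\x}\,\varphi(\x-\y)\times\vec_L(\y)\,d\y$, not $\int_{T^3}\nabla_{\!\y}\,\varphi(\x-\y)\times\vec_L(\y)\,d\y$ --- these differ by a sign, which would negate formula (2) if carried through uncorrected.
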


\noindent {\bf Explanation of the notation.} In (1), $\delta$ is the exterior co-derivative and $\ph*\omega_L$ is the convolution of $\ph$ with $\omega_L$, discussed in detail in Sections~\ref{sec:B} and~\ref{sec:B'}. 

In (2), the difference $\x-\y$ is taken in the abelian group structure on $\tor$, the expression $\nabla_{\!\y}\, \varphi(\x - \y)$ indicates the gradient with respect to $\y$ while $\x$ is held fixed, and $d\x$ and $d\y$ are volume elements on $\tor$.  This formula is just the vector field version of (1) in which the integral hidden in the convolution is expressed openly; we will see below that it represents the ``helicity" of the vector field $\vec_L$ on $\tor$.  

Observe that the integrands in (1) and (2) are invariant under the group $SO(4)$ of orientation-preserving rigid motions of $S^3$, attesting to the naturality of the formulas.

\part{Background and Motivation} 

{\bf Configuration spaces.} \ To study the linking of simple closed curves in a $3$-manifold $\man$ from the perspective of homotopy theory, it is convenient to ignore the knotting of individual components and focus on the relation of {\itb link homotopy}.  For some background on this notion, see Milnor \cite{Milnor54, Milnor57} and, for example, Massey~\cite{Massey68}, Casson~\cite{Casson75}, Turaev~\cite{Turaev76}, Porter~\cite{Porter}, Fenn~\cite{Fenn83}, Orr~\cite{Orr}, Cochran~\cite{Cochran}, and Habegger and Lin~\cite{HabeggerLin}.

Configuration spaces come into the picture as follows.  Let $L$ be an ordered, oriented link in $\man$ with $n$ components $X,Y,\dots$ parametrized by $x=x(s),y=y(t), \dots$ for $s,t,\dots$ in $S^1$.  Then consider the {\itb evaluation map} 
$$
e_L : T^n \ \longrightarrow \ \confn\ , \qquad (s, t, \dots)\longmapsto (x, y, \dots)
$$
from the $n$-torus $T^n = S^1\times\cdots\times S^1$ to the configuration space $\confn$ of ordered $n\text{-tuples}$ of distinct points in $\man$.  Since link homotopies of $L$ become homotopies of $e_L$, the assignment $L\mapsto e_L$ induces a map from the set $\call_n(\man)$ of link homotopy classes of $n$-component links in $\man$ to the set $[T^n,\confn]$ of homotopy classes of maps from $T^n$ to $\confn$,
$$
e : \call_n(\man) \ \longrightarrow \ [T^n,\confn].
$$
We can think of the map $e$ as defining a representation from the world of link homotopy to the world of homotopy, and the basic question  is whether or not this representation is faithful, that is, one-to-one.

For two-component links in $\br^3$, the representation $e$ is faithful.  The set $\call_2(\br^3)$ is in one-to-one correspondence with the integers via the classical linking number, while the set $[T^2,\conftr]$ is also in one-to-one correspondence with the integers via the Brouwer degree, since $\conftr$ deformation retracts to $S^2$.  The correspondence $e$ is bijective since the linking number of a two-component link equals the degree of its ``Gauss map"
$$
T^2 \ \longrightarrow \ S^2\quad , \quad (s,t)\,\longmapsto\, \frac{y-x}{\|y-x\|}.
$$

\break
The profit is the famous integral formula of Gauss~\cite{Gauss},
$$
\begin{aligned}
\lk(X,Y) \ &= \ \frac1{4\pi} \,\int_{T^2} \frac{dx}{ds}  \times \frac{dy}{dt} \dtw  \frac{x-y}{\,|x-y|^3} \ \,ds \, dt \\
\ &= \ \int_{T^2} \frac{dx}{ds}  \times   \frac{dy}{dt}  \dtw  \nabla_{\!y}\,\varphi(y\!-\!x) \ ds \, dt,
\end{aligned}
$$
where $\varphi(x) = 1/4\pi\|x\|$ is the fundamental solution of the scalar Laplacian in $\br^3$.  The integrand is natural, in the sense that it is invariant under the group of orientation-preserving rigid motions of $\br^3$, acting on the link.  

By contrast, for two-component links in $S^3$ the representation $e$ is not faithful.  The configuration space $\confts$ has the homotopy type of $S^3$, and hence all maps of $T^2$ to $\confts$ are homotopically trivial.  In particular, the analogue of Gauss's linking integral in $S^3$, with an integrand which is geometrically natural in the above sense, cannot be obtained by this route.   Nevertheless, such an integral formula was found by DeTurck and Gluck \cite{DeTurckGluck} using an alternative route via electrodynamics on the $3$-sphere, and independently by Kuperberg~\cite{Kuperberg} via the calculus of double forms.

For higher-dimensional two-component links, the same dichotomy holds. 
In $\br^n$, Scott \cite{Scott} and Massey and Rolfsen \cite{MasseyRolfsen} showed that link homotopy classes of links whose components are copies of $S^k$ and $S^{n-k-1}$ are in bijective correspondence with homotopy classes of maps from $S^k\times S^{n-k-1}$ to the configuration space $\conftrn\simeq S^{n-1}$, and consequently with $\pi_{n-1}(S^{n-1}) \cong\bz$.  Finding a geometric linking integral is straightforward, as the Gauss linking integral and its proof easily generalize to this setting.  Shifting the scene to $S^n$ does not change the link homotopy story, but all maps from $S^k\times S^{n-k-1}$ to the configuration space $\conftsn\simeq S^n$ are homotopically trivial.  Geometrically natural linking integrals still exist in this situation, as demonstrated by DeTurck and Gluck \cite{DeTurckGluckb} and Shonkwiler and Vela-Vick \cite{SVV}, but finding them requires new techniques.

For $n$-component {\itb homotopy Brunnian} links in $\br^3$ -- meaning links that become trivial up to link homotopy when any single component is removed -- and analogous links in higher dimensions, Koschorke \cite{Koschorke} showed that the representation $e$ is again faithful.  This provided the first proof (up to sign) of our \thmref{A} for the case when the pairwise linking numbers are zero.

The content of the present paper is that, for arbitrary three-component links $L$ in $S^3$, the representation $e$ is faithful, and that we are led thereby to a natural integral for Milnor's triple linking number when the pairwise linking numbers vanish.  The relevant configuration space $\conf$ is easily seen to deformation retract to  $S^3 \times S^2$, where the $S^3$ coordinate records one of the three points in each triple (see \secref{characteristic}).  It follows that the evaluation map $e_L:\tor\to\conf$ is homotopic to a map of $\tor$ into an $S^2$ fiber, and this turns out to be, up to homotopy, our characteristic map $g_L$.

\thmref{A} asserts, among other things, that $L$ and $L'$ are link homotopic if and only if $g_L$ and $g_{L'}$ are homotopic, and hence that
$$
e:\call_3(S^3)\longrightarrow[T^3,\conf]
$$
is faithful.  Furthermore, it was observed above that if $h\in\so(4)$ is an orientation preserving isometry of $S^3$, then $e_{h(L)} = h\cdot e_L$ where $\cdot$ is the diagonal action, and so the integrands in the formulas for Milnor's triple linking number $\mu(L)$ in \thmref{B} are invariant under the action of $\so(4)$.

We emphasize that our Theorems \ref{thm:A} and \ref{thm:B} are set specifically in $S^3$, and that so far we have been unable to find corresponding results in Euclidean space $\br^3$ which are equivariant (for \thmref{A}) and invariant (for \thmref{B}) under the noncompact group of orientation-preserving rigid motions of $\br^3$.

{\bf Fluid mechanics and plasma physics.} \ The {\itb helicity} of a vector field $V$ defined on a bounded domain $\Omega$ in $\br^3$ is given by the formula
$$
\begin{aligned}
\text{Hel}(V) \ &= \ \frac{1}{4\pi}\, \int_{\Omega\times\Omega} \!\!\!\!\!V(x) \times V(y) \dtw \frac{x-y}{\,|x-y|^3} \  \,dx \, dy \\
 \ &= \ \int_{\Omega\times\Omega} \!\!\!\!\!V(x) \times V(y) \dtw \nabla_{\!y}\,\varphi(x\!-\!y) \ dx \, dy.
\end{aligned}
$$
where, as above, $\varphi$ is the fundamental solution of the scalar Laplacian in $\br^3$, and $dx$ and $dy$ are volume elements.

Woltjer~\cite{Woltjer} introduced this notion during his study of the magnetic field in the Crab Nebula, and showed that the helicity of a magnetic field remains constant as the field evolves according to the equations of ideal magnetohydrodynamics, and that it provides a lower bound for the field energy during such evolution.  The term ``helicity'' was coined by Moffatt~\cite{Moffatt}, who also derived the above formula from Woltjer's original expression.  

There is no mistaking the analogy with Gauss's linking integral, and no surprise that helicity is a measure of the extent to which the orbits of $V$ wrap and coil around one another.  Since its introduction, helicity has played an important role in astrophysics and solar physics, and in plasma physics here on earth.  

Looking back at \thmref{B}, we see that the integral in our formula for Milnor's $\mu\textup{-invariant}$ of a three-component link $L$ in the 3-sphere expresses the helicity of the associated vector field $\vec_L$ on the 3-torus.

Our study was motivated by a problem proposed by Arnol$'$d and Khesin~\cite{ArnoldKhesin98} regarding the search for ``higher helicities'' for divergence-free vector fields.  In their own words:

\vspace{.05in}
\begin{indentation}{2.6em}{2.6em}
\small\itb
\noindent The dream is to define such a hierarchy of invariants for generic vector fields such that, whereas all the invariants of order $\leq k$ have zero value for a given field and there exists a nonzero invariant of order $k+1$, this nonzero invariant provides a lower bound for the field energy.
\end{indentation}
\vspace{.05in}

Since the helicity integral above is analogous to the Gauss linking integral, the general hope is that higher helicities will be analogous to higher order linking invariants.  But the anti-symmetry of Milnor's triple linking number appears to be an impediment to naively generalizing it to a second order helicity for vector fields, and suggests that attention be directed to yet higher order link homotopy and concordance invariants.  

The formulation in Theorems~\ref{thm:A} and~\ref{thm:B} has led to partial results that address the case of vector fields on invariant domains such as flux tubes modeled on the Borromean rings; see Komendarczyk~\cite{Komendarczyk, Komendarczyk10}.

\break

{\bf Other integral formulas.} Previous integral formulas for Milnor's triple linking number and attempts to define a higher order helicity can be found in the work of Massey~\cite{Massey58, Massey68}, Monastyrsky and Retakh~\cite{MonastyrskyRetakh}, Berger~\cite{Berger90, Berger91}, Guadagnini, Martellini and Mintchev~\cite{GMM}, Evans and Berger~\cite{EvansBerger92}, Akhmetiev and Ruzmaiken~\cite{Akhmetiev94, Akhmetiev95}, Arnol$'$d and Khesin~\cite{ArnoldKhesin98}, Laurence and Stredulinsky \cite{Laurence}, Leal~\cite{Leal02}, Hornig and Mayer \cite{Hornig}, Rivi\`ere~\cite{Riviere}, Khesin~\cite{Khesin}, Bodecker and Hornig~\cite{Bodecker04}, Auckly and Kapitanski~\cite{AucklyKapitanski}, Akhmetiev~\cite{Akhmetiev05}, and Leal and Pineda~\cite{LealPineda08}.

The principal sources for these formulas are Massey triple products in cohomology, quantum field theory in general, and Chern--Simons theory in particular.  A common feature of these integral formulas is that choices must be made to fix the domain of integration and the value of the integrand.

\smallskip

\noindent{\bf Acknowledgements.}  We are grateful to Fred Cohen and Jim Stasheff for their substantial input and help during the preparation of this paper, and to Toshitake Kohno, whose 2002 paper provided one of the inspirations for this work.  Komendarczyk and Shonkwiler also acknowledge support from DARPA grant \#FA9550-08-1-0386, and Vela-Vick from an NSF postdoctoral fellowship.

The Borromean rings shown on the first page are from a panel in the carved walnut doors of the Church of San Sigismondo in Cremona, Italy.  The photograph is courtesy of Peter Cromwell.



\vskip-.15in
\vskip-.15in

\section{The Milnor $\mu$-invariant} 
\label{sec:milnor}

Let $L$ be a three-component link in either $\br^3$ or $S^3$, with oriented components $X$, $Y$ and $Z$, and pairwise linking numbers $p$, $q$ and $r$.  Milnor's original definition of the triple linking number $\mu(L)$, typically denoted $\bar\mu_{123}(L)$, was algebraic.  The formulation in his PhD thesis \cite{Milnor57} is expressed in terms of the lower central series of the link group $G$, the fundamental group of the complement of $L$, as follows.

Choose based meridians for the link components $X$, $Y$ and $Z$, and let $x$, $y$ and $z$ denote the corresponding elements of $G$.  In general, these three elements do not generate $G$, but they do generate the quotient of $G$ by the third term $[[G,G],G]$ in its lower central series.  In this quotient group, the longitude of $Z$ can be written as a word $w$ in $x$, $y$ and $z$ and their inverses.  Assign an integer $m_{xy}(w)$ to this word that counts with signs the number of times that $x$ appears before $y$ in $w$, allowing intervening letters.  More precisely, each appearance of $x\cdotsclose y$ or $x^{-1}\cdotsclose y^{-1}$ contributes $+1$ to $m_{xy}(w)$, while $x\cdotsclose y^{-1}$ or $x^{-1}\cdotsclose y$ contributes $-1$.  Then $\mu(L)$ is the element of $\bz_{\gcd(p,q,r)}$ defined by
$$
\mu(L) \ = \ m_{xy}(w)\ \ \textup{mod}\gcd(p,q,r).
$$

There is a geometric reformulation of this definition, found by Mellor and Melvin \cite{MellorMelvin}, that is more convenient for our purposes:  choose Seifert surfaces $F_X$, $F_Y$ and $F_Z$ for the components of $L$, and move these into general position.   Starting at any point on $X$, record its intersection with the Seifert surfaces for $Y$ and $Z$ by a word $w_X$ in $y$ and $z$.  For example a $y$ or $y^{-1}$ in $w_X$ indicates a positive or negative intersection point of $X$ with $F_Y$.  Set $m_X = m_{yz}(w_X)$, where the right hand side is computed as in the last paragraph, and similarly set $m_Y = m_{zx}(w_Y)$ and $m_Z = m_{xy}(w_Z)$.  Finally, let $t$ be the signed count of the number of triple points of intersection of the three Seifert surfaces.  Then 
$$
\mu(L) \ =  \ m_X+m_Y+m_Z - t \ \ \textup{mod}\gcd(p,q,r).
$$
It follows from this formula that $\mu(L)$ is invariant under even permutations of the components of $L$, but changes sign under odd permutations.  This is a well known property of Milnor's triple linking number.

We give two sample calculations of the triple linking number, using this geometric formulation, that will feature in our inductive proof of \thmref{A} in \secref{A}.

\begin{example} \label{ex:Lpqr}
Let $L_{pqr}$ be the link shown in \figref{Lpqr}, with components $X$, $Y$ and $Z$ and with pairwise linking numbers $p$, $q$ and $r$.   Choose the Seifert surfaces $F_X$, $F_Y$ and $F_Z$ to be the obvious disks, essentially lying in the page, bounded by $X$, $Y$ and $Z$.  

\begin{figure}[h!]
\includegraphics[height=150pt]{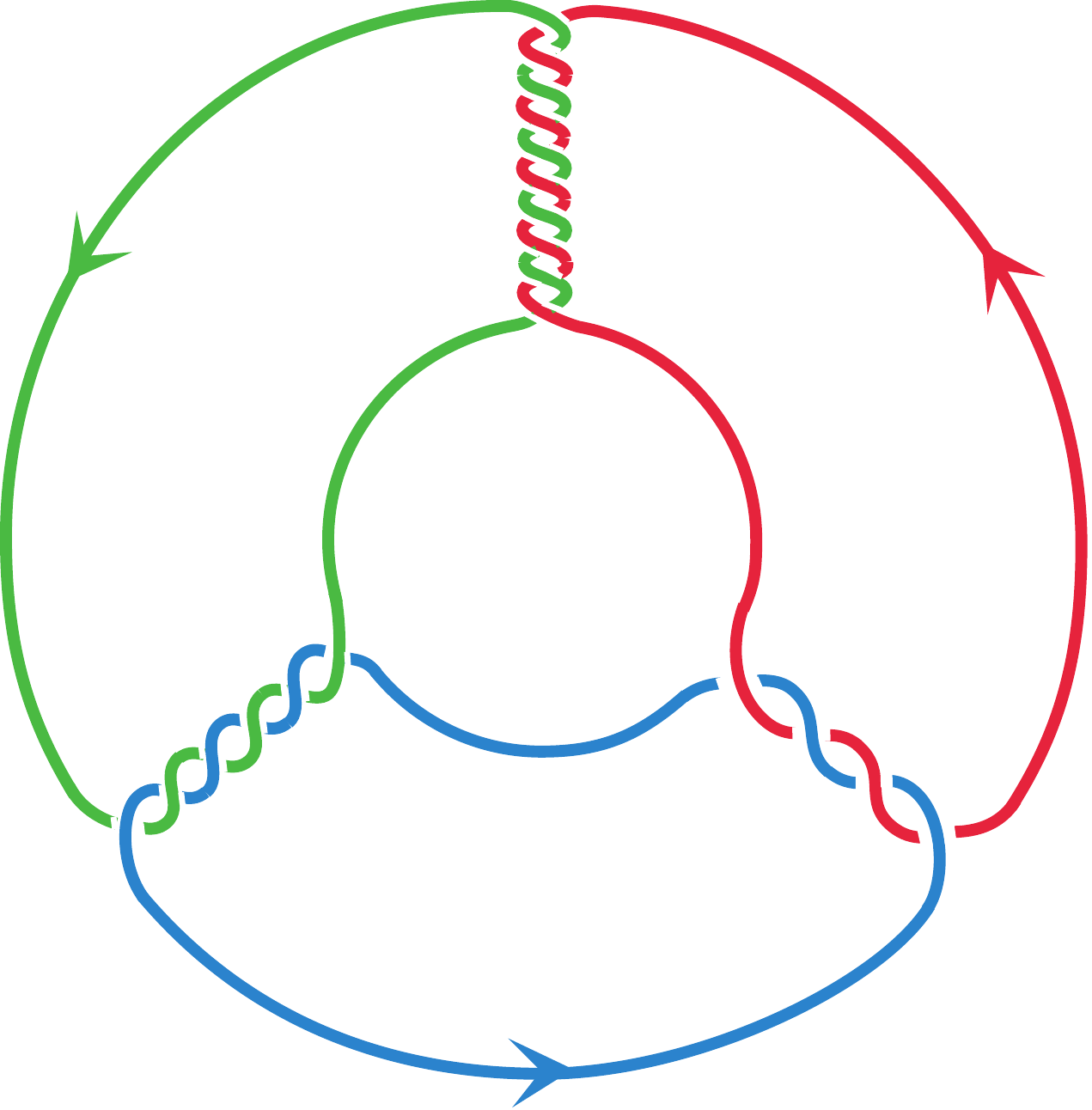}
\put(-80,25){$X$}
\put(-30,90){$Y$}
\put(-125,90){$Z$}
\put(-58,62){\small$r$}
\put(-96,64){\small$q$}
\put(-78,95){\small$p$}
\caption{The base link $L_{pqr}$ for  $p=5$, $q=3$, $r=-2$}
\label{fig:Lpqr}
\end{figure}

Starting at appropriate points on the link components, we can read off the words 
$$
w_X = y^rz^q \ , \ \ w_Y = z^px^r \quad\textup{and}\quad w_Z = x^qy^p.
$$
Thus $m_X = qr$, $m_Y = rp$ and $m_Z = pq$.   Furthermore, it is clear that there are no triple points of intersection of the three disks.  Therefore
$$
\mu(L_{pqr}) \ = \ qr+rp+pq-0 \ = \ 0 \ \in \ \bz_{\gcd(p,q,r)}.
$$  
The links $L_{pqr}$ will serve as the {\itb base links} for our proof of \thmref{A}.
\end{example} 

\begin{example}\label{ex:delta} 
Consider the {\itb delta move} shown in \figref{delta}, transforming the link $L$ on the left to the link $\what L$ on the right.  

\begin{figure}[h!]
\includegraphics[height=100pt]{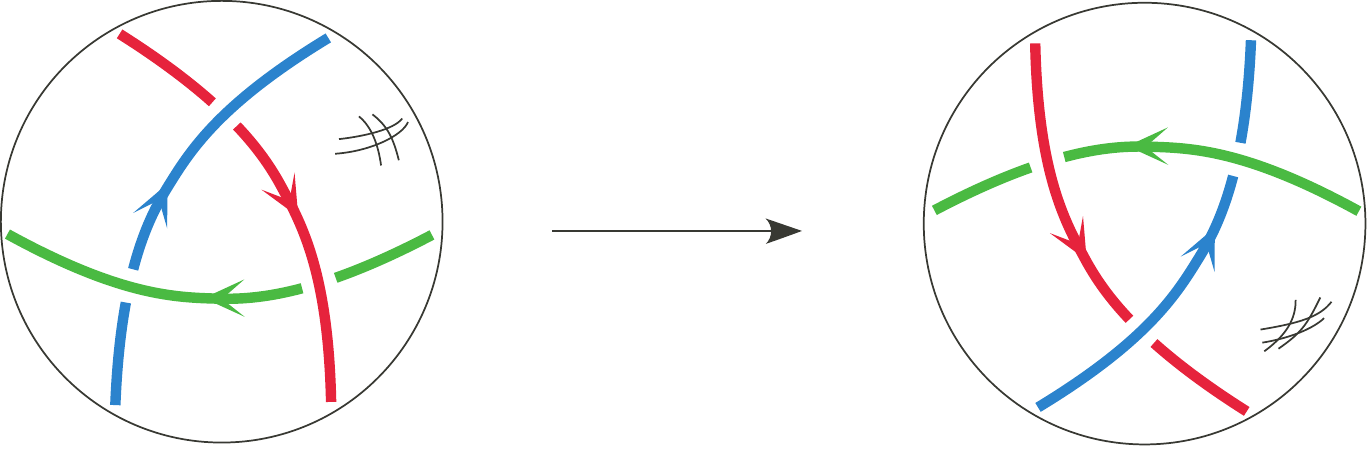}
\put(-228,95){$X$}
\put(-28,99){$\what X$}
\put(-230,-5){$Y$}
\put(-22,-5){$\what Y$}
\put(-315,45){$Z$}
\put(-113,46){$\what Z$}
\caption{The delta move $L\to \what L$}
\label{fig:delta}
\end{figure}

This move takes place within a $3$-ball, outside of which the link is left fixed.  It does not alter the pairwise linking numbers of $L$, and may be thought of as a higher order variant of a crossing change.

The delta move was introduced by Matveev \cite{Matveev}.  It was shown by Murakami and Nakanishi \cite{MurakamiNakanishi} that a suitable sequence of such moves can transform any link into any other link with the same number of components and the same pairwise linking numbers.  In particular, the base link $L_{pqr}$ above can be transformed into any other three-component link with pairwise linking numbers $p$, $q$ and $r$ by a sequence of delta moves.  The inductive step of our proof of \thmref{A} will be based on this observation, and so we determine right now the effect of the delta move on the $\mu$-invariant.   

If the three arcs involved in the delta move {\it do not} come from three distinct components of the link $L$, then the change can be achieved by a link homotopy, and hence neither Milnor's $\mu$-invariant nor Pontryagin's $\nu$-invariant for $g_L$ will change.  

But if the arcs {\it do} come from three distinct components of $L$ as shown in the figure, then $\mu$ increases by $1$, that is 
$
\mu(\what L) = \mu(L) + 1.
$
This can be seen as follows, using the geometric formula for the $\mu$-invariant.

In \figref{seifert}, we display on the left fragments of the Seifert surfaces $F_X$, $F_Y$ and $F_Z$ before the delta move, while on the right, after the delta move, each old surface is enlarged a bit to provide new surfaces $F_{\what X}$, $F_{\what Y}$ and $F_{\what Z}$. 
 
\begin{figure}[h!]
\includegraphics[height=120pt]{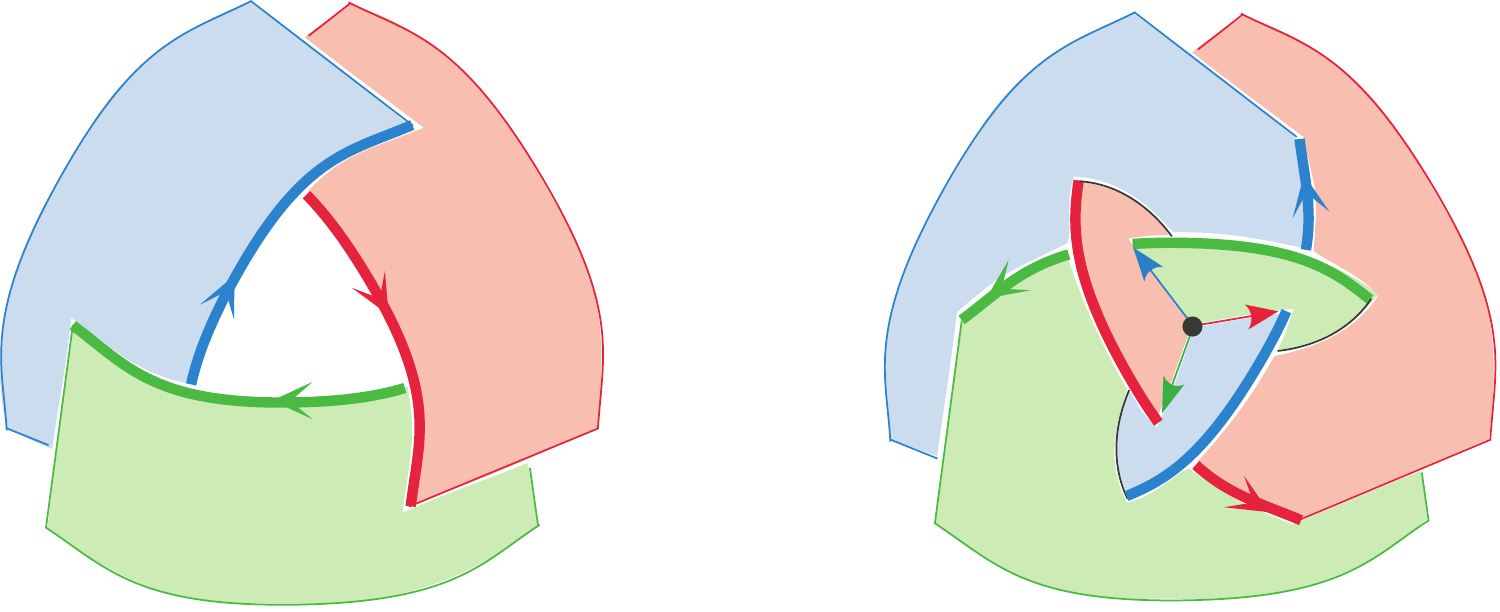}
\put(-300,90){$F_X$}
\put(-185,90){$F_Y$}
\put(-275,-5){$F_Z$}
\put(-125,90){$F_{\what  X}$}
\put(-10,90){$F_{\what  Y}$}
\put(-30,-3){$F_{\what  Z}$}
\caption{A negative triple point appears}
\label{fig:seifert}
\end{figure}

On the left, the three surface fragments are disjoint, while on the right, after their enlargement, they are not.  Where these surfaces now come together, we have an additional isolated triple point, and since the normals to the surfaces at this point form a left-handed frame, this triple point gets a minus sign.  Thus $t$ drops by $1$.   We also see that after the delta move, the curve $\what Z$ has two extra intersections with $F_{\what Y}$, the first positive and the second negative, and no extra intersections with $F_{\what X}$.  Since these new intersection points are adjacent along the curve, and of opposite signs,  it follows that the new count $m_{\what Z} =  m_{xy}(w_{\what Z})$ is equal to the old one $m_Z$.  Similarly $m_{\what X} = m_X$ and $m_{\what Y} = m_Y$.   Hence $\mu =  m_X + m_Y + m_Z  - t$  increases by 1,  as claimed. 
 
The heart of the proof of \thmref{A} will be to show that application of the delta move increases Pontryagin's $\nu$-invariant by $2$. 

\end{example}


\section{The Pontryagin $\nu$-invariant} 
\label{sec:pontryagin}

Heinz Hopf proved in 1931 that homotopy classes of maps from the $3$-sphere to the $2$-sphere are in one-to-one correspondence with the integers via his now famous Hopf invariant.  Pontryagin \cite{Pontryagin41} generalized this to give the homotopy classification for maps from an arbitrary finite $3$-complex to the $2$-sphere.  It is convenient for our purposes to restrict to smooth maps from $3$-manifolds to the $2$-sphere,  and to use Poincar\'e duality to take Pontryagin's result, originally presented via cohomology, and reformulate it in homological terms.

\part{Definition and properties}

Fix a closed oriented smooth $3$-manifold $\man$.  The homotopy classification of maps $f: \man\to S^2$  can be expressed using two differential topological invariants.  

The primary invariant
$$
\lambda(f) \ \in \ H_1(\man) \qquad\text{(with integral coefficients understood)}
$$
records the homology class of the preimage link $\bl = f^{-1}(*)$
of any regular value $\pt$ of $f$ (oriented as explained below), or equivalently the Poincar\'e dual of the pull-back $f^*(\omega)$ of the orientation class $\omega\in H^2(S^2)$.  It is easily shown that two maps have the same primary invariant if and only if they induce the same map on homology.     

The secondary invariant 
$$
\nu(f_0,f_1) \ \in \ \bz_{2d(\lambda)}
$$ 
compares two maps $f_0$ and $f_1$ with the same primary invariant $\lambda$.  Here $d(\lambda)$ is the divisibility of $\lambda$ as an element of the free abelian group $H_1(\man)/$torsion.  Thus $d(\lambda) = 0$ if $\lambda$ is of finite order, and otherwise $d(\lambda)$ is the largest positive integer $d$ for which $\lambda = d\kappa$ for some $\kappa\in H_1(\man)$.   

For example, if $\man$ is the $3$-torus $\tor$, then 
$
\lambda(f) = (p,q,r) \in H_1(\tor) \cong \bz^3
$
where $p$, $q$ and $r$ are the degrees of $f$ restricted to the coordinate $2$-tori, and the divisibility $d(p,q,r)$ is the greatest common divisor of $p$, $q$ and $r$. 

In the next few paragraphs, we discuss these invariants $\lambda$ and $\nu$ in more detail, and provide a natural way, in the special case that $\man$ is the $3$-torus, to transform $\nu$ from a relative invariant into an absolute one, meaning a function of a single map.

First recall that a {\it framing} of a smooth link in $M$ is a homotopy class of trivializations of the normal bundle of the link. It can be represented by an orthonormal triple $(\u,\vv,\t)$ of vector fields along the link with respect to some Riemannian metric on $M$, where $\t$ is tangent to the link.  We can use this to orient the link by $\t$ by insisting that the triple give the orientation on $M$.  Conversely, if the link is already oriented by $\t$, then the framing can be specified by a single unit normal vector field $\u$, as $\vv$ is then determined by the condition that $(\u,\vv,\t)$ be an orthonormal frame giving the orientation on $M$.  In pictures, therefore, we often indicate a framing on an oriented link by simply drawing a thin parallel push-off of the link, recording the tips of the vectors in $\u$.

Now given a map $f:\man\to S^2$ with regular value $*$, the link $\bl = f^{-1}(*)\subset \man$ inherits a framing by pulling back an oriented basis for the tangent space to $S^2$ at $*$, and acquires an orientation from this framing, as above.   Equipped with this framing and orientation, $\bl$ will be called the {\itb Pontryagin link} of $f$ at $\pt$.  

Note that any framed oriented link $\bl$ in $\man$ arises as the Pontryagin link of some map from $M$ to $S^2$.  In particular, the {\itb Pontryagin--Thom construction} produces such a map, given by wrapping each normal disk fiber of a tubular neighborhood of $\bl$ around the $2$-sphere by the exponential map, using the framing to identify the fiber with the disk of radius $\pi$ in the tangent space to $S^2$ at $*$, and sending everything outside the neighborhood to the antipode of $*$.  This construction provides a one-to-one correspondence 
$$
[M,S^2] \ \longleftrightarrow \ \Omega_1^{\text{fr}}(\man)
$$
where $[M,S^2]$ is the set of homotopy classes of maps $\man\to S^2$, and $\Omega_1^{\text{fr}}(\man)$ is the set of framed bordism classes of framed oriented links in $\man$  (see e.g.\ Chapter 7 in Milnor \cite{Milnor65}).  

Now we return to our discussion of the invariants associated with maps $f:\man\to S^2$.  

An easy argument shows that the primary invariant $\lambda(f)$, the homology class of the oriented link $f^{-1}(*)$, is independent of the choice of regular value $*$, and that $\lambda(f)$ is invariant under homotopies of $f$.  Indeed, the preimages $\bl_0$ and $\bl_1$ of any regular values for any pair of maps homotopic to $f$ are bordant in $\man\times[0,1]$ (see Milnor \cite{Milnor65} for a proof).  It follows that $\bl_0$ and $\bl_1$ are homologous in $\man$.  Conversely, homologous links in $\man$ are bordant in $\man\times[0,1]$ by a standard argument going back to Thom \cite{Thom}.   
This shows that the partition of $[\man,S^2]$ into subsets $[\man,S^2]_\lambda$ according to their primary invariants $\lambda\in H_1(\man)$ corresponds to the partition of $\Omega_1^{\text{fr}}\man$ into {\it unframed} bordism classes.

The secondary invariant associated with a pair of bordant framed links measures the obstruction to extending the framings on the links across any bordism between them.    More precisely, given $f_0$ and $f_1$ in $[\man,S^2]_\lambda$ with Pontryagin links $\bl_0$ and $\bl_1$, and an oriented surface 
$\bF \subset \man\times[0,1]$ with $\partial\bF  =  \bl_1\times1 - \bl_0\times0$,  the framings on $\bl_0$ and  $\bl_1$ combine to give a normal framing of $\bF$ along its boundary.  The obstruction to extending this framing across $\bF$ is measured by its relative Euler class $e(\bF)$ in $H^2(\bF,\partial\bF;\pi_1SO(2)) = \bz$,
which in homological terms is the intersection number of $\bF$ with a generic perturbation of itself that is directed by the given framings along $\partial\bF$.   This class depends on the choice of the regular values of $f_0$ and $f_1$ used to define the Pontryagin links, and on the choice of the bordism $\bF$ between the links.  But the residue class of $e(\bF)$ mod $2d(\lambda)$ does not depend on these choices; see Gompf~\cite{Gompf} and Cencelj, Repov{\v s} and Skopenkov~\cite{Cencelj} for details, and also see Auckly and Kapitanksi~\cite{AucklyKapitanski}.  
This residue class 
$$
\nu(f_0,f_1)  \ = \ e(\bF) \textup{ mod }2d(\lambda) \ \in \ \bz_{2d(\lambda)}
$$
will be referred to as the {\itb relative Pontryagin $\nu$-invariant} of $f_0$ and $f_1$. 

\part{Converting the relative Pontryagin $\nu$-invariant into an absolute invariant}

The task of converting $\nu$ into an {\it absolute} invariant, that is, changing it from a function of two variables to a function of one variable, requires the choice of a {\itb base map} $f_\lambda$ in each subset $[\man,S^2]_{\lambda}$ of homotopy classes of maps with primary invariant $\lambda$.  One can then define the {\itb\boldmath absolute Pontryagin $\nu$-invariant} by
$$
\nu(f) \ = \ \nu(f,f_\lambda)
$$
for any $f\in[\man,S^2]_\lambda$.  Whether such choices can be made in a topologically meaningful way depends on the manifold $\man$. 
 
For example, Pontryagin \cite[page 356]{Pontryagin41} explicitly cautioned against trying to make this choice when $\man=S^1\times S^2$.  In this case there are, up to homotopy, exactly two maps to $S^2$ with primary invariant $1\in H_1(S^1\times S^2) = \bz$, meaning degree $1$ on the cross-sectional $2$-spheres in $S^1\times S^2$.  One of these is the projection $f_0(\theta,x)  =  x$ to the $S^2$ factor,  and the other is the twist map $f_1(\theta,x)  =  \rot_\theta(x)$ that rotates the $S^2$ factor once while traversing the $S^1$ factor; here $\rot_\theta$  indicates rotation of  $S^2$ through an angle $\theta$ about its polar axis.  Note that the double twist $f_2(\theta,x)  =  \rot_{2\theta}(x)$ is homotopic to $f_0$.   

What Pontryagin observed is that $f_0$ and $f_1$ differ by an automorphism of $S^1\times S^2$, and so there is no natural way to choose which one should serve as the base map for the homology class $[S^1\times S^2,S^2]_1$.  More precisely, the automorphism $h$ given by $h(\theta,x)  =  (\theta,\rot_\theta(x))$ satisfies $f_0h=f_1$, while $f_1h=f_2$, which is homotopic to $f_0$.  Thus the maps $f_0$ and $f_1$ have equal topological status, and so neither is more basic than the other.

A key feature of this example is the existence of a homotopically nontrivial automorphism of $S^1\times S^2$ that induces the identity on homology.  In general, if a $3$-manifold $\man$ supports such an automorphism $h$, then $\lambda(fh) = \lambda(f)$ for any map $f:\man\to S^2$.   Hence the existence of $h$ provides a potential obstruction to the natural choice of base maps in $[\man,S^2]_\lambda$ for all $\lambda$.  The $3$-torus has no such automorphisms, due to the fact that its higher homotopy groups vanish.   

For each triple of integers $p$, $q$  and $r$,  we now show how to pick out a specific map $f_{pqr}:\tor\to S^2$ having these preassigned cross-sectional degrees, which can serve in a topologically meaningful way as the base map for the set $[\tor,S^2]_{(p,q,r)}$ of all such maps.   

We will describe $f_{pqr}$ by specifying its Pontryagin link $\bl_{pqr}$, as follows.  Choose three pairwise disjoint circles $\calc_s$, $\calc_t$ and $\calc_u$ that are cosets of the coordinate circle subgroups 
$$
S^1_s = S^1 \times 0 \times 0 \ , \ S^1_t = 0 \times S^1 \times 0 \ \text{ and } \ S^1_u = 0 \times 0 \times S^1
$$
of the $3$-torus (where as usual $S^1=\br/2\pi\bz$) with disjoint tubular neighborhoods $\caln_s$, $\caln_t$ and $\caln_u$.  Equip these circles with their {\itb coordinate framings} induced from the Lie framing $(\partial_s,\partial_t,\partial_u)$ of the tangent bundle of $\tor$, that is, $(\partial_t,\partial_u)$ for $\calc_s $, $(\partial_u,\partial_s)$ for $\calc_t$ and $(\partial_s,\partial_t)$ for $\calc_u$.  Then construct $\bl_{pqr}$ from $p$ parallel copies of $\calc_s$ in $\caln_s$ (meaning $p$ distinct cosets of $S^1_s$ lying in $\caln_s$), $q$ copies of $\calc_t$ in $\caln_t$ and $r$ copies of $\calc_u$ in $\caln_u$, all with their coordinate framings, as indicated in \figref{basemaps}.  Thus $f_{pqr}$ wraps the disk fibers of the tubes $\caln_s$, $\caln_t$ and $\caln_u$ around $S^2$ by maps of degree $p$, $q$ and $r$, and is constant elsewhere.

\begin{figure}[h!]
\includegraphics[height=120pt]{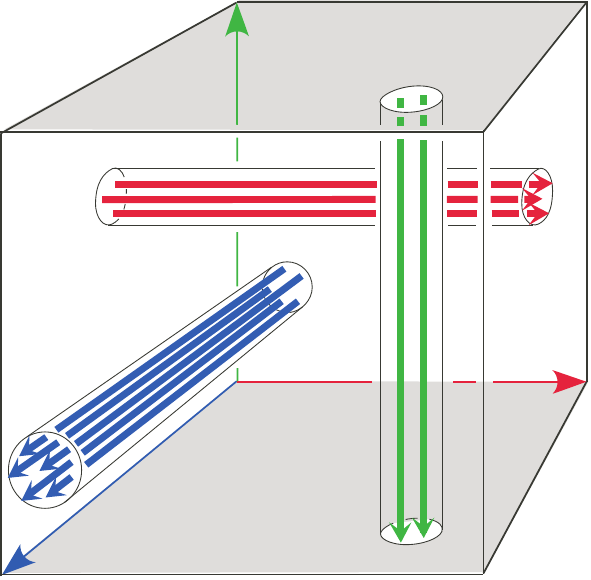}
\put(-128,-2){\small$s$}
\put(-76,122){\small$u$}
\put(1,39){\small$t$}
\put(-110,40){\small$p$}
\put(-90,66){\small$q$}
\put(-50,25){\small$r$}
\caption{Pontryagin link $\bl_{pqr}$ for the base map $f_{pqr}$}
\label{fig:basemaps}
\end{figure}

Here and below, the $3$-torus is pictured as the cube $[0,2\pi]^3$ in $stu$-space with opposite faces identified.  The axes correspond to the coordinate circles $S^1_s$, $S^1_t$ and $S^1_u$, and the coordinate framings are the ``blackboard" framings, i.e.\ those given by parallel push-offs in the projection shown.  The link $\bl_{5\,3\,-2}$ is shown in the figure.  Note that by construction $\bl_{000}$ is empty, so $f_{000}$ is constant.

\part{Computing the absolute Pontryagin $\nu$-invariant}

Our goal is to give a simple procedure for computing the Pontryagin $\nu$-invariant  of a map $f:\tor\to S^2$ from a ``toral diagram" of its Pontryagin link $\bl$.

By definition, a {\itb toral diagram} of $\bl$ consists of 
\vspace{.02in}
\begin{indentation}{.05em}{1em}
\begin{enumerate}
\item a classical oriented link diagram $\cald$ in the $2$-torus $\torh$ with {\itb crossings} $\calc$, 
\smallskip
\item a finite set of signed points in $T^2$, the {\itb marked points}, partitioned into
the {\itb isolated} ones $\calm \subset T^2-\cald$, and the {\itb internal} ones $\caln\subset\cald-\calc$, \label{markedpoints} and
\smallskip
\item  integer {\itb framings} for each component of $\cald$ and each point in $\calm$.
\end{enumerate}  
\end{indentation}
\vspace{.02in}

For example, the link $\bl_{pqr}$ above is duplicated in \figref{diagrams}(a) with its toral diagram beneath it, and another example is given in \figref{diagrams}(b).        

\begin{figure}[h!]
\includegraphics[height=175pt]{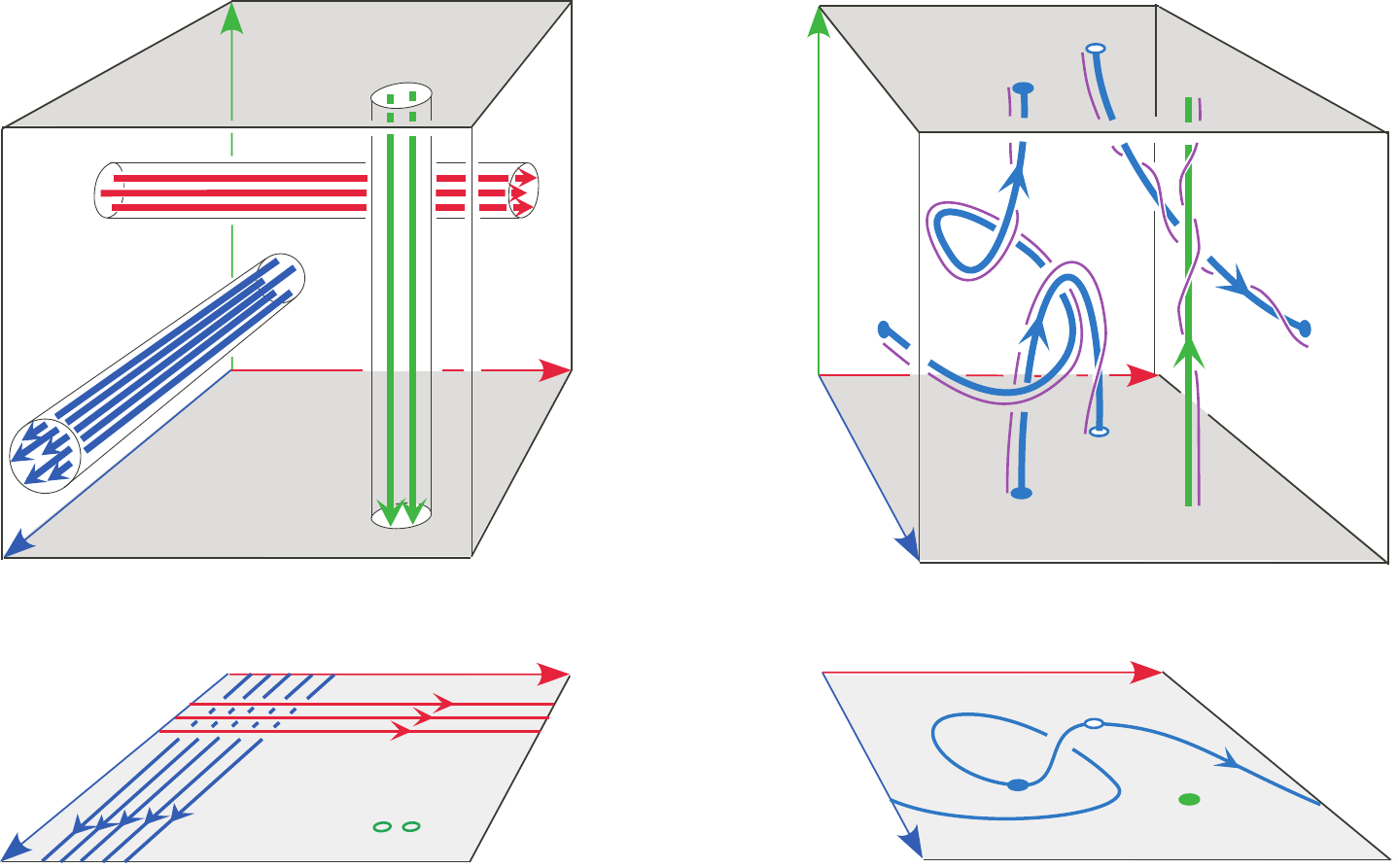}
\put(-100,24){\tiny$3$}
\put(-39,12){\tiny$2$}
\put(-300,-20){(a) The link $\bl_{pqr}$ and its diagram}
\put(-100,-20){(b) Another example}
\caption{Toral diagrams of framed links in $\tor$}
\label{fig:diagrams}
\end{figure}

It is understood that $\bl$ should project to $\cald\cup\calm$ under the projection $\tor\to\torh$ sending $(s,t,u)$ to $(s,t)$, and so points in $\calm$ correspond to vertical components $\pt\times\pt\times S^1$ in $\bl$, oriented up or down according to the signs.  The points in $\caln$ correspond to transverse intersections of $\bl$ with the {\itb horizontal $2$-torus} \,$\torh\times0$ -- shaded in the figures -- where the sign $+1$ or $-1$ indicates whether the curve points up or down near the intersection. 

To explain the crossings $\calc$, view $\tor$ as the cube $[0,2\pi]^3$ in $stu$-space with opposite faces identified, as before, and $\torh$ as the square $[0,2\pi]^2$ in the $st$-plane with opposite sides identified.  Above $\cald-\caln$, the link $\bl$ resides in $\torh\times(0,2\pi)$.  At a crossing, the over-crossing strand is the one with the larger vertical $u$-coordinate in the cube.

Finally, the framings specify a push-off of $\bl$ by comparison with the blackboard framing of $\cald\cup\calm$ in $\torh$.  These ``blackboard" framings are obtained by pushing $\cald\cup\calm$ off itself in the direction of a normal vector field in $\torh$, and then lifting these push-offs to a collection of framing curves for the components of $\bl$ that we call their {\it $0$-framings}.  For vertical circles, these are just the coordinate framings.  Now the $n$-framing on any given component of $\bl$ is the one obtained from the $0$-framing by adding $n$ full twists, right or left handed according as $n$ is positive or negative.

In these figures we typically indicate the framing on the link in $\tor$ by a thin push-off, and use the convention that the unlabeled components of the diagram have framing zero, that is, the blackboard framing.  We also denote the positive marked points in the diagram with solid dots, and the negative ones with hollow dots.

We have allowed isolated marked points in the definition above so that projections of links such as $\bl_{pqr}$ will qualify as toral diagrams, and to facilitate our later work.  Note, however, that a generic isotopy of $\bl$ will eliminate the set $\calm$ of isolated marked points, converting an $n$-framed isolated marked point -- corresponding to a vertical circle in $\bl$ -- into a small $(n\pm1)$-framed circle with one internal marked point -- corresponding to a spiral perturbation of the vertical circle.   More precisely, using the notation above of solid dots for positive marked points and hollow dots for negative ones, we have
$$
\includegraphics[height=25pt]{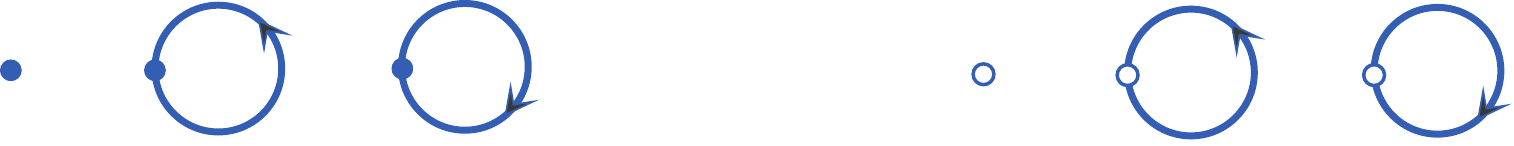}
\put(-250,11){$=$}
\put(-205,11){$=$}
\put(-261,3){\small$n$}
\put(-235,-5){\small$n-1$}
\put(-190,-5){\small$n+1$}
\put(-135,10){and}
\put(-85,10){$=$}
\put(-40,10){$=$}
\put(-93,2){\small$n$}
\put(-66,-6){\small$n+1$}
\put(-24,-6){\small$n-1$}
$$
as is readily verified by a suitable picture in the $3$-torus.

Now suppose we are given a toral diagram of a Pontryagin link $\bl$ for a map $f:\tor\to S^2$.  We say that the diagram {\itb represents} $f$, and seek to compute $\nu(f)$ from it.

First observe that the primary invariant  
$$
\lambda(f) \ = \ (p,q,r) \label{primary}
$$
is easily read from the diagram.  Indeed $p$ and $q$ (which are the degrees of the projections of $\bl$ to the horizontal circle factors $S^1_s$ and $S^1_t$) are just the intersection numbers of $\cald$ with $S^1_t$ and $-S^1_s$, and $r$ (the degree of the projection of $\bl$ to the vertical circle $S^1_u$\,, or equivalently the intersection number of $\bl$ with the horizontal $2$-torus) is the sum of the signs of all the marked points.  We call $p$ and $q$ the {\itb horizontal winding numbers} of the diagram, and $r$ its {\itb vertical winding number}.   For what follows, we will also need to consider the vertical winding of the individual components of $\bl$.  Each such component $\bl_i$ projects either to some subset $\cald_i$ of $\cald$, or to a point $\calm_i$ in $\calm$ (if $\bl_i$ is vertical).  In either case, we call this projected image the ``$i^\text{th}$ component" of the diagram, and write $r_i$ for the sum of all the marked points that lie on it.  Clearly $r_i$ is just the vertical winding number of $\bl_i$, and $\sum r_i = r$, the total vertical winding number of the diagram. 

To compute $\nu(f)$, we will transform $\bl$ by a sequence of framed bordisms into $\bl_{pqr}$ with some extra twists in the framing.  The number of twists is by definition $\nu(f)$; counting these twists ultimately yields the simple formula for $\nu(f)$ that will be given in \propref{pontryagin}.   

To cleanly state this formula, we assume from the outset that our diagram has no crossings.  There is no loss of generality in doing so since the crossings can be eliminated by {\itb saddle bordisms} of $\bl$, as illustrated in \figref{crossing} (viewing $\bl$ from the top, looking down on a crossing in the horizontal torus) which of course do not change $\nu(f)$.   Furthermore, we will see that the Pontryagin link of a ``generic link" (to be defined in \secref{Aprep}), has a toral diagram without crossings.

\begin{figure}[h!]
\includegraphics[height=70pt]{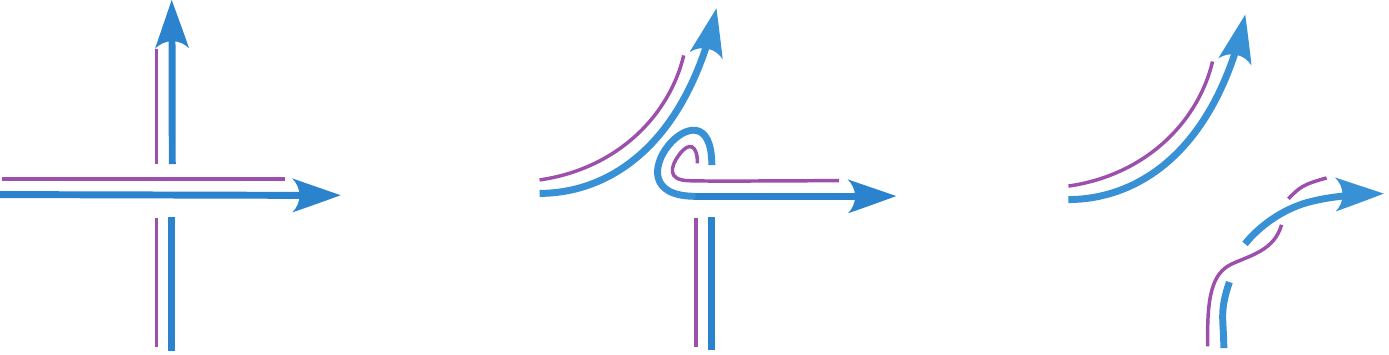}
\put(-193,28){$\sim$}
\put(-201,17){\tiny bordism}
\put(-87,28){$=$}
\caption{Using saddles to eliminate crossings}
\label{fig:crossing}
\end{figure}

Each such saddle bordism is achieved by adding a $1$-handle, as indicated on the left side of \figref{bordism}, and drawn in full, suppressing one dimension in $\tor$, on the right side.

\begin{figure}[h!]
\includegraphics[height=80pt]{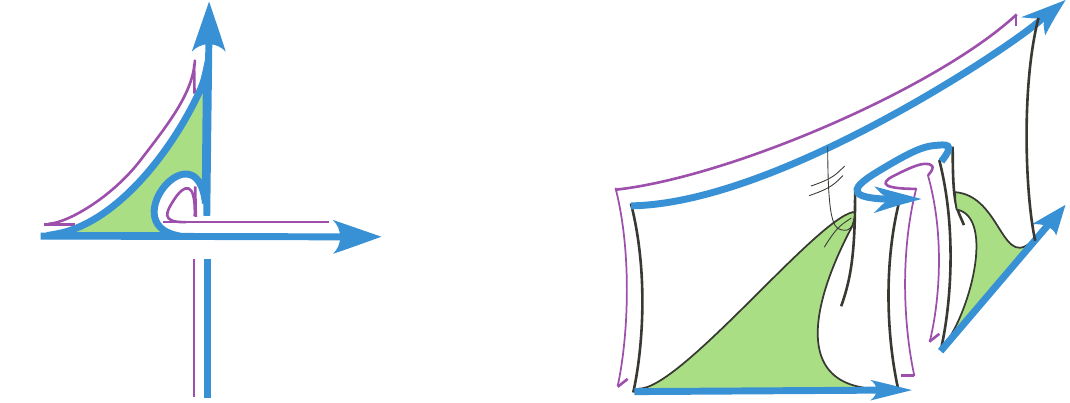}
\caption{The saddle bordism}
\label{fig:bordism}
\end{figure}

The effect of the saddle bordism on the diagram is easy to describe.  If it is used to eliminate a self-crossing of an $n_i$-framed component $\cald_i$ (meaning $\cald_i$ is the projection of a component $\bl_i$ of $\bl$) then $\cald_i$ splits into two components whose framings must add up to $n_i\pm1$, depending on the sign of the crossing.  Beyond this condition, the framings on the new components can be chosen arbitrarily since twists in the original framing can be shifted along $\bl_i$ at will.  If the saddle bordism is used to eliminate a crossing between distinct components $\cald_i$ and $\cald_j$ of $\cald$ with framings $n_i$ and $n_j$, then the result is a single component with framing $n_i+n_j\pm1$.     

Once we have a toral diagram {\sl without crossings} representing $f$, the extra data needed to compute $\nu(f)$ is the list of {\itb vertical winding numbers} $r_i$ of its components together with one additional integer $n=\sum n_i$, the sum of all the component framings $n_i$, which we call the {\itb total framing} and place as a label next to the diagram. 

To state the formula efficiently, we will use one more list of numbers that is easily read from the diagram.    First pick a basepoint $*$ in $\torh$ away from $\cald\cup\calm$, and then for each $i$, choose an arc $\gamma_i$ that runs from the $i^\text{th}$ component of $\cald\cup\calm$ to $*$.   Now define the {\itb depth} \label{depth} of the $i^\text{th}$ component to be 
$$
d_i \ = \ 2\,\gamma_i\,\dt\,\cald \mod{2\gcd(p,q,r)}
\,,
$$
that is, twice the intersection number in $\torh$ of the arc $\gamma_i$ with the union $\cald$ of the closed curves in the diagram.  This intersection number must be properly interpreted for components of $\cald$.  In this case the initial point of $\gamma_i$ lies on $\cald$, contributing $\pm\frac12$ to the intersection number, and thus $\pm1$ to $d_i$.  It follows that $d_i$ is always an odd integer for components of $\cald$, and an even integer for components of $\calm$, meaning isolated marked points.  An example is shown in \figref{depth}.

\begin{figure}[h!]
\includegraphics[height=120pt]{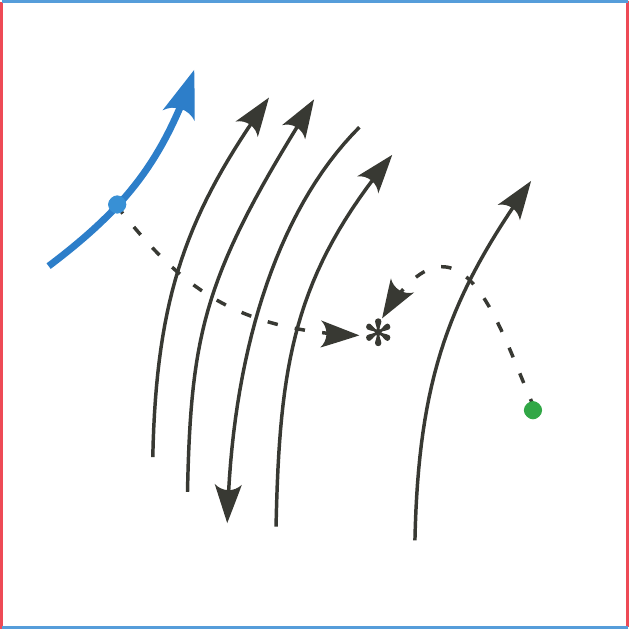}
\put(-107,87){\small$\cald_i$}
\put(-100,68){\small$\gamma_i$}
\put(-26,31){\small$\calm_j$}
\put(-20,57){\small$\gamma_j$}
\put(-195,60){$d_i = 2\cdot2\frac12 = 5$}
\put(8,47){$d_j = 2\cdot-1 = -2$}
\caption{Depths of components in the diagram}
\label{fig:depth}
\end{figure}

Of course the depth $d_i$ depends on the choice of arc $\gamma_i$, but only modulo $2\gcd(p,q)$, and so it is certainly well defined modulo $2\gcd(p,q,r)$.  Furthermore, in the formula for $\nu(f)$ below, the depths appear only as coefficients in the sum $\sum d_ir_i$.  It is readily seen that this sum changes by a multiple of $\sum 2r_i = 2r$ when moving the basepoint $*$, and so it is well defined modulo $2\gcd(p,q,r)$, independent of the choices of $*$ and of the arcs $\gamma_i$. 

We can now state the main result of this section.    

\begin{proposition}
\label{prop:pontryagin}
\textbf{\boldmath Let $f:\tor\to S^2$ be a smooth map whose Pontryagin link $\bl$ is represented by a toral diagram in $\torh$ without crossings, with horizontal winding numbers $p$ and $q$, vertical winding number $r$, and total framing $n$.  Then the primary and secondary Pontryagin invariants of $f$ are given by
$$
\begin{aligned}
\lambda(f) \ &= \ (p,q,r) \quad\text{and} \\
\nu(f) \ &= \ n + pq + \textstyle\sum d_ir_i \mod{2\gcd(p,q,r)}
\end{aligned}
$$
where $r_i$ and $d_i$ are the vertical winding numbers and depths $($with respect to any chosen basepoint$)$ of the components of the diagram.}
\end{proposition}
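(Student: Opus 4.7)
The first statement $\lambda(f)=(p,q,r)$ is independent of the no-crossings hypothesis and follows directly from the Pontryagin--Thom interpretation of degree: the degree of $f$ restricted to the coordinate $2$-torus $T^2_{tu}$ equals the algebraic intersection number of $\bl$ with that torus, which reads off the diagram as the signed intersection of $\cald$ with $S^1_t$, namely $p$; similarly for $q$ on $T^2_{us}$; while the degree of $f$ on the horizontal $2$-torus $T^2_{st}$ is precisely the signed sum of all marked points, which is $r$.

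For the secondary invariant my plan is to construct an explicit framed surface $\bF \subset \tor \times [0,1]$ with $\partial\bF = \bl\times 1 - \bl_{pqr}\times 0$ (using the coordinate framings on $\bl_{pqr}$) and then to evaluate $\nu(f)=e(\bF)$ modulo $2\gcd(p,q,r)$ by decomposing the reduction of $\bl$ to $\bl_{pqr}$ into three moves that match the three summands of the formula. The given framings of $\bl$ contribute $n$ directly to $e(\bF)$. Next, since the toral diagram has no crossings, $\cald$ is embedded in $\torh$, and an ambient isotopy in $\torh$ (which lifts to a framed isotopy of $\bl$ in $\tor$) standardizes its homology class $(p,q)$ so that $\cald$ becomes $p$ parallel copies of $\calc_s$ together with $q$ parallel copies of $\calc_t$ together with null-homologous loops, the last of which can be capped off by embedded disk bordisms in $\tor\times[0,1]$. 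In $\bl_{pqr}$ the $p$ horizontal and $q$ vertical meridians are completely separated at distinct $u$-heights, whereas in the standardized diagram they still cross $pq$ times in $\torh$; separating them into the layered configuration of $\bl_{pqr}$ requires $pq$ saddle bordisms, each contributing a unit twist to $e(\bF)$ of uniform sign, and this yields the $pq$ summand.

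Finally, the isolated marked points of the diagram are replaced, via the move displayed just before the proposition, by small circles with internal marked points; the $\pm 1$ shift in framing introduced by this substitution is cancelled by a compensating shift in $\sum d_ir_i$, so the formula is preserved. It then remains to drag each internal marked point along its arc $\gamma_i$ to a standard position near the basepoint and reassemble the vertical strands into the $r$ copies of $\calc_u$ appearing in $\bl_{pqr}$. Each transverse crossing of $\gamma_i$ with a component of $\cald$ forces a vertical segment of $\bl$ to pass through a horizontal strand, and parallel transport of the normal framing around that horizontal strand deposits $2r_i$ units of twist into $e(\bF)$; summing over all components produces exactly $\sum d_ir_i$, the factor of $2$ in the definition of depth being precisely this transport twist. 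The main obstacle is this last step --- the careful bookkeeping of how the normal framing of a vertical strand evolves as it is dragged past horizontal ones --- together with the verification that the answer is well-defined modulo $2\gcd(p,q,r)$; the latter reduces to the independence-of-choices observation already noted in the text, that changes of basepoint alter $\sum d_ir_i$ by a multiple of $2r$ and reroutings of the arcs $\gamma_i$ by multiples of $2p$ or $2q$.
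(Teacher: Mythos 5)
Your overall strategy is the same as the paper's: convert $\bl$ to the base link $\bl_{pqr}$ by a sequence of framed bordisms (saddles, disks, annuli) and track the change in framing, identifying the three summands $n$, $pq$ and $\sum d_ir_i$ with the three stages of the reduction. The computation of $\lambda(f)$ and the well-definedness discussion also match the paper. But two of your intermediate claims are wrong as stated and need repair.

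First, the $pq$ term. No ambient isotopy of $\torh$ can turn the embedded $1$-manifold $\cald$ into ``$p$ parallel copies of $\calc_s$ together with $q$ parallel copies of $\calc_t$'': those two families intersect in $pq$ points whenever $pq\ne 0$, so that configuration is not embedded and cannot be the image of an embedded diagram under an isotopy. The correct normal form for the essential part of $\cald$ is a $(p,q)$ \emph{torus link}, i.e.\ $\gcd(p,q)$ parallel copies of the $(p/\gcd,q/\gcd)$ torus knot, plus inessential circles; the $pq$ saddle bordisms are then exactly what is needed to cut this torus link apart into the $p$ copies of $S^1_s$ and $q$ copies of $S^1_t$ of $\bl_{pqr}$, each saddle resolving one of the $pq$ crossings between the two coordinate families and contributing $+1$ to the framing. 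You arrive at the right count, but through an impossible intermediate picture.

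Second, the marked points. An internal marked point lies \emph{on} a component $\cald_i$ and records vertical winding of $\bl_i$; it cannot be ``dragged along $\gamma_i$ to the basepoint,'' since detaching it from $\cald_i$ changes the vertical winding of that component. The paper's move is to first spend $|r_i|$ saddle bordisms converting the vertical winding of $\bl_i$ into $|r_i|$ vertical circles adjacent to $\cald_i$, at a cost of $r_i$ added to the total framing; this initial cost is precisely the $\pm 1$ half-intersection contribution of the endpoint of $\gamma_i$ to $d_i$ (which is why $d_i$ is odd for components of $\cald$ and even for points of $\calm$). Only then are the resulting vertical circles transported along $\gamma_i$, each transverse crossing with $\cald$ costing a pair of saddles and hence $\pm 2$ per circle, which accounts for the remaining even part of $d_ir_i$. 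Your proposal attributes the odd part to a ``compensating shift'' in the isolated-point conversion and runs that conversion in the wrong direction (the target $\bl_{pqr}$ consists of vertical circles, i.e.\ isolated marked points, so you want to create them, not eliminate them); as written, the bookkeeping that produces $\sum d_ir_i$ does not close up.
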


\begin{proof}
The formula for $\lambda(f)$ was derived on page \pageref{primary}, and the formula for $\nu(f)$ is, as explained above, at least well defined modulo $2\gcd(p,q,r)$, independent of the choice of basepoint.  So it remains to verify that this is the correct formula for $\nu(f)$. 

First observe that, in the absence of crossings, the integers $r_i$ are all that are needed to recover the link $\bl$ up to {\sl isotopy}.  In particular, each nonvertical component of $\bl$ can be taken to wind monotonically around the last circle factor of $\tor$. Furthermore, equipped with the total framing $n$, we can recover the framed link $\bl$ up to {\sl framed bordism}.  To see this, note that a pair of saddle bordisms can be used as shown in \figref{framing} to transfer a twist in the framing from any component of $\bl$ to any other, and so we can distribute the framings in any desired way among the components.

\begin{figure}[h!]
\includegraphics[height=80pt]{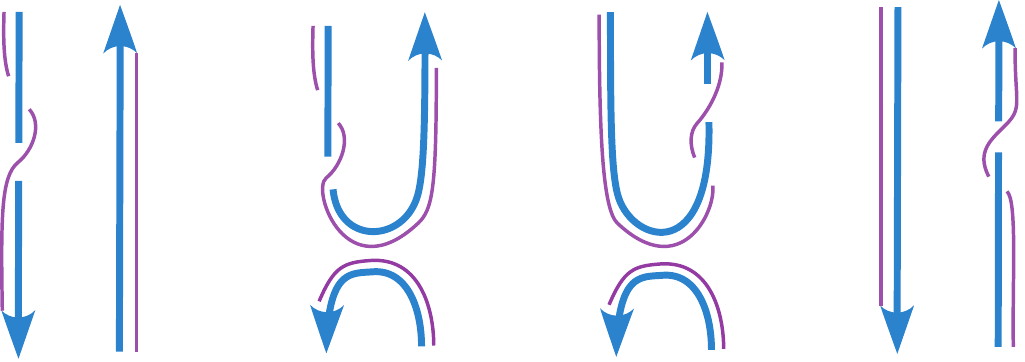}
\put(-183,32){$\sim$}
\put(-117,32){$=$}
\put(-55,32){$\sim$}
\caption{Using saddles to transfer twists}
\label{fig:framing}
\end{figure}

We now propose to transform $\bl$ by a sequence of framed bordisms into $\bl_{pqr}$ with some extra twists in the framing.  These correspond to homotopies of $f$ and so do not change the Pontryagin invariant $\nu(f)$.  We will carry this out on a diagrammatic level, transforming our given toral diagram, with total framing $n$, to the diagram for $\bl_{pqr}$ shown in \figref{diagrams}(a) with some total framing, which by definition will equal $\nu(f)$.  So we must simply keep track of the change in the total framing as we proceed.  

We first use saddle bordisms to replace the vertical winding of each non-vertical component $\bl_i$ by $|r_i|$ zero-framed vertical components, at the cost of adding $r_i$ to the total framing.  The new vertical circles are ``adjacent" to $\cald_i$ -- meaning displaced slightly to the {\it right} of it in the projection -- and oriented up or down according to the sign of $r_i$.  

This is illustrated in \figref{winding}.  The net effect on the diagram is to reduce all the vertical winding numbers to zero on the components of $\cald$ (i.e.\ to eliminate the internal marked points $\caln$), to add $\sum|r_j|$ isolated marked points, and to add $\sum r_j$ to the total framing of the diagram, where the sums are only over the non-vertical components of $\bl$.

\begin{figure}[h!]
\includegraphics[height=150pt]{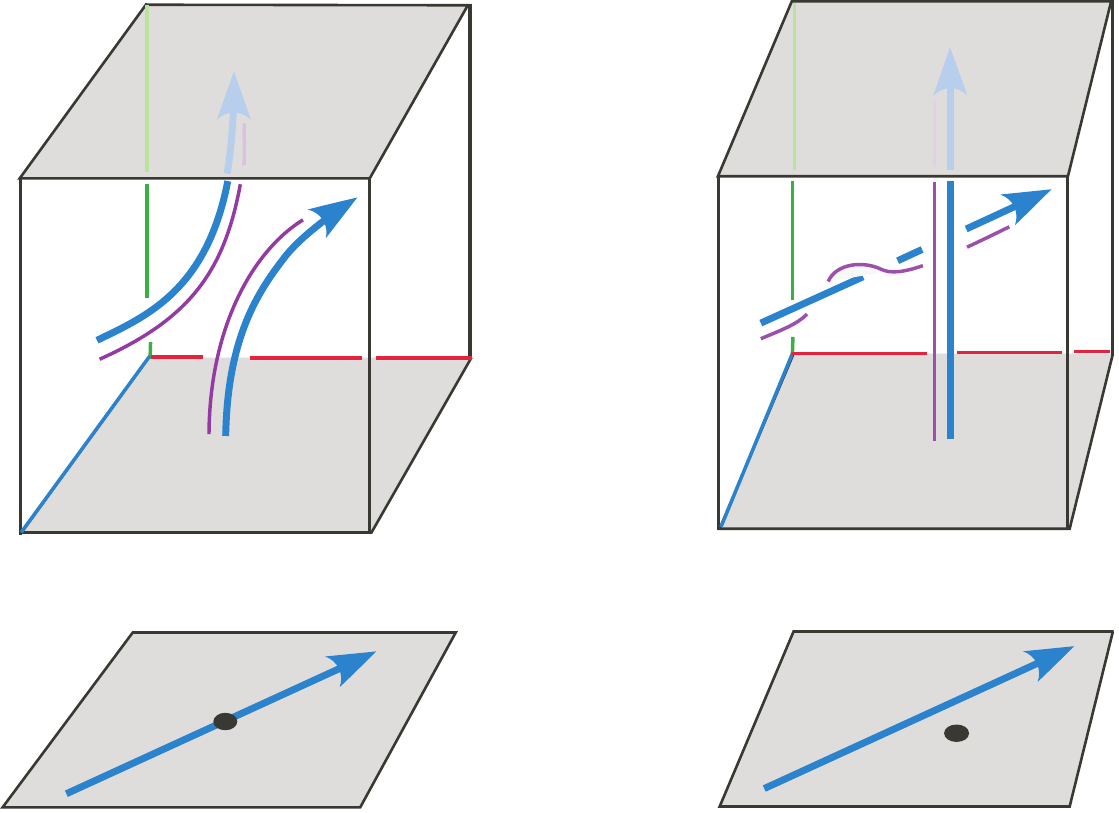}
\put(-102,100){$\sim$}
\put(-134,5){\small$n$}
\put(-4,5){\small $n+1$}
\caption{Using saddles to replace vertical winding by vertical circles}
\label{fig:winding}
\end{figure}

Since our target is the base link $\bl_{pqr}$ (with extra twists), the vertical circles must be gathered together.  This is the purpose of our base point $*$, which will serve as a gathering spot, and the arcs $\gamma_i$, which will serve as the paths in $\torh$ along which to move the vertical circles which at the moment are adjacent to the $\bl_i$.   At each intersection of $\gamma_i$ with a component of $\cald$, these vertical circles must cross the corresponding strand of $\bl$.   This crossing can be accomplished by a pair of saddle bordisms as indicated in \figref{gathering}, adding $\pm2$ to the diagram framing.   On the diagrammatic level, this can viewed as a two step process, running the bordism shown in \figref{winding} backwards and then forwards, in order to move an isolated marked point across a component of $\cald$, as shown at the bottom of the figure. 

\begin{figure}[h!]
\includegraphics[height=150pt]{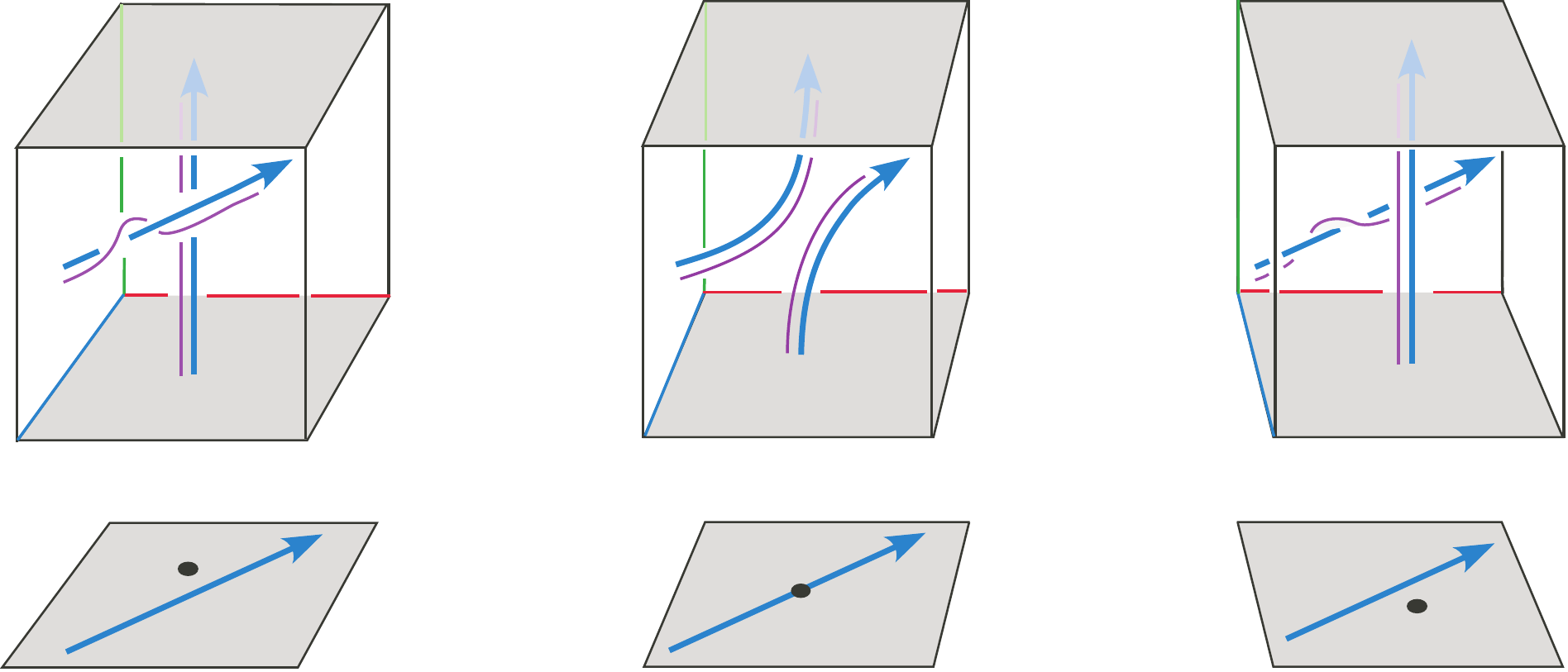}
\put(-240,100){$\sim$}
\put(-110,100){$\sim$}
\put(-275,5){\small$n-1$}
\put(-134,5){\small$n$}
\put(0,5){\small $n+1$}
\caption{Using saddles to gather the vertical circles}
\label{fig:gathering}
\end{figure}

Now it is easy to check that the act of removing the vertical winding of the nonvertical components of $\bl$, and then gathering all the resulting vertical circles (including the original ones corresponding to $\calm$) will add $\sum d_ir_i$ to the total framing of the diagram.  Visually, this process can be thought of as a ``migration" to the base point of all the marked points in the diagram. 

Next use disk bordisms to remove the null homotopic components of $\cald$ (arranging for them to be $0$-framed by storing their twists elsewhere, as explained above) and similarly use annular bordisms to remove any pair of curves that are parallel but oppositely oriented.  The diagram now consists of a $(p,q)$ torus link in $\torh$ (meaning $c$ parallel copies of the $(p',q')$ torus knot, where $p=cp'$ and $q=cq'$ with $p'$ and $q'$ relatively prime) together with $r$ isolated marked points near $*$.  The total framing, which was originally equal to $n$, has changed to $n+\sum d_ir_i$.  

Finally we transform the $(p,q)$ torus link by using saddle bordisms (reversing the process described above for removing crossings) into $p$ copies of $S^1_s$ and $q$ copies of $S^1_t$, grouped as in $\bl_{pqr}$, at the cost of adding $pq$ to the total framing.   Thus we have arrived at $\bl_{pqr}$, having changed the total framing from $n$ to $n+pq + \sum d_ir_i.$  This verifies the stated formula for $\nu(f)$, and completes the proof of \propref{pontryagin}.
\end{proof}

The following application will arise in the proof of \thmref{A} in \secref{A}.

\begin{example}\label{ex:basecase} 

If $f$ is represented by the $\pm(pq+r)$-framed $(p,q)$ torus link in $\torh$, with one component $K$ of vertical winding number $r$, and the rest of vertical winding number zero, then $\lambda(f)  = (p,q,r)$, and choosing a base point adjacent to $K$,
$$
\nu(f) \ = \ (pq+r) + pq + (1\cdot r + 0 + 0) \ = \ 2(pq+r) \ = \ 0 \ \in \bz_{2\gcd(p,q,r)}.
$$
\end{example}


\section{Explicit formulas for the characteristic map} 
\label{sec:characteristic}

In this section we give an explicit diffeomorphism from the Grassmann manifold $\gras$ of oriented $2$-planes through the origin in $4$-space to a product of two $2$-spheres, and then use it to give a formula for the characteristic map $g_L:\tor\to S^2$ of a three-component link $L$ in the $3$-sphere. 

We also describe an alternative form $h_L:\tor\to S^2$ of the characteristic map, homotopic to $g_L$ but more convenient for the proof of  \thmref{A} that we will give in \secref{A}.   The formula for $h_L$ will reveal a close connection between the characteristic map and the classical Gauss map $T^2\to S^2$ for two-component links in $3$-space. 

Throughout we regard $\br^4$ as the algebra of {\itb quaternions}, with orthonormal basis $1,i,j,k$ and unit sphere $S^3$ (oriented so that $i,j,k$ is a positive frame for the tangent space to $S^3$ at the point $1$),  and $\br^3$ as the subspace of {\itb pure imaginary quaternions} spanned by $i$, $j$ and $k$, with unit sphere $S^2$.    Thus $S^3$ is viewed as the multiplicative group of unit quaternions, and $S^2$ as the subset of pure imaginary unit quaternions.  

For any quaternion $q = q_0+q_1i+q_2j+q_3k$, let $\bar q$ denote its conjugate $q_0-q_1i-q_2j-q_3k$, which coincides with $q^{-1}$ when $q\in S^3$, and let $\re(q) = q_0$ and $\Im(q)=q_1i+q_2j+q_3k$ denote its real and imaginary parts.

\part{The Grassmann manifold $\gras$}

It is well known
that $\gras$ can be identified with a product of two $2$-spheres.  In particular, we will use the diffeomorphism
$$
\pi: \gras \longrightarrow S^2\times S^2  
$$
that maps the oriented plane $\pb ab$, spanned by an orthonormal $2$-frame $(a,b)$ in $\br^4$, to the point $(b\bar a,\bar ab)$ in $S^2\times S^2$.  Note that both coordinates 
$$
\pi_+\pb ab \ = \ b\bar a \qquad\text{and}\qquad \pi_-\pb ab \ = \ \bar ab
$$
do in fact lie in $S^2$ since right and left multiplication by $\bar a$ are orthogonal transformations of $\br^4$, carrying the orthonormal frame $(a,b)$ to the orthonormal frames $(1,b\bar a)$ and $(1,\bar ab)$.   

To see that $\pi$ is well-defined, consider any other orthonormal basis for the plane $\pb ab$.  It must be of the form $(ac,bc)$ for some $c$ in the circle subgroup $C_{\bar ab}$ through $\bar ab$, since this group acts on the plane by rotations.   Thus it suffices to check that $bc\,\overline{ac} = b\bar a$, which is immediate, and that $\overline{ac}\, bc = \bar ab$, which is true since $c$ commutes with $\bar ab$.  

To see that $\pi$ is in fact a diffeomorphism, we can simply write down the inverse.  It maps a pair $(x,y)$ in $S^2\times S^2$ to the plane $\pb c{cy}$ where $c$ is the midpoint of any geodesic arc from $x$ to $y$ on $S^2$.   This can be verified by a straightforward calculation using the fact that conjugation by a pure imaginary quaternion rotates the $2$-sphere about that quaternion by $\pi$ radians, so $cy\bar c = x$.

\part{The characteristic map $g_L$} 

Recall from the introduction that the Grassmann map $G:\conf\to\gras$ sends a triple $(x,y,z)$ of distinct points in $S^3$ to the plane they span in $\br^4$, translated to pass through the origin, and oriented so that
$$
G(x,y,z) \ = \ \pb{x-z}{y-z}.
$$  
We have extended notation so that for {\it any} two linearly independent vectors $a$ and $b$ in $\br^4$, the symbol $\pb ab$ denotes the oriented plane they span.  Then, given a three-component link $L$ in $S^3$, its characteristic map $g_L:\tor\to S^2$ is defined using the Grassmann map $G$ and the projection $\pi_+:\gras\to S^2$ by the formula 
$$
g_L(s,t,u) \ = \ \pi_+G(x,y,z) \ = \ \pi_+\pb{x-z}{y-z}
$$
where $x=x(s)$, $y=y(t)$ and $z=z(u)$ parametrize the components of $L$.   

To make this explicit, and to show that using $\pi_-$ in place of $\pi_+$ in the definition would not change the homotopy class of $g_L$, we need expressions for $\pi_\pm\pb ab$ when $a$ and $b$ are arbitrary linearly independent vectors in $\br^4$, but not necessarily orthonormal.  

For example, if $a$ and $b$ are orthogonal, then $b\bar a$ and $\bar ab$ are still pure imaginary, and so need only be normalized to give  $\pi_+\pb ab$ and $\pi_-\pb ab$.   

For a general pair of linearly independent vectors $a$ and $b$, the vector $c = b-(b\dt a/a\dt a)a$, where $\dt$ is the dot product in $\br^4$, is orthogonal to $a$ and satisfies $\pb ac = \pb ab$.  Therefore $\pi_+\pb ab = \pi_+\pb ac$, which equals the unit normalization of the vector $c\bar a = b\bar a - b\dt a$.  But $b\dt a = \re(b\bar a)$, and so $\pi_+\pb ab$ is the unit normalization of $\Im(b\bar a)$.  Similarly $\pi_-\pb ab$ is the unit normalization of $\Im(\bar ab)$.  Therefore, for {\it any} two linearly independent vectors $a$ and $b$, we have  
$$
\pi_\pm\pb ab \ = \ (a,b)_\pm / |(a,b)_\pm|  
$$
where $(\,,\,)_\pm$ are the skew symmetric bilinear forms on $\br^4$ defined by 
$$
(a,b)_+ \ = \ \Im\,b\bar a \qquad\text{and}\qquad (a,b)_- \ = \ \Im\,\bar ab\,.
$$

It follows that $g_L(s,t,u)$ is the unit normalization of the vector
$$
\begin{aligned}
F(x,y,z) \ &= \ (x-z,y-z)_+ \ = \ (x,y)_+ + (y,z)_+ + (z,x)_+ \\
&= \ (ix\dt y + iy\dt z + iz\dt x \ , \  jx\dt y + jy\dt z + jz\dt x \ , \  kx\dt y + ky\dt z + kz\dt x)
\end{aligned}
$$
where the last expression follows from the formula $(a,b)_+ = (ia\dt b, ja\dt b, ka\dt b)$.  This formula is seen as follows.  By definition, $(a,b)_+ = \Im(b\bar a)  =  (i\dt b\bar a)i + (j\dt b\bar a)j + (k\dt b\bar a)k$.  But $q\dt b\bar a = qa\dt b$ for any $q$ (in particular $i$, $j$ or $k$) since right multiplication by the unit quaternion $a/|a|$ is an isometry, and so $(a,b)_+ = (ia\dt b)i + (ja\dt b)j + (ka\dt b)k = (ia\dt b, ja\dt b, ka\dt b)$.
Summarizing, we have shown:
 
\begin{proposition}
\label{prop:char}
\textbf{\boldmath The characteristic map $g_L:\tor\to S^2$ of a three-component link $L$ in $S^3$ is given by the formula
$$
g_L(s,t,u) \ = \ F(x,y,z)/\|F(x,y,z)\|
$$
where $x = x(s)$, $y = y(t)$ and $z = z(u)$ parametrize the components of $L$, and $F:\conf\to\br^3$ is the function defined above.}
\end{proposition}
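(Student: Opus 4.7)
The plan is to assemble the pieces already derived in the paragraphs preceding the statement. By definition, $g_L(s,t,u) = \pi_+ G(x,y,z) = \pi_+\pb{x-z}{y-z}$, and we have shown above that for any linearly independent pair of vectors $a,b\in\br^4$,
$$
\pi_+\pb ab \ = \ (a,b)_+/|(a,b)_+|\,, \qquad (a,b)_+ \ = \ \Im\,b\bar a.
$$
So it suffices to check that $(x-z,y-z)_+$ equals the vector $F(x,y,z)$ given in the statement, and then divide by its norm.

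The first step is to record that $(\,,\,)_+$ is skew-symmetric and bilinear on $\br^4$. Skew-symmetry is immediate from $\overline{b\bar a}=a\bar b$ together with $\Im(\bar q)=-\Im(q)$; bilinearity is inherited from the bilinearity of quaternion multiplication and the linearity of $\Im$. With these two properties in hand, a direct expansion gives
$$
(x-z,y-z)_+ \ = \ (x,y)_+ - (x,z)_+ - (z,y)_+ + (z,z)_+ \ = \ (x,y)_+ + (y,z)_+ + (z,x)_+,
$$
using $(z,z)_+=0$ (from skew-symmetry) and the sign changes $-(x,z)_+=(z,x)_+$, $-(z,y)_+=(y,z)_+$.

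The second step is to convert $(a,b)_+$ into the displayed coordinate form. For any quaternion $q$, we have $q\dt b\bar a = qa\dt b$, because right multiplication by $a/\|a\|$ is an isometry of $\br^4$ (equivalently, right multiplication by $\bar a$ is a conformal linear map with scaling factor $\|a\|$). Applying this with $q=i,j,k$ and using that $(a,b)_+=\Im(b\bar a)$ decomposes in the orthonormal basis $i,j,k$ as $(i\dt b\bar a)\,i+(j\dt b\bar a)\,j+(k\dt b\bar a)\,k$, we obtain
$$
(a,b)_+ \ = \ (ia\dt b)\,i+(ja\dt b)\,j+(ka\dt b)\,k.
$$
Substituting this into the three summands of $(x,y)_+ + (y,z)_+ + (z,x)_+$ yields exactly the three coordinates of $F(x,y,z)$ as written in the proposition.

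The one non-routine point to verify carefully is the identity $q\dt(b\bar a)=(qa)\dt b$ used in the second step; this is the content that ties the quaternionic projection $\pi_+$ to the Euclidean dot product and thus produces the clean polynomial expression for $F$. Once that identity is in place, the remaining manipulations are algebraic bookkeeping. Finally, $\pi_+\pb{x-z}{y-z}$ is well-defined precisely when $x,y,z$ are distinct (so the plane they span is two-dimensional), which is guaranteed by $(x,y,z)\in\conf$; dividing $F(x,y,z)$ by its norm gives the desired formula for $g_L$.
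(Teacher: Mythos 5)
Your proposal is correct and follows essentially the same route as the paper: the paper's ``proof'' of \propref{char} is precisely the derivation preceding its statement, namely computing $\pi_+\pb ab$ as the unit normalization of $(a,b)_+=\Im\,b\bar a$, expanding $(x-z,y-z)_+$ by bilinearity and skew-symmetry, and converting to coordinates via the isometry identity $q\dt b\bar a = qa\dt b$. You have assembled exactly these pieces, with the same key identity singled out, so there is nothing to add.
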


This formula for $g_L$ will be used in our proof of \thmref{B} in Sections \ref{sec:B} and \ref{sec:B'}.  
For \thmref{A} it will be more convenient, for the most part, to use an alternative form of the characteristic map that we introduce next.

\part{The asymmetric characteristic map $h_L$}

Given a three-component link $L$ in the $3$-sphere, we define below an asymmetric version 
$$
h_L:\tor\longrightarrow S^2
$$
of the characteristic map in which the last component of $L$ plays a distinguished role, and show that it is homotopic to the earlier defined symmetric characteristic map $g_L$.  As will be seen, the map $h_L$ can be viewed as a parametrized family of Gauss maps for twisted versions of the first two components of $L$, parametrized by the third component.  

For notational economy, we will use $\norm{q}$ to denote the unit normalization $q/|q|$ of any nonzero quaternion $q$.

The key observation that motivates the definition of $h_L$ is that the Grassmann map $G:\conf\to\gras$ factors up to homotopy through the {\itb Stiefel manifold} $\stiefel$ of orthonormal $2$-frames in $4$-space.  

More precisely, let $\pr_z$ denote {\itb stereographic projection} of $S^3-\{z\}$ onto $z^\perp$, the $3$-plane through the origin in $\br^4$ orthogonal to $z$.  Then we define the {\itb Stiefel map}
$$
H:\conf\longrightarrow\stiefel \quad,\quad (x,y,z) \longmapsto (z,\norm{\pr_zx-\pr_zy})
$$
(recall that the square brackets signify unit normalization), and will show that it is a homotopy equivalence whose composition with the canonical projection 
$$
P:\stiefel\longrightarrow\gras \quad,\quad (z,v)\longmapsto\pb zv
$$
is homotopic to the Grassmann map $G$.  

To see this, first observe that there is a deformation retraction of $\conf$ to its subspace 
$$
V  \ = \ \{(v,-z,z) \st v\perp z\}
$$ 
defined as follows.  Start with $(x,y,z)\in\conf$, and consider the points $\pr_zx$ and $\pr_zy$ in $z^\perp$.   Translation in $z^\perp$ moves $\pr_zy$ to the origin, and then dilation in $z^\perp$ makes the translated $\pr_zx$ into a unit vector.   Conjugating this motion by $\pr_z$ moves $x$ to $\norm{\pr_zx-\pr_zy}$, moves $y$ to $-z$, and leaves $z$ fixed, as pictured in \figref{stfl}, thus defining a deformation retraction of $\conf$ to its subspace $V$, sending $(x,y,z)$ to $(\norm{\pr_zx-\pr_zy},-z,z)$.

\begin{figure}[h!]
\includegraphics[height=150pt]{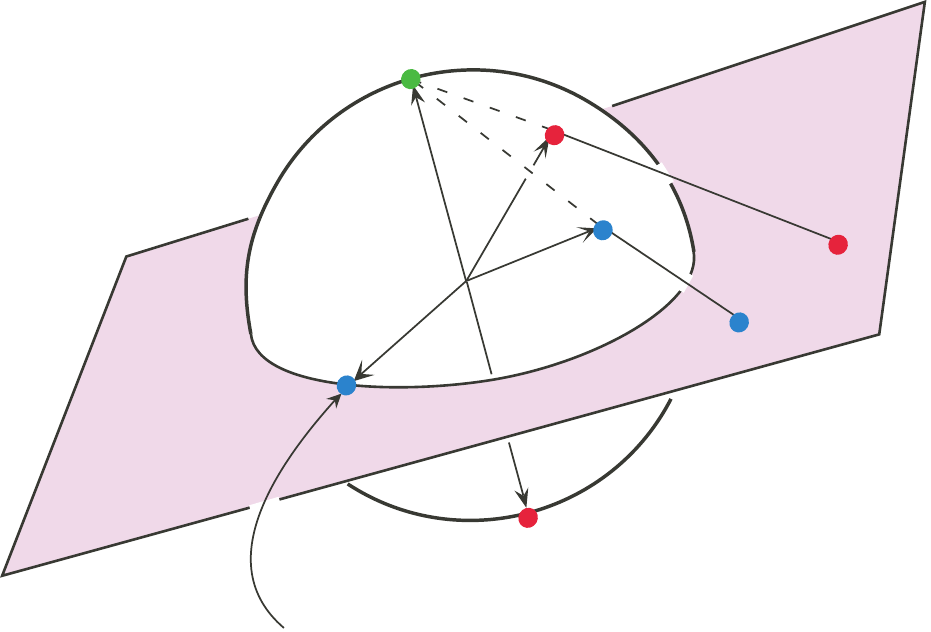}
\put(-127,136){\small$z$}
\put(-105,18){\small$-z$}
\put(-80,103){\small$x$}
\put(-92,123){\small$y$}
\put(-40,72){\small $\pr_zx$}
\put(-30,100){\small $\pr_zy$}
\put(-187,45){$z^\perp$}
\put(-145,90){$S^3$}
\put(-150,0){\small$\norm{\pr_zx-\pr_zy} \ = \ (\pr_zx-\pr_zy)/|\pr_zx-\pr_zy|$}
\caption{The deformation retraction $\conf\to V$}
\label{fig:stfl}
\end{figure}

Identifying $V$ with $\stiefel$ via $(v,-z,z)\leftrightarrow(z,v)$, this shows that $H$ is a homotopy equivalence with homotopy inverse $I$ given by $I(z,v) = (v,-z,z)$.  The calculation 
$$
GI(z,v) \ = \ \pb{v-z}{-2z} \ = \ \pb zv \ = \ P(z,v)\,,
$$
shows that $GI=P$, and so $G\simeq PH$ as claimed.

Now observe that there are two natural homeomorphisms from the Stiefel manifold $\stiefel$ to $S^3\times S^2$ that arise from viewing $S^3$ as the unit quaternions and $S^2$ as the pure imaginary unit quaternions, namely $(z,v) \mapsto (z,v\bar z)$ and $(z,v)\mapsto (z,\bar zv)$.  These yield projections $\pi_\pm:\stiefel\to S^2$ given by 
$$
\pi_+(z,v) \ = \ v\bar z \qquad\text{and}\qquad \pi_-(z,v) =  \bar zv.
$$
These are just the lifts to $\stiefel$ of the previously defined projections $\pi_\pm:\gras\to S^2$ with the same names. 
 
Define the {\itb asymmetric characteristic map} $h_L:\tor\to S^2$ to be the composition $\pi_+He_L$, that is,
$$
h_L(s,t,u) \ = \ \norm{\pr_zx-\pr_zy}\, \bar z 
$$ 
where as usual $x=x(s)$, $y=y(t)$ and $z=z(u)$ parametrize the components of $L$, and the square brackets indicate unit normalization.     

\begin{proposition}
\label{prop:asymmetric}
\textbf{\boldmath The two versions $g_L$ and $h_L$ of the characteristic map of a three-component link $L$ in $S^3$ are homotopic.  Furthermore, these maps are independent, up to homotopy, of the choice of projections $\pi_+$ or $\pi_-$ used in their definitions.}
\end{proposition}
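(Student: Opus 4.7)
The plan is to split the proof into two halves: first to show $g_L \simeq h_L$, then to show that neither map depends up to homotopy on the choice of projection $\pi_+$ or $\pi_-$.

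For the first half, I would invoke the factorization $G \simeq P \circ H$ already established in the preceding discussion via the deformation retraction of $\conf$ onto the submanifold $V \cong \stiefel$. Combined with the identity $\pi_+\circ P = \pi_+$ for the corresponding projections out of $\gras$ and $\stiefel$ noted just before the proposition, postcomposition with $e_L$ and $\pi_+$ immediately gives
$$
g_L \ = \ \pi_+\, G\, e_L \ \simeq \ \pi_+\, P\, H\, e_L \ = \ \pi_+\, H\, e_L \ = \ h_L,
$$
and the identical argument works with $\pi_-$ in place of $\pi_+$.

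For the second half, by the first half it is enough to prove the statement at the level of $\stiefel$, namely $\pi_+\, H\, e_L \simeq \pi_-\, H\, e_L$ as maps $\tor \to S^2$. The key algebraic identity is that, for any $(z, v) \in \stiefel$,
$$
\pi_-(z, v) \ = \ \bar z\, v \ = \ \bar z\,(v\bar z)\,z \ = \ \bar z \, \pi_+(z, v)\, z,
$$
so that $\pi_-$ differs pointwise from $\pi_+$ by conjugation by $\bar z$ on $S^2 \subset \Im\bh$. Viewing the map $q \mapsto (w \mapsto \bar q\, w\, q)$ as the standard double cover $S^3 \to \so(3)$, and setting $z = z(u)$, this reads
$$
\pi_-\, H\, e_L(s,t,u) \ = \ \gamma(u)\bigl(\pi_+\, H\, e_L(s,t,u)\bigr),
$$
where $\gamma : S^1 \to \so(3)$ is the loop $u \mapsto \bigl(w \mapsto \overline{z(u)}\, w\, z(u)\bigr)$.

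The hard part is then to deform $\gamma$ to the constant loop at the identity. But $\gamma$ factors as $S^1 \xrightarrow{u \mapsto \overline{z(u)}} S^3 \to \so(3)$, and since $\pi_1(S^3) = 0$ the first factor is null-homotopic in $S^3$, hence so is $\gamma$ in $\so(3)$. Any null-homotopy $\gamma_\tau : S^1 \to \so(3)$ with $\gamma_0 = \gamma$ and $\gamma_1 \equiv I$ then produces the required homotopy
$$
F_\tau(s,t,u) \ = \ \gamma_\tau(u)\bigl(\pi_+\, H\, e_L(s,t,u)\bigr)
$$
from $\pi_-\, H\, e_L$ to $\pi_+\, H\, e_L$; the analogous independence statement for $g_L$ then follows from the first half. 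The essential subtlety to bear in mind is that $\pi_+$ and $\pi_-$ are \emph{not} homotopic as maps $\stiefel \to S^2$ in general: the distinction is killed only after precomposing with $H \circ e_L$, which meets the $S^3$ factor of $\stiefel$ along the closed curve $Z$, whose null-homotopy in $S^3$ makes the would-be obstruction in $\pi_1(\so(3)) = \bz/2$ vanish.
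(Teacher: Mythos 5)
Your proof is correct, and the first half --- deducing $g_L\simeq h_L$ from $G\simeq PH$ together with $\pi_+P=\pi_+$ --- is exactly the paper's argument. For the second half the paper proceeds differently: it views $\stiefel$ as the unit tangent bundle of $S^3$ and notes that the composition of $He_L$ with the bundle projection to $S^3$ is the loop $u\mapsto z(u)$, which is null-homotopic since $S^3$ is simply connected; hence $He_L$ can be homotoped into the single fiber over $z=1$, where $\pi_+(z,v)=v\bar z$ and $\pi_-(z,v)=\bar zv$ literally coincide, and the two compositions become equal on the nose. You instead keep $He_L$ fixed and move the projection, using the identity $\pi_-(z,v)=\bar z\,\pi_+(z,v)\,z$ to exhibit the discrepancy as a loop $u\mapsto\bigl(w\mapsto\overline{z(u)}\,w\,z(u)\bigr)$ in $\so(3)$, which is null-homotopic because it lifts through the simply connected $S^3$. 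Both arguments hinge on the same geometric fact --- the third component $Z$ traces a null-homotopic loop in $S^3$ --- so they are close cousins, but yours is more explicit about the mechanism, and your closing observation is a genuine bonus: $\pi_+$ and $\pi_-$ are indeed not homotopic on all of $\stiefel$ (restricted to a slice $S^3\times\{w_0\}$ the twisted projection is essentially a Hopf map while the untwisted one is constant), so the precomposition with $He_L$ is doing real work. The paper's version is shorter; yours isolates the obstruction and makes clear exactly when the argument would fail.
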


\begin{proof}
By definition $g_L = \pi_+Ge_L$ and $h_L = \pi_+He_L$, shown in the diagram below as the maps from left to right across the bottom and top, respectively.
\vskip-.1in
\begin{diagram}[size=1.5em]
&&&& \quad \stiefel \quad && \\
&&& \ruTo[leftshortfall=10pt]^H \ && \rdTo[leftshortfall=10pt]^{\pi_+} & \\
\tor\ & \rTo[l>=.5in]_{e_L} & \ \conf \ \ \  && \dTo_P && \ S^2\\
&&& \rdTo[leftshortfall=10pt]_G && \ \ruTo[leftshortfall=7pt]_{\pi_+}[rightshortfall=10pt] & \\
&&&& \quad \gras \quad && \\
\end{diagram}
\vskip.1in
\noindent Here $e_L(s,t,u)=(x(s),y(t),z(u))$ records the parametrization of the link, and $G$ and $H$ are the Grassmann and Stiefel maps with their associated projections $\pi_+$.  It was shown above that the left triangle in the diagram commutes up to homotopy, and the right triangle commutes on the nose.  Therefore $h_L$ and $g_L$ are homotopic.


Now the same argument shows that the maps $h_L' = \pi_-He_L$ and $g_L' =  \pi_-Ge_L$ are homotopic.   Hence to complete the proof, it suffices to show that $h_L$ and $h_L'$ are homotopic.  To do so, view $\stiefel$ as the unit tangent bundle of $S^3$, with projection $p$ to the base $S^3$ given by $p(z,v) = z$.  Then the composition $pHe_L$ is null-homotopic, since it maps onto the third component of $L$, and so the map $He_L$ is homotopic to a map into any $S^2$-fiber of the bundle $\stiefel\to S^3$.  We choose the fiber over $z=1$, where the two projections $\pi_+(z,v) = v\zbar$ and $\pi_-(z,v) = \zbar v$ coincide.  It follows that $h_L$ and $h_L'$ are homotopic. 
\end{proof}

\part{A Gaussian view of the asymmetric characteristic map}

The formula above for $h_L$ involves first normalizing a vector in $z^\perp$, and then multiplying by $\zbar$ to move it to the unit sphere $S^2$ in the pure imaginary quaternion $3$-space $\br^3$.  We would like to express this directly as the normalization of a vector in $\br^3$.  

A geometric argument shows that stereographic projection $\pr_a$ is given by 
$$
\pr_ab \ = \ (\Im\,b\bar a)a/(1-\re\,b\bar a)
$$
and it follows that $(\pr_ab)c  = \pr_{ac}\,bc$ for any three unit quaternions $a$, $b$ and $c$ with $a\ne b$.  Hence the formula $h_L(s,t,u) = \norm{\pr_zx-\pr_zy}\, \bar z$ can be rewritten as
$$
\begin{aligned}
h_L(s,t,u) \ &= \ \norm{\pr_1\,x\zbar - \pr_1\,y\zbar} \\
&= \ \norm{\pr_{-1}(-y\zbar) - \pr_{-1}(-x\zbar)}
\end{aligned}
$$
where, as usual, $x=x(s)$, $y=y(t)$ and $z=z(u)$ parametrize $L$, and the square brackets indicate unit normalization in $\br^3$.  We favor the last expression because stereographic projection from $-1$ is orientation-preserving, while from $1$ it is orientation-reversing.  

Now for any two distinct unit quaternions $a$ and $z$, introduce the abbreviation
$$
a_z \ = \ \pr_{-1}(-a\zbar) \ \in \ \br^3\,,
$$
and so in particular $a_{-1} = \pr_{-1}\,a$.  Then we can write
$$
h_L(s,t,u) \ = \ \norm{y_z-x_z}\,.
$$
For fixed $u$, this is just the classical Gauss map for the two-component link $X_z\cup Y_z \subset \br^3$ that is the image of the first two components $X\cup Y$ of $L$ under the map $a\mapsto a_z$.  
Thus the asymmetric characteristic map $h_L$ can be viewed as a one-parameter family of Gauss maps for images in $3$-space of $X\cup Y$.  The third component $Z$ provides the parameter and determines the axes about which these images are gradually twisted.

In the next section, we will explain this perspective more carefully.  But we can see right now that it yields an easy proof of the first part of \thmref{A}, equating the pairwise linking numbers of the link $L$ with the degrees of the restriction of its characteristic map $g_L$ to the coordinate $2$-tori.

\part{Proof of the first statement in \thmref{A}}

It can be arranged by an isotopy of $L$ that $z(0) = -1$.  Then the restriction of $h_L$ to the coordinate $2$-torus $S^1\times S^1\times 0$ is precisely the Gauss map of the stereographic image $X_{-1}\cup Y_{-1}$ of $X\cup Y$, whose degree is equal to the linking number $\lk(X, Y )$ since $\pr_{-1}$ is orientation-preserving. Since $g_L$ is homotopic to $h_L$, the same is true for $g_L$.  But then it follows from the symmetry of $g_L$ that $\lk(X,Z)$ and $\lk(Y,Z)$ are given by the degrees of $g_L$ on $S^1\times 0\times S^1$ and on $0 \times S^1\times S^1$, respectively. 

The proof of the second statement in \thmref{A} -- which relates the triple linking number of $L$ to the Pontryagin $\nu$-invariant of its characteristic map -- is more delicate.  It will occupy us for the next two sections.

\section{The Pontryagin $\nu$-invariant of the characteristic map} 
\label{sec:Aprep}

Fix  a link $L$ in $S^3$ with three components $X$, $Y$ and $Z$ parametrized by $x=x(s)$, $y=y(t)$ and $z=z(u)$.  Recalling that $S^3$ is regarded as the group of unit quaternions, the asymmetric characteristic map $h_L:\tor\to S^2$ is given by
$$
h_L(s,t,u) \ = \ (y_z-x_z)/|y_z-x_z|
$$
where $a_z$ is an abbreviation for the vector $\pr_{-1}(-a\zbar)$ in pure imaginary quaternion $3\text{-space}$ $\br^3$.   To carry out the proof of \thmref{A}, we need a way to compute the absolute Pontryagin $\nu$-invariant  of $h_L$.  A procedure for doing so is described here.

Throughout this section and the next, $\br^3$ is pictured in the usual way, with the $ij$-plane horizontal and the $k$-axis pointing straight up as in \figref{quat}.  In particular, we view $k$ as the north pole of the unit sphere $S^2$.  

\begin{figure}[h!]
\includegraphics[height=150pt]{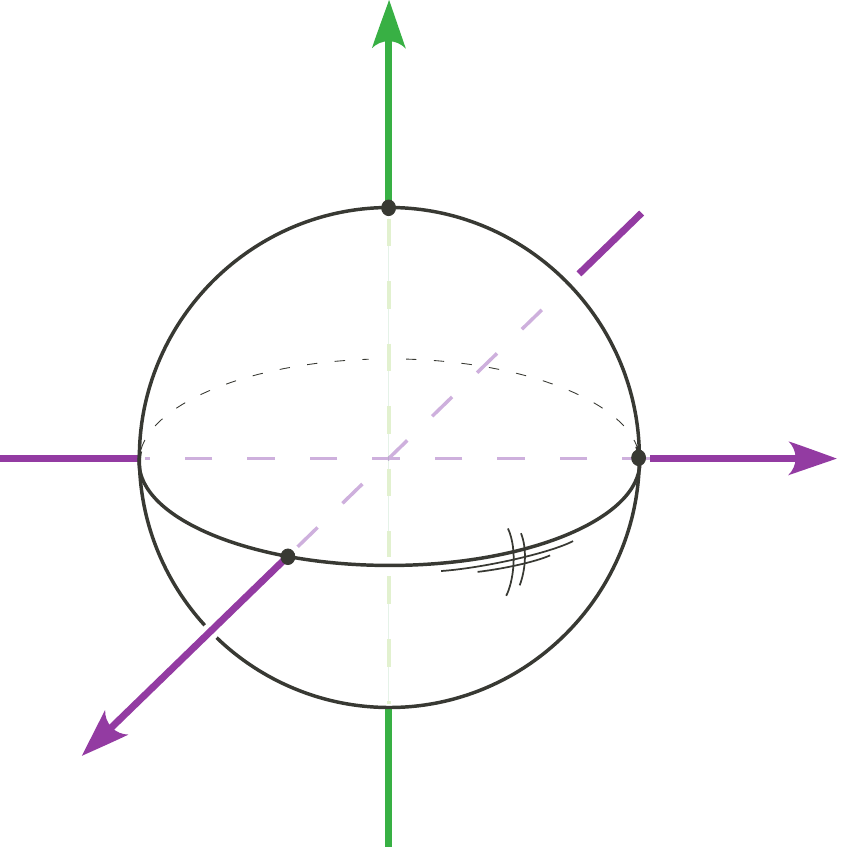}
\put(-99,40){\small$i$}
\put(-33,58){\small$j$}
\put(-89,117){\small$k$}
\caption{Pure imaginary quaternion $3$-space}
\label{fig:quat}
\end{figure}

\part{Outline of the procedure for computing $\nu(h_L)$}

First, we will manipulate $L$ by a link homotopy into a favorable position -- referred to as {\itb generic} below -- so that, in particular, the north pole $k$ of $S^2$ is a regular value of $h_L$.  

Then we will determine the associated {\itb Pontryagin link}  
$$
\bl \ = \ h_L^{-1}(k) \ \subset \ \tor
$$
by giving a simple method for constructing a toral diagram for it, in the sense of \secref{pontryagin}.  Roughly speaking, our approach is as follows.  

By definition of $h_L$, the Pontryagin link $\bl$ consists of all $(s,t,u)\in\tor$ for which the vector in $\br^3$ from $x_z$ to $y_z$ points straight up, where $x=x(s)$, $y=y(t)$ and $z=z(u)$.   The genericity of $L$ will imply that for some points $(s,t)$ in the $2$-torus $\torh$ there is a {\it unique} $u = u(s,t)\in S^1$ that will make this happen, while for all other points $(s,t)$, {\it no} $u$ will work.  Furthermore, the set $\cald$ of all points of the first kind, called {\itb isogonal points} for reasons explained below, is a smooth $1$-dimensional  submanifold of $T^2$ whose components we call {\itb icycles}.  

Thus 
$$
\bl \ = \ \{(s,t,u)\in\tor \st (s,t)\in\cald \text{ and } u=u(s,t)\}\,,
$$
the graph of the function $u(s,t)$ over the collection $\cald$ of icycles in $T^2$.   When suitably oriented and decorated with framing and vertical winding numbers, $\cald$ will be the desired toral diagram of $\bl$.

In particular, the icycles in $\cald$ correspond to certain oriented cycles of vectors directed from $X$ to $Y$, which we call {\itb bicycles}, that are easily spotted from a picture of $L$.  Each bicycle has a {\itb longitudinal} and {\itb meridional degree}, recording how much it turns and spins relative to the standard open book structure on $S^3$.  The framing and vertical winding number of the corresponding icycle are determined by these degrees. 

Therefore a diagram for $\bl$ can be constructed once we identify the bicycles in $L$.  With this diagram in hand, the methods of \secref{pontryagin} can then be used to compute $\nu(h_L)$.

We now give the details of this procedure.  There are three 
geometries involved: spherical geometry in $S^3$,  where the link  $L$ lives, Euclidean geometry in $\br^3$,  the setting for the asymmetric characteristic map, and hyperbolic geometry in the complex upper half plane $\bh$,  which turns out to be for us the natural geometry on the pages of the ``standard" open book decomposition of $\br^3$.  We begin with an explicit construction of this open book, which provides a framework for the discussion that follows.  

\part{The standard open books in $\br^3$ and $S^3$}

Consider the great circle $K$ in $S^3$ through $1$ and $k$, and the orthogonal great circle $C$ through $i$ and $j$.  Orient both circles by left complex multiplication by $K$ (i.e.\ from $1$ toward $k$ on $K$, and $i$ toward $j$ on $C$) so that their linking number is $+1$.

Stereographic projection from $-1$ carries $K$ 
onto the $k$-axis in $\br^3$, and fixes $C$, which now appears as the unit circle in the $ij$-plane.  The complement $\V$ of the $k$-axis in $\br^3$ is naturally identified with the product of a circle $S^1$ (viewed as the quotient $\br/2\pi\bz$) with the complex upper half plane $\bh$ (which for later purposes will be viewed as the hyperbolic plane) via the diffeomorphism
$$
\V \ \longrightarrow \ S^1 \times \bh
$$ 
with coordinates \,$\ell: \V\longrightarrow S^1$\, and \,$m: \V\longrightarrow\bh$\, given by 
$$
\ell(q) \ = \ \arg(q_1+q_2i)\qquad\text{and}\qquad m(q) \ = \ q_3 + |q_1+q_2i|i\,.
$$
for $q=q_1i+q_2j+q_3k\in \V =  \br^3-k\text{-axis}$.  In other words, if $q=(r,\theta,z)$ in cylindrical coordinates, then $\ell(q) = \theta$ and $m(q) = z+ri$.  We call $\ell$ and $m$ the {\itb longitudinal} and {\itb meridional projections} in $\br^3$, and refer to $\ell(q)$ as the {\itb polar angle} of $q$.  

The longitudinal projection defines the {\itb standard open book} in $\br^3$, with binding the $k$-axis, and with pages $P_\theta = \ell^{-1}(\theta)$ for $\theta\in S^1$.  The pages are just the oriented vertical half-planes bounded by the $k$-axis, each indexed by its polar angle as shown in \figref{book}.  The meridional projection serves to identify each page $P_\theta$ with the hyperbolic plane $\bh$, with ``center" \,$i_\theta = i\,\cos\theta + j\,\sin\theta$\, corresponding to $i$.  The union of all the page centers is the unit circle $C$.

\begin{figure}[h!]
\includegraphics[height=170pt]{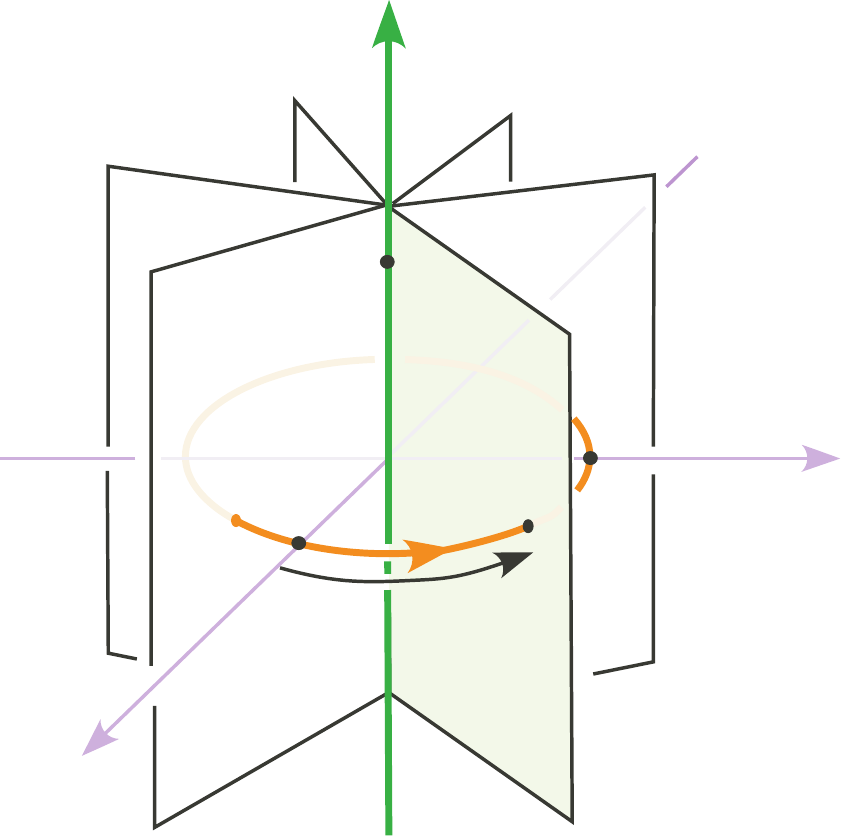}
\put(-113,63){\small$i$}
\put(-64,66){\small$i_\theta$}
\put(-49,82){\small$j$}
\put(-89,114){\small$k$}
\put(-90,43){\small$\theta$}
\put(-107,-10){binding}
\put(-145,142){pages}
\put(-53,10){$P_\theta$}
\put(-133,62){$C$}
\caption{The standard open book in $\br^3$}
\label{fig:book}
\end{figure}

\break

The longitudinal and meridional projections  in $\br^3$ lift to $S^3$ \label{longproj} (in the complement of $K$) by composing with stereographic projection from $-1$.  Relying on the context, we continue to denote them by $\ell$ and $m$, and to refer to $\ell(q)$ as the polar angle of $q$.  Explicitly
$$
\ell(q) \ = \ \arg(q_1+q_2i)\qquad\text{and}\qquad m(q) \ = \ \frac{q_3 + |q_1+q_2i|i}{1+q_0}\,.
$$
for $q = q_0+q_1i+q_2j+q_3k\in S^3-K$.  Here we are using 
the formula 
$$
\pr_{-1}q \ =  \ (\Im q) / (1 + \re q)  =  (q_1 i + q_2 j + q_3 k) / (1 + q_0)\,,
$$
and then plugging this into the formula above for the longitudinal and meridional projections in $\br^3$.  

Just as in $\br^3$, the longitudinal projection in $S^3$ defines the {\itb standard open book} in $S^3$, with binding $K$, and with pages $H_\theta = \ell^{-1}(\theta)$.  The pages are now open great hemispheres  in $S^3$ with centers (i.e.\ poles) $i_\theta$ along the great circle $C$, and with $K$ as equator.  By design, $\pr_{-1}$ maps each hemispherical page $H_\theta$ in $S^3$ onto the corresponding half-planar page $P_\theta$ in $\br^3$.

We now turn our attention back to links in the $3$-sphere.

\part{Generic links}

A three-component link $L=X\,\cup\,Y\,\cup\,Z$ in $S^3$ is {\itb generic} if

\vspace{.02in}
\begin{indentation}{.05em}{1em}
\begin{enumerate}
\item $Z$ coincides with the oriented binding $K$ of the standard open book, and
\smallskip
\item $X$ and $Y$ wind ``generically" around $Z$.
\end{enumerate}  
\end{indentation}
\vspace{.02in}

More precisely, the second condition requires the restriction to $X\,\cup\,Y$ of the longitudinal projection \,$\ell:S^3-K\longrightarrow S^1$, which sends each point to its polar angle, to be a Morse function with just one critical point per critical value.  Geometrically, this means $X$ and $Y$ are transverse to the pages of the standard open book in $S^3$, except for finitely many pages where exactly one of them turns around at a single point.   These will be called the {\itb critical points} of $L$, while all other points on $X\cup Y$ will be called {\itb regular points}.   Each regular point $w$ has a sign, denoted {\bf\boldmath $\sgn(w)$}, when viewed as an intersection point of $L$ with the page containing $w$.  Thus $\sgn(w) = +1$ or $-1$ according to whether $L$ is oriented in the direction of increasing or decreasing polar angle near $w$.  

An example of a generic link is shown in \figref{generic}.  It has four critical points, two on $X$ and two on $Y$, indicated by dots in the picture.

\begin{figure}[h!]
\includegraphics[height=150pt]{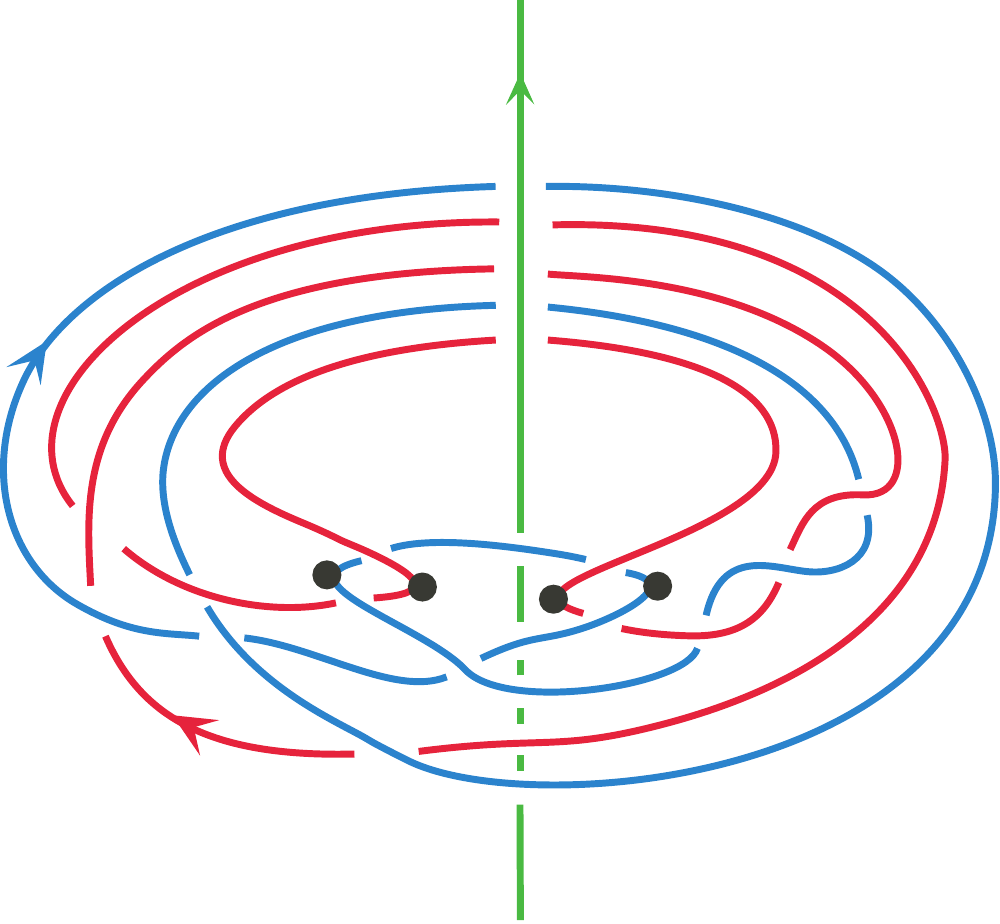}
\put(-150,110){$X$}
\put(-152,30){$Y$}
\put(-90,138){$Z$}
\caption{A generic link}
\label{fig:generic}
\end{figure}

Any three-component link in $S^3$ is evidently link homotopic to a generic one:  first unknot the last component by a link homotopy and move it to coincide with $K$, and then adjust the first two components by a small isotopy to satisfy the genericity condition (2). 

\smallskip

{\it From this point on we assume that $L$ is generic without further mention.}

\break

As a consequence, we will show the following:

\vspace{.02in}
\begin{indentation}{.05em}{1em}
\begin{enumerate}
\item[(a)] The north pole $k\in S^2$ is a regular value of the characteristic map $h_L$.
\smallskip
\item[(b)] There is a simple method for constructing a toral diagram for the associated Pontryagin link $\bl = h_L^{-1}(k)$ from a picture of $L$.
\end{enumerate}  
\end{indentation}
\vspace{.02in} 

This is the content of the ``bicycle theorem" below.  To state it precisely, we need to introduce the key notion of a {\it bicycle} in $L$, and its associated {\it icycle} in $T^2$.

\part{Bicycles and icycles}

Assume, as always, that the components $X$, $Y$ and $Z$ of $L$ are parametrized by smooth functions $x=x(s)$, $y=y(t)$ and $z=z(u)$ with nowhere vanishing derivatives.   In particular, points $(s,t)$ in the $2$-torus parametrize pairs of points $x\in X$ and $y\in Y$.  Suppose that $x$ and $y$ have the same polar angle $\theta$, or equivalently that they lie in a common page $H_\theta$ of the standard open book in $S^3$.  Then we call $(s,t)$ an {\itb isogonal point} in $T^2$, and call $(x,y)$ a {\itb page vector} in $L$ with polar angle $\theta$, reflecting the fact that the vector in $\br^3$ from $\pr_{-1}(x)$ to $\pr_{-1}(y)$ lies entirely on the half-planar page $P_\theta$.  

A page vector $(x,y)$ will be called {\itb critical} if $x$ or $y$ is a critical point of $L$, and {\itb regular} if both $x$ and $y$ are regular.  A regular page vector is {\itb positive} if the oriented strands of $L$ through $x$ and $y$ point in the same direction, meaning $\sgn(x) = \sgn(y)$, and is {\itb negative} if they point in opposite directions.  These notions are illustrated in \figref{pagevectors}, in which the vectors labeled $1$ and $2$ are positive regular page vectors, $3$ is negative regular, and $4$ is critical.

\begin{figure}[h!]
\includegraphics[height=170pt]{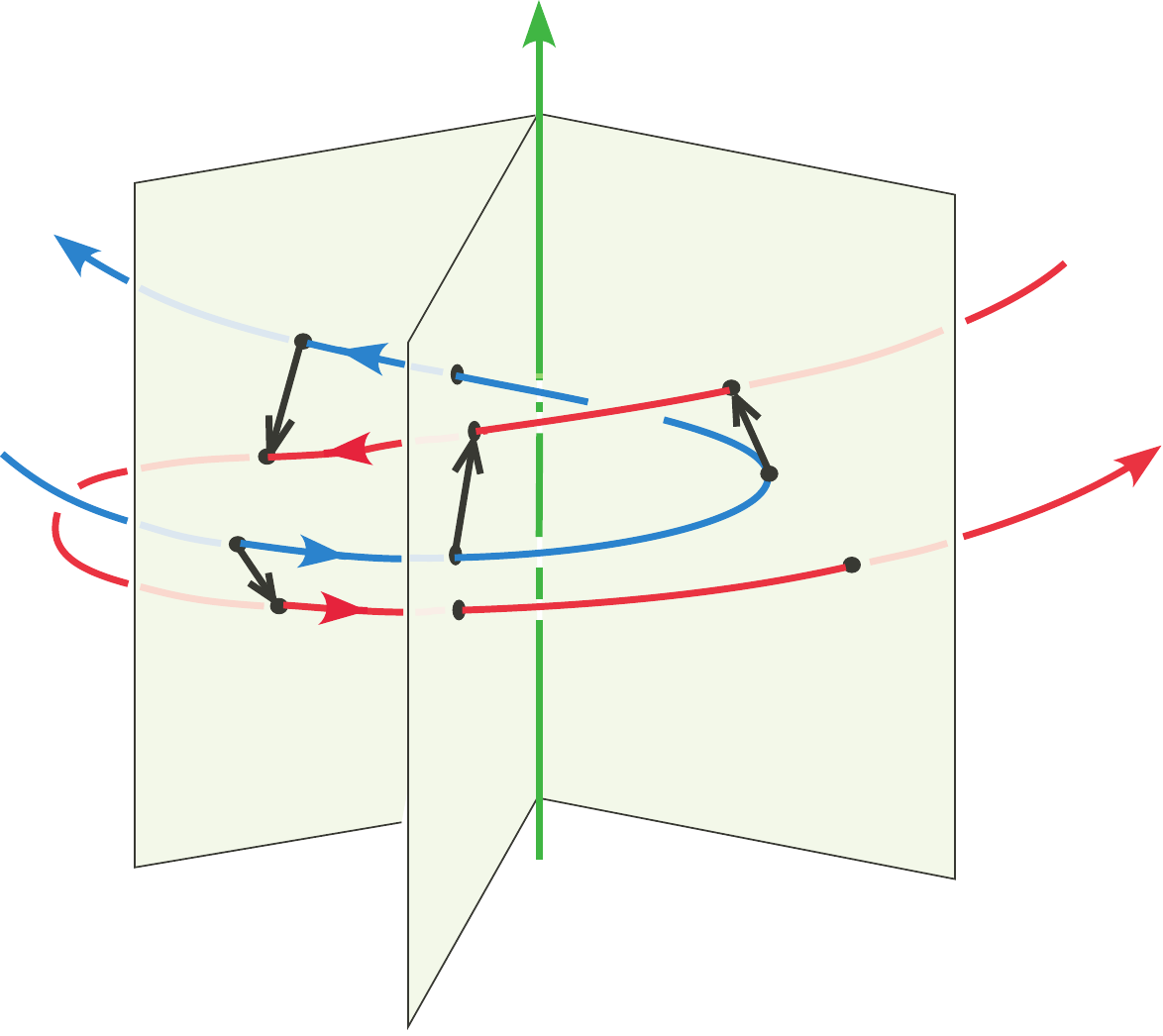}
\put(-150,105){\tiny$1$}
\put(-156,73){\tiny$2$}
\put(-112,85){\tiny$3$}
\put(-65,98){\tiny$4$}
\put(-185,105){$X$}
\put(-20,102){$Y$}
\put(-106,18){$Z$}
\caption{Page vectors}
\label{fig:pagevectors}
\end{figure}

Now consider the spaces
$$
\cald \ = \ \{\text{isogonal points in $T^2$}\} \qquad\text{and}\qquad \calp \ = \ \{\text{page vectors in $L$}\} \,.
$$
By definition $\cald$ parametrizes $\calp$.  The genericity of $L$ implies that $\calp$ consists of a finite number of disjoint cycles of page vectors, and that $\cald$ consists of a finite collection of smooth simple closed curves in $T^2$.   Orient $\calp$ so that it points to the right (meaning in the direction of increasing polar angle) at each positive regular page vector in it, and to the left at each negative regular page vector.  This gives a well-defined orientation on $\calp$, inducing one on $\cald$ as well.  We call these the {\itb preferred orientations} on $\calp$ and $\cald$.     

\begin{definition}
A {\itb bicycle} (or ``bi-cycle")  in $L$ is a component $\calp_i$ of $\calp$, that is, an oriented cycle of page vectors.  Each bicycle is parametrized by a component $\cald_i$ of $\cald$, which we call its associated {\itb icycle} (or ``i-cycle").
\end{definition}

\part{Some examples of bicycles and their associated icycles}

We first draw in \figref{local} four local pictures of a bicycle near a regular page vector, and below them, their parametrizing icycles.  It is understood that these pictures take place somewhere in front of the upward pointing $Z$ axis. The four cases represent the possible directions of  $X$ and $Y$ relative to the page containing the vector.  In each case, the orientation of the bicycle is indicated by a squiggly arrow.
\begin{figure}[h!]
\includegraphics[height=175pt]{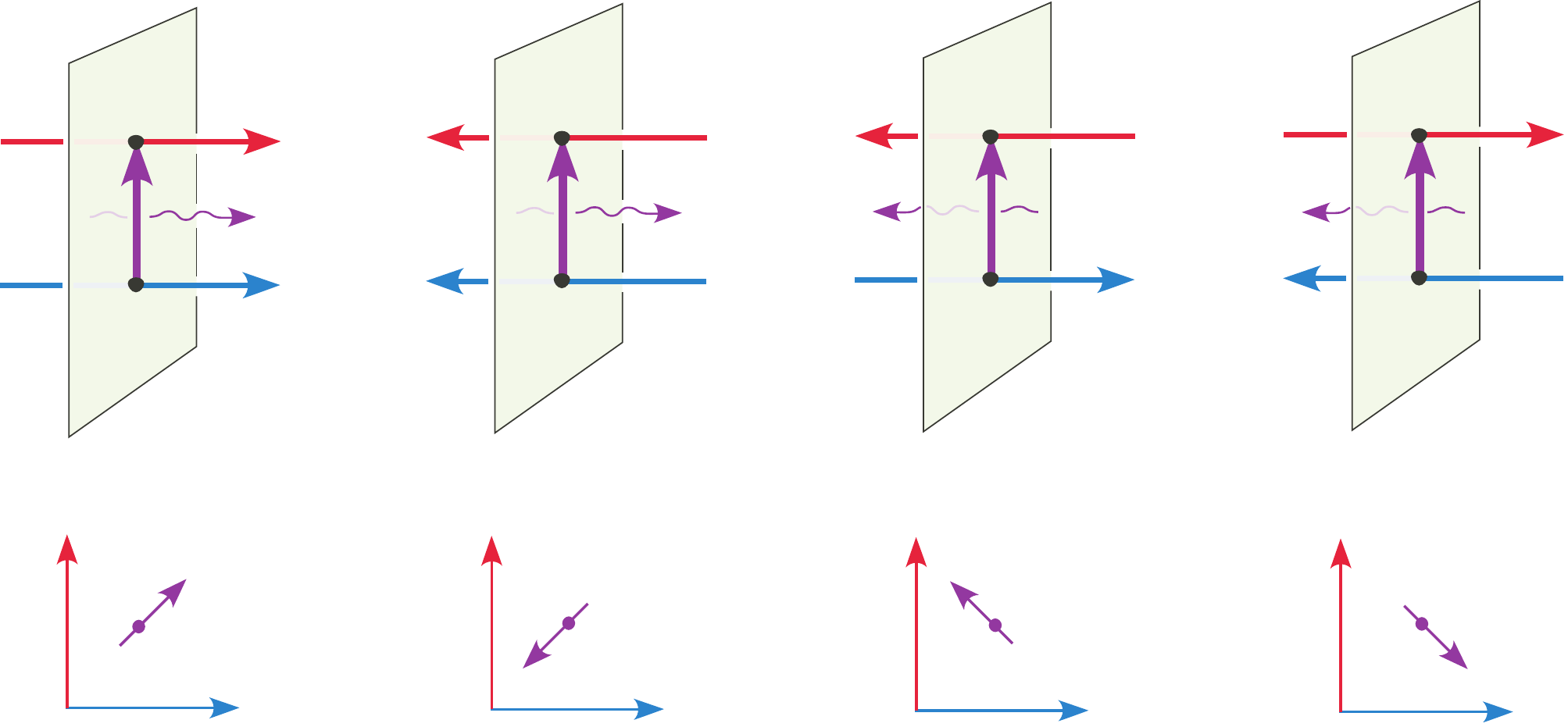}
\put(-374,146){\small$Y$}
\put(-376,96){\small$X$}
\put(-215,147){\small$Y$}
\put(-217,96){\small$X$}
\put(-115,147){\small$Y$}
\put(-171,96){\small$X$}
\put(-65,147){\small$Y$}
\put(-13,96){\small$X$}
\put(-318,3){\scriptsize$s$}
\put(-363,50){\scriptsize $t$}
\put(-216,3){\scriptsize$s$}
\put(-261,50){\scriptsize $t$}
\put(-114,2){\scriptsize$s$}
\put(-159,49){\scriptsize $t$}
\put(-12,2){\scriptsize$s$}
\put(-57,49){\scriptsize $t$}
\caption{Local pictures of a bicycle and its corresponding icycle}
\label{fig:local}
\end{figure}

For an example of a full bicycle, consider the ``clasp" between $X$ and $Y$ pictured in \figref{claspbike}(a).  This gives rise to the bicycle in \figref{claspbike}(b), passing successively through the vectors labeled $1,2,3,4$ and then back to $1$.  The associated icycle is a counterclockwise circle in $T^2$, shown in \figref{claspbike}(c).  The route taken by this bicycle is ``short" in the sense that it does not wind around the binding, although it does spin within the pages.

\begin{figure}[h!]
\includegraphics[height=80pt]{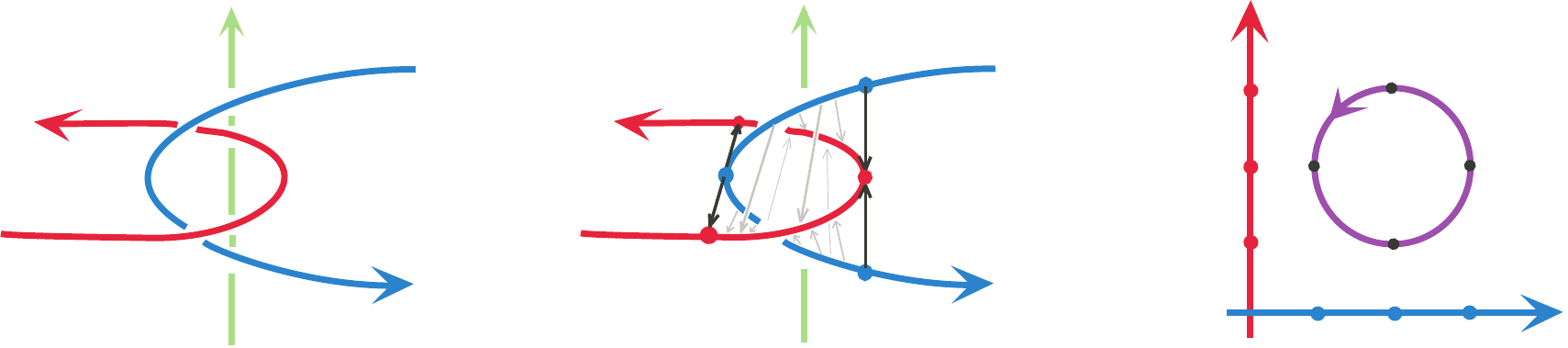}
\put(-280,53){\small$X$}
\put(-350,15){\small$Y$}
\put(-325,66){\small$Z$}
\put(-198,45){\tiny$1$}
\put(-160,51){\tiny$2$}
\put(-202,33){\tiny$3$}
\put(-160,25){\tiny$4$}
\put(2,6){\small$s$}
\put(-82,73){\small$t$}
\put(-42,64){\tiny$1$}
\put(-65,40){\tiny$2$}
\put(-42,17){\tiny$3$}
\put(-19,40){\tiny$4$}
\put(-340,-20){(a) the clasp}
\put(-215,-20){(b) the bicycle}
\put(-80,-20){(c) the icycle}
\caption{Bicycle arising from a clasp}
\label{fig:claspbike}
\end{figure}

As another example, the link shown in \figref{generic} and reproduced in \figref{icycles}(a) below has three bicycles.  Two of them are short, arising from the clasps as in the previous example, while the remaining long one oscillates back and forth in the longitudinal direction, eventually making one full revolution around the binding.  It is an instructive exercise left to the reader to recover the plot of the associated icycles in \figref{icycles}(b), in which the trivial circles labeled $A$ and $B$ correspond to the clasps in $L$ with the same labels, $C$ labels the icycle that parametrizes the long bicycle, and the corners of the square parametrize the pair $(x,y)$ indicated by the dots in \figref{icycles}(a).   

\begin{figure}[h!]
\includegraphics[height=125pt]{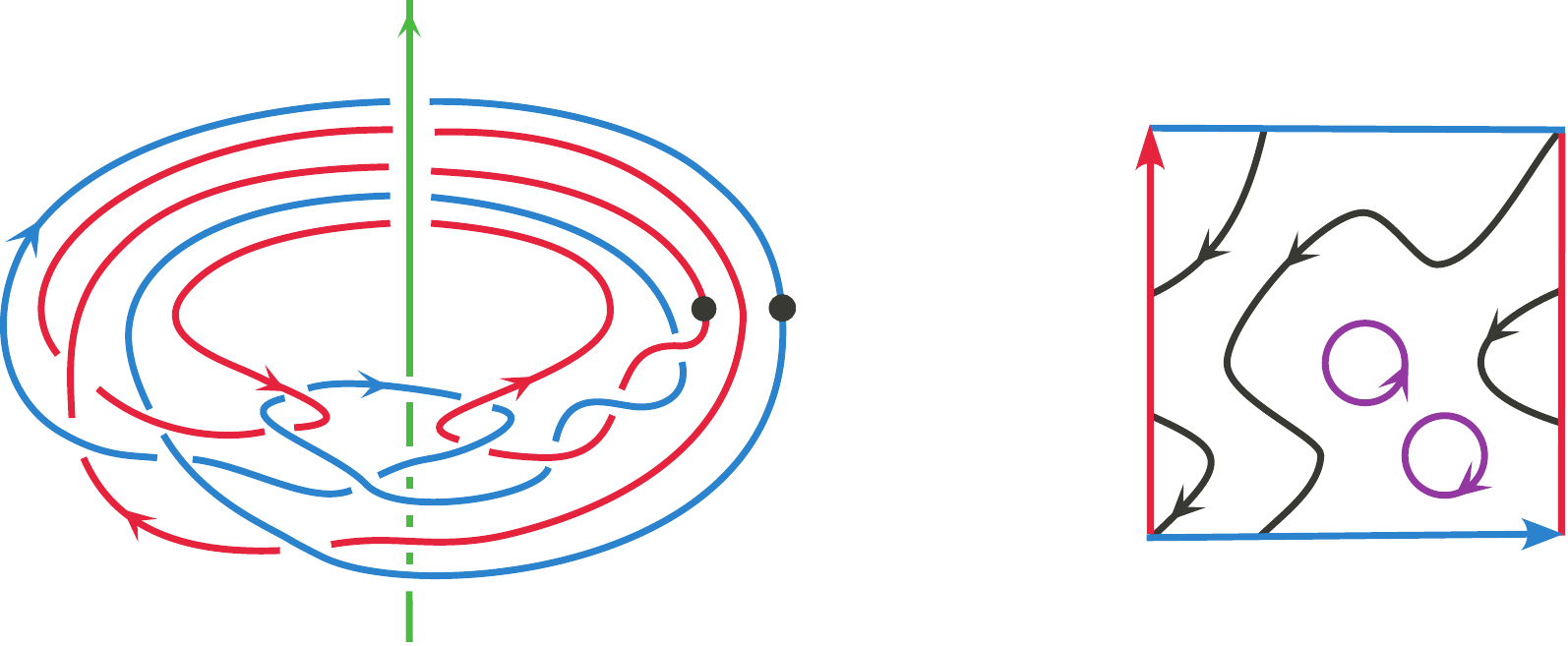}
\put(2,20){\small$s$}
\put(-83,104){\small$t$}
\put(-300,90){\small$X$}
\put(-295,20){\small$Y$}
\put(-238,115){\small$Z$}
\put(-248,54){\tiny$A$}
\put(-215,52){\tiny$B$}
\put(-44,53){\tiny$A$}
\put(-29,36){\tiny$B$}
\put(-27,80){\tiny$C$}
\put(-280,-15){(a) the generic link $L$}
\put(-90,-15){(b) the icycles of $L$}
\caption{A generic link and its icycles}
\label{fig:icycles}
\end{figure}

Before proceeding, we remind the reader that our interest in icycles associated to $L$ stems from the fact that -- when suitably decorated -- they give a toral diagram for a Pontryagin link of the characteristic map $h_L$.   This is the content of the bicycle theorem.  

\part{The Bicycle Theorem}

The longitudinal and meridional projections on $S^3-K$, defined earlier,
induce projections by the same name on the space $\calp$ of page vectors,
$$
S^1 \, \overset{\ell}{\longleftarrow} \, \calp \, \overset{m}{\longrightarrow} \, \bh \,,
$$
given by $\ell(x,y) = \ell(x) = \ell(y)$ and $m(x,y) = \arg(m(y)-m(x))$.  In other words $\ell(x,y)$ is the polar angle that parametrizes the common hemispherical page in $S^3$ containing $x$ and $y$, and $m(x,y)$ is the argument of the vector from $m(x)$ to $m(y)$ in 
$\bh$.  

Using these projections, we define the {\itb longitudinal} and {\itb meridional degrees} of a bicycle $\calp_i$ in $L$ by  
$$
\ell_i \ = \ \deg(\ell|\calp_i) \qquad\text{and}\qquad  m_i \ = \ \deg(m|\calp_i). \label{degrees}
$$
These integer invariants record, respectively, the number of times $\calp_i$ travels around the binding $Z$, and the number of times its vectors spin around in the pages as it goes.   

For example, any bicycle arising from a clasp between $X$ and $Y$ has zero longitudinal degree, while its meridional degree can be $\pm1$.  In particular, the one shown in \figref{claspbike} has meridional degree $-1$, while the ones labeled $A$ and $B$ in \figref{icycles}(a) have degrees $1$ and $-1$, respectively.  The long bicycle in \figref{icycles}(a), labeled $C$, has longitudinal degree $2$ and meridional degree $1$.  

For any icycle $\cald_i$ in $T^2$, parametrizing a bicycle $\calp_i$ in $L$, define the {\itb framing} $n_i$ and {\itb vertical winding number} $r_i$ of $\cald_i$ by
$$
n_i \ = \ -\ell_i-m_i \qquad\text{and}\qquad r_i \ = \ m_i 
$$
where $\ell_i$ and $m_i$ are the longitudinal and meridional degrees of $\calp_i$.

We can now state the main result of this section.

\begin{theorem*}\label{thm:bicycle}
\textbf{\boldmath Let $L$ be a generic link in $S^3$.  Then
\begin{indentation}{-.2em}{1.2em}
\begin{itemize}
\item[(a)] The north pole $k\in S^2$ is a regular value of $h_L:\tor\to S^2$.
\smallskip  
\item[(b)] The collection $\cald$ of icycles of $L$, together with their framings and vertical winding numbers as defined above, forms a toral diagram for the associated Pontryagin link $\bl = h_L^{-1}(k)$.
\end{itemize}
\end{indentation}}
\end{theorem*}

Before proving this theorem, we illustrate how it can be used to compute the Pontryagin invariant of the characteristic map of a generic link.

\part{Computing $\nu(h_L)$ for a generic link $L$ using the bicycle theorem}

As a first example, again consider the link $L$ pictured in \figref{icycles}(a).   As noted above, it has three bicyles $A$, $B$, $C$, with longitudinal degrees $0,0,2$, meridional degrees $1,-1,1$, and so by definition, framings $-1,1,-3$ and vertical winding numbers $1,-1,1$.   

By the bicycle theorem, the Pontryagin link for the characteristic map $h_L$ has toral diagram as shown in \figref{icycles}(b) with vertical winding numbers $1$, $-1$ and $1$ on the icycles $A$, $B$ and $C$, and with global framing $n=-1+1-3=-3$.  Thus the total vertical winding number is $r = 1-1+1=1$ and from the diagram we compute the horizontal winding numbers to be $p=-1$ and $q=-2$.  (These values for the winding numbers of the diagram are confirmed by the calculations $p = \lk(Y,Z)=-1$, $q = \lk(X,Z)=-2$ and $r = \lk(X,Y)=1$.)  Thus the invariant $\nu(h_L)$ is well defined modulo $2 = 2\gcd(-1,-2,1)$.

Using a base point in the lower right corner of the diagram, and straight line paths from the icycles to the base point, the depths of the icycles $A$, $B$ and $C$ are $1$, $-1$ and $-1$.  Thus by \propref{pontryagin} we conclude that 
$$
\nu(h_L) \ = \ n+pq + \textstyle\sum d_ir_i \ = \ -3+2+1  \ = \ 0 \ \in \ \bz_2.
$$

Although the purpose of this example is to illustrate how the bicycle theorem is used for computations, we note that \thmref{A} (yet to be proved) yields the same result here effortlessly, since it implies that 
the Pontryagin invariant of the characteristic map of any three-component link in $S^3$ is even.  Therefore, when the pairwise linking numbers $p$, $q$ and $r$ are relatively prime, as they are in this case, the computation is  modulo $2\gcd(p, q, r)  =  2$,  and so the Pontryagin invariant is zero.   

\part{Double crossing changes}

For our next example we analyze the effect on $\nu(h_L)$ of changing two crossings of opposite signs between the first two components of a generic link $L=X\cup Y\cup Z$.  This will be a key step in our inductive proof of \thmref{A}.

To this end, choose a positive and a negative crossing between $X$ and $Y$ in a suitable projection of $L$, and let $(x_+,y_+)$ and $(x_-,y_-)$ be the corresponding page vectors.  Changing both of the crossings yields a new link $\what L$, with the same pairwise linking numbers as $L$.  This is illustrated in \figref{dblbor} where $L$ is the Borromean rings, shown on the left, and $\what L$ is the unlink, shown on the right.  

\begin{figure}[h!]
\includegraphics[height=135pt]{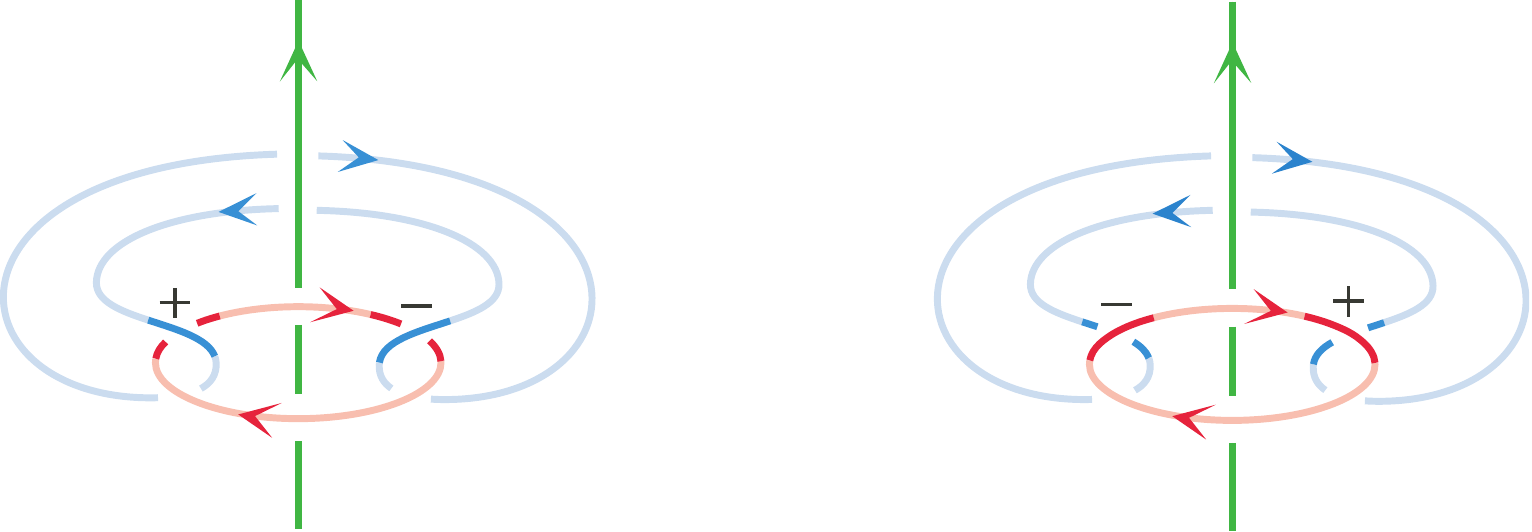}
\put(-348,97){$X$}
\put(-335,17){$Y$}
\put(-325,125){$Z$}
\put(-360,-15){(a) the Borromean rings $L$}
\put(-45,97){$\what X$}
\put(-60,17){$\what Y$}
\put(-88,125){$\what Z$}
\put(-115,-15){(b) the unlink $\what L$}
\caption{A double crossing change}
\label{fig:dblbor}
\end{figure}

We then say that $\what L$ is obtained from $L$ by a {\itb double crossing change}, and propose to use the bicycle theorem to compute the resulting change
$$
\Delta\nu \ = \  \nu(h_{\what L}) - \nu(h_{L})
$$
in Pontryagin invariants.

As it turns out, there is a simple formula for $\Delta\nu$ involving the link $L_0$ obtained from $L$ by ``smoothing" both crossings in the usual way: 
\vskip .1pt
$$
\includegraphics[height=20pt]{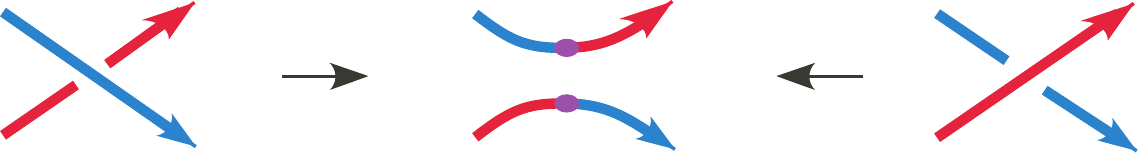}
$$
\vskip .1pt
The smoothed link $L_0$ has three components, $P$ and $Q$ (replacing $X$ and $Y$) and $Z$.  The component $P$ goes from $+$ to $-$ along $Y$, and then back from $-$ to $+$ along $X$, where we retain the $+$ and $-$ labels after smoothing, while $Q$ goes from $+$ to $-$ along $X$, and then back from $-$ to $+$ along $Y$.  This is illustrated in \figref{smooth} for the double crossing change shown in \figref{dblbor}, where we use  $X_{+-}$ to denote the arc on $X$ from $+$ to $-$\,, and so forth.  

\begin{figure}[h!]
\includegraphics[height=135pt]{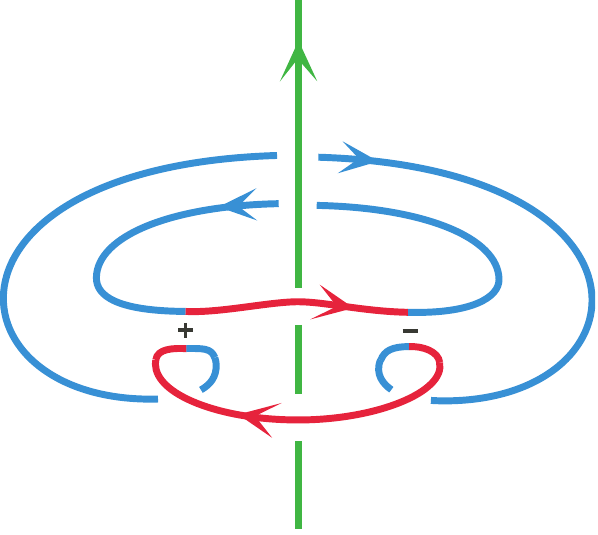}
\put(-23,60){$P$}
\put(0,50){$Q$}
\put(-90,125){$Z$}
\put(-103,99){\footnotesize$X_{+-}$}
\put(-65,74){\footnotesize$X_{-+}$}
\put(-95,61){\footnotesize$Y_{+-}$}
\put(-67,21){\footnotesize$Y_{-+}$}
\caption{The smoothed link $L_0 \ = \ P\cup Q\cup Z$}
\label{fig:smooth}
\end{figure}

Then we have the following consequence of the Bicycle Theorem:

\begin{corollary}\label{cor:double}
\textbf{\boldmath \textup{(Double Crossing)}
If $L = X\cup Y\cup Z$ is transformed into $\what L$ by a double crossing change, and $P$ and $Q$ are the components of the associated smoothing of $X\cup Y$, as explained above, then the corresponding Pontryagin invariants change by 
$$
\Delta\nu \ = \ 2\,\lk(P,Z) \ = \ -2\,\lk(Q,Z) \ \in \ \bz_{2\gcd(p,q,r)}
$$
where $p$, $q$ and $r$ are the pairwise linking numbers of the components of $L$.}
\end{corollary}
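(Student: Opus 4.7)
The plan is to use the Bicycle Theorem and Proposition~\ref{prop:pontryagin} to compare the toral diagrams of $\bl = h_L^{-1}(k)$ and $\hat{\bl} = h_{\hat L}^{-1}(k)$, and to extract $\Delta\nu = \nu(h_{\hat L}) - \nu(h_L)$ as a localized contribution coming from the rearrangement of bicycles inside the small ball $B \subset S^3$ where the double crossing change takes place.  First I would isotope $L$ into generic position with $Z = K$ the binding of the standard open book, and arrange that $B$ is disjoint from $K$ and contains the two page vectors $(x_\pm, y_\pm)$ corresponding to the crossings, with $\hat L$ also generic.  Since the double crossing change preserves the pairwise linking numbers $p, q, r$, these horizontal and vertical winding numbers agree for the two toral diagrams, so the $pq$ term cancels in $\Delta\nu$.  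Outside the small disk $D \subset \torh$ corresponding to $B$, the icycles of $L$ and $\hat L$ coincide together with their framings and vertical winding numbers, so $\Delta\nu \pmod{2\gcd(p,q,r)}$ is determined entirely by the local changes in the total framing $n$ and in the sum $\sum d_i r_i$ inside $D$.

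The heart of the argument is the local bookkeeping.  The double crossing change cuts the bicycles of $L$ passing through $B$ at the two crossings and reconnects the resulting arcs in a pattern dictated by the smoothed link $L_0 = P \cup Q \cup Z$: the strands of $P = Y_{+-}\cup X_{-+}$ govern one reconnection and those of $Q = X_{+-}\cup Y_{-+}$ govern the other.  Because the reconnection happens away from $K$, it preserves the aggregate longitudinal and meridional winding of the affected bicycles, but shifts longitudinal winding between the individual $L$-bicycles and $\hat L$-bicycles in a way recorded by $\lk(P, Z)$ (and, for the other gluing, by $\lk(Q, Z)$).  Combining the identities $n_i = -\ell_i - m_i$ and $r_i = m_i$ from the Bicycle Theorem with the homological equality $\lk(P,Z) + \lk(Q,Z) = \lk(X,Z) + \lk(Y,Z)$, which is divisible by $\gcd(p,q,r)$ and so forces $2\lk(P,Z) \equiv -2\lk(Q,Z) \pmod{2\gcd(p,q,r)}$, a careful tally of the shifts in $n$ and in $\sum d_i r_i$ yields the stated formula $\Delta\nu \equiv 2\lk(P, Z) \equiv -2\lk(Q, Z) \pmod{2\gcd(p,q,r)}$.

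The main obstacle is this local bookkeeping, and in particular the appearance of the factor of $2$.  I expect it to arise because both $\Delta n$ and $\sum\Delta(d_i r_i)$ each pick up a copy of $\lk(P,Z)$, but verifying this requires a careful analysis of how bicycles reconnect at a positive versus a negative crossing, how the meridional degrees $m_i$ are redistributed, and how the depths $d_i$ shift as icycles are rerouted through $D$ relative to a fixed basepoint.  I would validate the signs on the Borromean test case, where $L_0$ has $\lk(P, Z) = \pm 1$ and the established values $\mu = \pm 1$ before and $\mu = 0$ after force $\Delta\nu = \mp 2$, matching the formula and confirming the sign convention.
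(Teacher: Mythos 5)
Your plan reaches for the right tools (the Bicycle Theorem plus \propref{pontryagin}), and your opening reduction --- that $\lk(P,Z)+\lk(Q,Z)=p+q\equiv 0$ makes the two stated formulas equivalent --- is exactly the paper's first step. But the central mechanism you propose is not what happens, and the bookkeeping you defer to would not come out as you predict. You describe the double crossing change as cutting the bicycles inside the ball and reconnecting them ``in a pattern dictated by the smoothed link $L_0$,'' shifting \emph{longitudinal} winding between components. In fact the crossing change keeps $X$ as $X$ and $Y$ as $Y$; the smoothing $P\cup Q$ is never performed on the link and plays no role in how the toral diagram changes --- it enters only at the very end as a bookkeeping device. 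Since a crossing change does not alter the polar angles of points of $X$ and $Y$, the set $\cald$ of isogonal points is \emph{identical} for $L$ and $\what L$ (everywhere, not just outside a small disk), the depths $d_i$ are unchanged, and the longitudinal degrees $\ell_i$ are unchanged. What changes is the \emph{meridional} behavior: the vertical winding numbers of the two affected icycles shift by $\what r_+=r_+-1$ and $\what r_-=r_-+1$ (so the framing changes $\what n_\pm=n_\pm\mp 1$ cancel and the total framing $n$ is unchanged). Your expectation that ``both $\Delta n$ and $\sum\Delta(d_ir_i)$ each pick up a copy of $\lk(P,Z)$'' is therefore wrong: $\Delta n=0$, and the entire contribution is
$$
\Delta\nu \ = \ d_-(\what r_--r_-)+d_+(\what r_+-r_+) \ = \ d_--d_+ .
$$

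The real content of the corollary, which your plan does not supply, is the computation of these two depths. Choosing the base point $(s_+,t_-)$ and the vertical and horizontal paths $\gamma_\pm$, one finds $d_-=2\,X_{-+}\dt H_-$ and $d_+=-2\,Y_{+-}\dt H_+$ (the sign discrepancy coming from \figref{local}), and then the identity $X_{-+}\dt H_-+Y_{+-}\dt H_+=\lk(P,Z)$ follows from $P=X_{-+}\cup Y_{+-}$ together with the half-integer counting convention at the endpoints. That is where the factor of $2$ and the linking number $\lk(P,Z)$ actually come from --- from the definition of depth as \emph{twice} an intersection number with $\cald$, reinterpreted as intersections of arcs of $X$ and $Y$ with the pages $H_\mp$ --- not from a redistribution of longitudinal winding among reconnected bicycles. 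You would also need the case where $\vv_+$ and $\vv_-$ lie on the same bicycle, in which case the two winding-number shifts cancel, $\Delta\nu=0$, and the formula still holds because $d_+=d_-$.
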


\begin{proof}
First note that
$$
\lk(P\cup Q,Z) \ = \  \lk(X\cup Y,Z) \ = \ p+q \ = \ 0 \ \in \ \bz_{\gcd(p,q,r)}
$$
so it suffices to establish the first equality in the corollary.

We may assume that the two page vectors  $\vv_+ = (x_+,y_+)$ and $\vv_- = (x_-,y_-)$ associated with the chosen crossings lie on distinct hemispherical pages $H_+$ and $H_-$ of the standard open book in $S^3$, and that these pages contain no critical points in the link.   These two page vectors may lie in distinct bicycles in $L$, or they may lie in the same bicycle.

Suppose first that $\vv_+$ and $\vv_-$  lie in distinct bicycles $\calp_+$ and $\calp_-$ in $L$.  When we change $L$ to $\what L$, these bicycles will change, but the icycles  $\cald_+$ and $\cald_-$ in the $2$-torus that parametrize them will stay the same.  However,  their vertical winding numbers  $r_+$ and $r_-$ (which record the meridional degrees of $\calp_+$ and $\calp_-$) and their framings $n_+$ and $n_-$ (which record the negative of the sum of the meridional and longitudinal degrees of $\calp_+$ and $\calp_-$) will change.  In particular, we claim that 
$$
\what r_+\ = \ r_ + -1 \qquad\text{and}\qquad \what r_- \ = \ r_- + 1\,,
$$
and consequently \, $\what n_+ = n_+ + 1$ \,and\, $\what n_- = n_- - 1$ \, since the longitudinal degrees of the bicycles clearly do not change.  The figure below helps us to see this.
$$
\includegraphics[height=30pt]{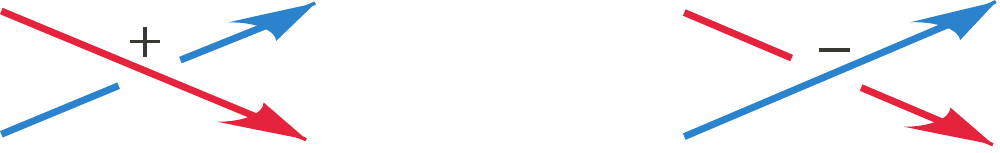}
\put(-212,0){\small$X$}
\put(-209,25){\small$Y$}
\put(-175,-1){\small$L$}
\put(-77,0){\small$\what X$}
\put(-75,25){\small$\what Y$}
\put(-40,-2){\small$\what L$}
$$
\vskip .2pt
In this figure, we start with a positive crossing in $L$ and change it to a negative crossing in $\what L$. Since the strands of $X$ and $Y$ are both pointing to the right, the bicycle $\calp_+$ is moving from left to right.  During this motion the page vectors in $\calp_+$  undergo half a {\it counter-clockwise} rotation with respect to the preferred orientation on the pages.  In the corresponding picture for $\what L$ , we see half a {\it clockwise} rotation.  Therefore the bicycle in $\what L$ has one more full clockwise rotation in the pages than in $L$, and so $\what r_+ =  r_+ -1$ as claimed.

If, for example, we switched the arrow on the strand of $Y$, we would have a negative crossing, but then the corresponding bicycle would be moving from right to left, and we would see that $\what r_- = r_- +1$ as claimed.  With this guidance, we leave the remaining cases to the reader.

Now suppose that the page vectors $\vv_+$ and $\vv_-$ lie in the same bicycle $\calp_+ = \calp_-$.  Then the above changes in vertical winding number $r$ for the corresponding icycle $\cald_+=\cald_-$ will cancel, and so we see that neither the vertical winding number nor the framing of the icycle will change, that is $\what r_\pm = r_\pm$ and $\what n_\pm = n_\pm$.

Note that in either case, whether $\vv_+$ and $\vv_-$ lie in the same or different bicycles, the {\it total framing} $n$ of the diagram (the sum of the framings of the icycles) does not change.

At this point we recall \propref{pontryagin}, which tells us that 
$$ 
\nu(h_L)  \ = \  n  +  p q  +  \textstyle\sum d_i r_i\,. 
$$
In passing from $L$  to  $\what L$ via the double crossing change, we have just seen that the total framing $n$ does not change, and the pairwise linking numbers $p$ and $q$ certainly do not change.  We have also seen that the icycles stay the same, so their depths $d_i$ do not change.  Only the winding numbers $r_+$ and $r_-$ of the (possibly equal) icycles $\cald_+$ and $\cald_-$ may change.  In particular, they also do not change when the page vectors
$$
\vv_+ \ = \ (x(s_+), y(t_+)) \qquad\text{and}\qquad \vv_- \ = \ (x(s_-), y(t_-))
$$ 
lie in the same bicycle, and so $\Delta\nu = 0$ in this case, while they change to $\what r_+ = r_+ - 1$ and $\what r_- = r_- + 1$ when $\vv_+$ and $\vv_-$ lie in distinct bicycles, in which case we have
$$
\begin{aligned}
\Delta\nu \ &= \  \nu(h_{\what L}) - \nu(h_L) \\
\ &= \ d_-(\what r_- - r_-) + d_+(\what r_+ - r_+) \\
\ &= \ d_- - d_+\,.
\end{aligned}
$$
Thus in either case it remains to prove that $d_- - d_+ = 2\,\lk(P,Z)$. 

To show this, we must compute the depths $d_+$ and $d_-$ of $\cald_+$ and $\cald_-$.  This requires a choice of base point, and then a choice of paths $\gamma_+$ and  $\gamma_-$ from $\cald_+$ and $\cald_-$ to this base point.  We let $(s_+,t_-)$ be the base point, and use the vertical path $\gamma_+$ from $(s_+,t_+)$ to $(s_+,t_-)$ and the horizontal path $\gamma_-$ from $(s_-,t_-)$ to $(s_+,t_-)$.  

The depth $d_-$ of the component $\cald_-$ of $\cald$ counts the intersections of $\gamma_-$ with $\cald$, which means that it counts the times that $(x(s),y(t_-))$ is a page vector for $s_-\le s\le s_+$, assuming parametrizations set up so that $s_-<s_+$\,.  Since $y(t_-)$ already lies in the page $H_-$, this means we are counting the times that $x(s)$ also lies in $H_-$ for $s_-\le s\le s_+$\,.  In other words we are counting the intersection number $X_{-+}\dt H_-$\,.  Examining \figref{local}, we find that the signs of the intersection  points of $\gamma_-$ with $\cald$ {\it agree} with the signs of the corresponding intersection points of $X_{-+}$ with $H_-$, and hence
$$
d_- \ = \ 2\, X_{-+} \dth H_- \mod{2\gcd(p,q,r)}.
$$

In a similar fashion, we find that intersections of $\gamma_+$ with $\cald$ correspond to intersections of $Y_{+-}$ with $H_+$, but in this case, the signs of corresponding points of intersection are {\it opposites}, and so the depth of $\cald_+$ is 
$$
d_+ \ = \ -2\,Y_{+-} \dth H_+\mod{2\gcd(p,q,r)}.
$$ 

Therefore the assertion that  $d_- - d_+ = 2\lk(P,Z)$, which will complete the proof of the corollary, reduces to the identity
$$
X_{-+} \dth H_-  +  Y_{+-} \dth H_+ \ = \ \lk(P,Z)\,.
$$
But this follows easily from the fact that $P = X_{-+} \cup Y_{+-}$.  \figref{linking} helps us to see this.  In the figure, the arc  $X_{-+}$  winds around the vertical $Z$ axis $k$ times and the arc $Y_{+-}$ winds around it $\ell$ times, while the entire loop $P = X_{-+} \cup Y_{+-}$ winds around it $k+\ell+1$ times.   By our half-counting convention, we have 
$$
X_{-+} \dt H_- \ = \ 1/2 + k \qquad\text{and}\qquad Y_{+-} \dt H_+ \ = \ 1/2 + \ell
$$
and the result follows.  
\end{proof}

\begin{figure}[h!]
\includegraphics[height=190pt]{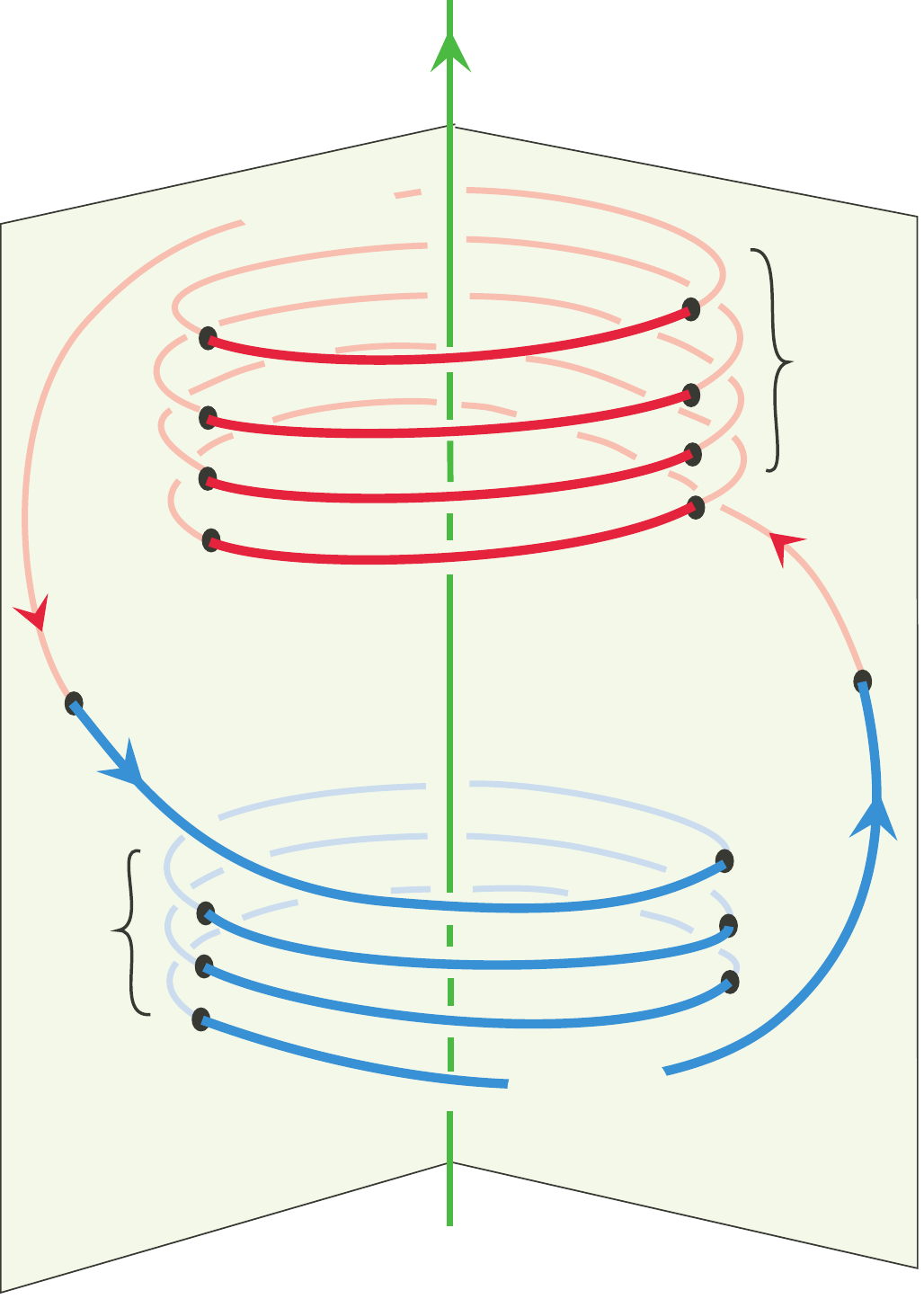}
\put(-132,10){$H_-$}
\put(-18,12){$H_+$}
\put(-57,27){$X_{-+}$}
\put(-96,155){$Y_{+-}$}
\put(-82,180){$Z$}
\put(-126,50){$\scriptstyle k$}
\put(-17,135){$\scriptstyle \ell$}
\put(-121,86){\boldmath$\scriptstyle -$}
\put(-19,90){\boldmath$\scriptstyle +$}
\caption{The linking of $P=X_{-+} \cup Y_{+-}$ and $Z$}
\label{fig:linking}
\end{figure}

For the inductive step of the proof of \thmref{A} in the next section, we will need to apply this formula for $\Delta\nu$ under the double crossing change $L\to\what L$ shown in \figref{double} (which will be seen to be equivalent to a delta move).  It is understood that $L$ and $\what L$ should coincide outside the picture, where in fact they can be arbitrary.  Indeed, if not generic outside the ball, they can be adjusted by a link homotopy to become so, and then the methods described above apply.   Since the component $Q$ of the smoothed link $L_0$ is just a meridian of $Z$ with $\lk(Q,Z) = -1$, it follows that $\Delta\nu = 2$.  

Thus we have proved the following: \label{double}

\begin{figure}[h!]
\includegraphics[height=100pt]{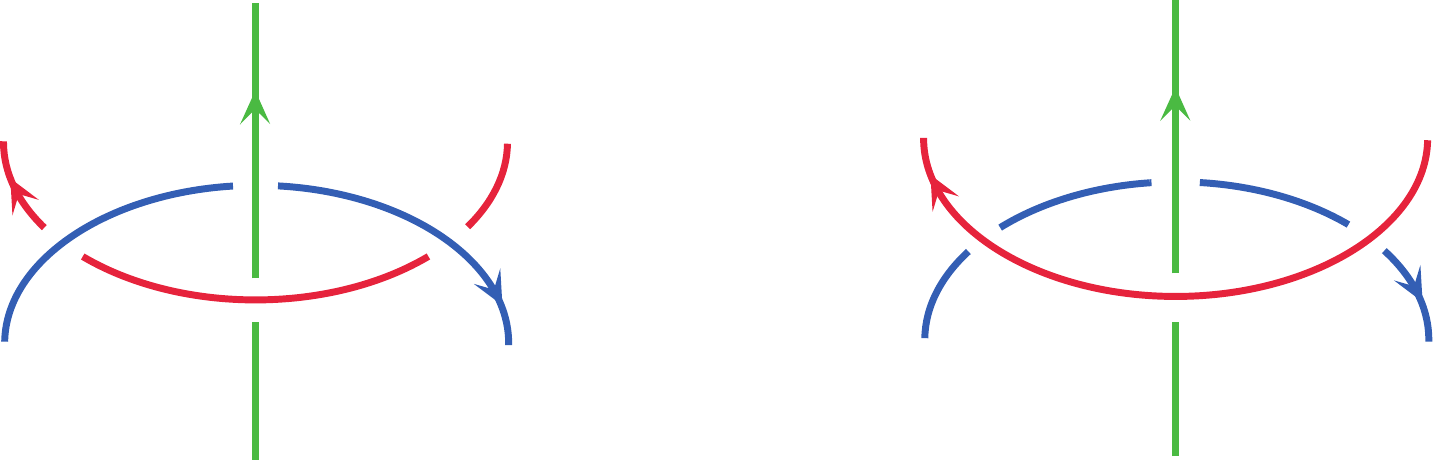}
\put(-315,14){\small$X$}
\put(-203,73){\small$Y$}
\put(-267,90){\small$Z$}
\put(-300,37){\footnotesize$+$}
\put(-218,37){\footnotesize$-$}
\put(-100,37){\footnotesize$-$}
\put(-18,37){\footnotesize$+$}
\put(-285,-15){(a) the link $L$}
\put(-90,-15){(b) the link $\what L$}
\caption{A simple double crossing change}
\label{fig:double}
\end{figure}

\begin{corollary}\label{cor:double}
\textbf{\boldmath 
If two links $L$ and $\what L$ coincide outside a $3$-ball, and appear in the ball as shown in \figref{double},  then $\nu(h_{\what L}) = \nu(h_L) + 2$.}
\end{corollary}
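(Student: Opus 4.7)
The plan is to apply the already-proven Double Crossing corollary to this specific configuration, once we have reduced to a setting where the bicycle machinery is available.

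First, I would note that the links $L$ and $\widehat L$ as drawn in \figref{double} are not assumed to be generic, since they can be arbitrary outside the pictured $3$-ball. So I would begin by using a link homotopy, supported outside the ball, to make $L$ generic in the sense of \secref{Aprep}, and then arrange that the same homotopy applied to $\widehat L$ also produces a generic link (the two crossings inside the ball can be taken to lie on two distinct hemispherical pages $H_+$ and $H_-$ containing no critical points). Since link homotopies of $L$ induce homotopies of $h_L$, neither $\nu(h_L)$ nor $\nu(h_{\widehat L})$ changes in the process.

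Next I would identify the smoothed link $L_0 = P \cup Q \cup Z$ inside the $3$-ball shown in \figref{double}. Smoothing the two crossings in the indicated way produces one component $Q$ which is a small closed loop encircling the arc of $Z$ that passes between the two crossings, and a second component $P$ obtained from $X \cup Y$ by reconnecting the strands. By inspecting the orientations in the figure, $Q$ is a meridian of $Z$ with linking number $\lk(Q,Z) = -1$ (the sign being determined by the convention that the two crossings shown are $+$ and $-$ as marked).

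Finally, I would invoke the previously established Double Crossing corollary, which gives
\[
\Delta\nu \ = \ -2\,\lk(Q,Z) \ = \ 2 \quad\in\ \bz_{2\gcd(p,q,r)}.
\]
The main (very minor) obstacle is simply verifying that $Q$ is indeed a meridian of $Z$ with linking number $-1$ rather than $+1$; this is a direct reading of the figure, using the orientation conventions and the placement of the signs $+$ and $-$ on the two crossings. Once this sign is pinned down, the conclusion $\nu(h_{\widehat L}) = \nu(h_L) + 2$ follows immediately from the general formula.
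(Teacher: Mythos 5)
Your proposal is correct and follows essentially the same route as the paper: the paper likewise reduces to a generic position by a link homotopy, observes that the component $Q$ of the smoothed link is a meridian of $Z$ with $\lk(Q,Z)=-1$, and applies the Double Crossing formula to conclude $\Delta\nu = -2\,\lk(Q,Z) = 2$.
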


We now embark on the proof of the bicycle theorem, which will occupy us for the rest of this section.  

\part{Proof of the bicycle theorem}

Start with a generic link $L$ in $S^3$ where, as usual, $x=x(s)$, $y=y(t)$ and $z=z(u)$ parametrize the three components $X$, $Y$ and $Z$.   Then $Z$ coincides with the binding $K$ of the standard open book, which is the circle subgroup of $S^3$ containing the quaternion $k$.  Looking back at the formula for the characteristic map 
$$
h_L(s,t,u) \ = \ (y_z-x_z)/|y_z-x_z| \quad\text{where}\quad a_z = \pr_{-1}(-a\zbar)\,,
$$
we see that we must understand the $K$-action sending $z\in K$ to the automorphism $a\mapsto a\zbar$ of $S^3$, and the induced $K$-action on $\br^3$ via stereographic projection.  Visualizing how these actions transform the pages of the standard open books will aid us in our subsequent arguments, and we do this next.

\part{The geometry of the $K$-action}

For any $z =  \cos\alpha + k\sin\alpha$ in the binding $K$,
right multiplication by $\zbar =  \cos\alpha -k\sin\alpha$ is an isometry of $S^3$ that rotates $K$ by $-\alpha$ radians, likewise rotates the orthogonal great circle $C$ of page centers by $\alpha$ radians, and so advances each hemispherical page $H_\theta$ to the page $H_{\theta+\alpha}$ while simultaneously rotating the page by $-\alpha$ radians about its center $i_\theta$.  Therefore, as $z$ traverses $K$, any given page turns once around $K$, successively occupying the positions vacated by the other pages.  During this time, the page spins once negatively about its center, so that in total it is following a left-handed screw motion along $C$.   

This turning of the hemispherical pages about $K$ in $S^3$ is transferred by stereographic projection to a turning of the half-planar pages about the $k$-axis in $\br^3$, while the {\it spherical} rotations of the pages in $S^3$ become {\it hyperbolic} rotations of the pages in $\br^3$ about their centers.  This is a consequence of the conformality of stereographic projection, which implies that the page identification $H_\theta \leftrightarrow P_\theta$ is conformal.  Once again, the net effect is a left-handed screw motion along $C$.

This description of the $K$-action has the following technical consequence that is critical for our study of the characteristic map of a generic link.

\break

\begin{lemma}\label{lem:twist}
\textbf{\boldmath \textup{(Twist Lemma)}
Let $x$ and $y$ be distinct points in $S^3$ lying in the complement of the binding $K$ of the standard open book.
\begin{indentation}{-.2em}{1.2em}
\begin{itemize}
\item[(a)]  If $x$ and $y$ lie on different pages, then for $z\in K$, the vector $y_z-x_z$ never lies on a page of the corresponding open book in $\br^3$, and in particular never points straight up.
\smallskip  
\item[(b)] If $x$ and $y$ lie on the same page, then as $z$ traverses $K$, the vectors $y_z-x_z$ lie on successive pages in $\br^3$, turning once positively around the binding, and spinning once counterclockwise without backtracking within the pages as they go.  In particular, $y_z-x_z$ points straight up for a unique $z = \tau(x,y) \in K$.    
\end{itemize}
\end{indentation}}
\end{lemma}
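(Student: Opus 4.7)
The central observation will be that $a \mapsto a_z = \pr_{-1}(-a\bar z)$ factors as three open-book-preserving maps: right multiplication by $\bar z$ (an isometry of $S^3$ sending $H_\theta \to H_{\theta+\alpha}$ for $z = \cos\alpha+k\sin\alpha$), the antipodal map (sending $H_\phi \to H_{\phi+\pi}$), and $\pr_{-1}$ (identifying $H_\psi$ with $P_\psi$ in $\br^3$). The composite therefore carries $H_\theta$ onto $P_{\theta+\alpha+\pi}$ and preserves the ``same page'' relation on pairs, and this single observation drives both halves of the lemma.

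Part (a) will then be immediate: if $x\in H_{\theta_1}$ and $y\in H_{\theta_2}$ with $\theta_1\ne\theta_2$, then for every $z\in K$ the points $x_z$, $y_z$ lie on the distinct pages $P_{\theta_1+\alpha+\pi}$, $P_{\theta_2+\alpha+\pi}$ in $\br^3$. Their horizontal $(q_1,q_2)$-projections then have distinct polar arguments, so the segment from $x_z$ to $y_z$ cannot be confined to a single half-planar page, and in particular the vector $y_z-x_z$ cannot be vertical (a vertical displacement preserves $(q_1,q_2)$, hence the polar angle).

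For part (b), with $x,y\in H_{\theta_0}$, both $x_z,y_z$ always lie on the common page $P_{\theta(\alpha)}$ with $\theta(\alpha)=\theta_0+\alpha+\pi$, which turns once positively around the $k$-axis as $\alpha$ traverses $[0,2\pi]$; this is the ``turning once positively around the binding'' assertion. For the within-page claim I will conjugate the composite by the meridional identification $m$ to produce a continuous family of isometries $I_\alpha\colon\bh\to\bh$. Each $I_\alpha$ fixes $i$ (page centers map to page centers, since $i_{\theta_0}\bar z=i_{\theta_0+\alpha}$) and is orientation-preserving: although the antipodal map's derivative is $-\mathrm{Id}$ on $T_aS^3$, the transverse $\partial_\phi$ also reverses, so the induced map on the 2-dimensional tangent space to each page is orientation-preserving, and the other two factors are manifestly so. Hence $I_\alpha$ is a hyperbolic rotation about $i$ by some angle $\beta(\alpha)$, and a short quaternionic computation, anchored at $\alpha=0$ (where $I_0(z)=-1/z$ is rotation by $\pi$) and $\alpha=\pi$ (where $I_\pi=\mathrm{id}$), identifies $\beta(\alpha)=\pi-\alpha$, so $\beta$ sweeps once around $S^1$ monotonically as $\alpha$ does.

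The final step converts monotonicity of $\beta$ into monotonicity of the meridional angle. Writing $R_\beta(z)=(az+b)/(-bz+a)$ with $a=\cos(\beta/2)$, $b=\sin(\beta/2)$, the identity $R_\beta(q)-R_\beta(p)=(q-p)/[(a-bp)(a-bq)]$ yields
\[
\tfrac{d}{d\beta}\arg\bigl(R_\beta(q)-R_\beta(p)\bigr) \;=\; \tfrac{\Im p}{2|a-bp|^2} + \tfrac{\Im q}{2|a-bq|^2} \;>\; 0
\]
for all $p,q\in\bh$. Composing with $\beta(\alpha)=\pi-\alpha$ shows that $\arg(m(y_z)-m(x_z))$ is strictly monotone in $\alpha$ with total change $\pm 2\pi$ over a full period, establishing the spinning-once-counterclockwise-without-backtracking assertion and guaranteeing a unique $z=\tau(x,y)\in K$ with $y_z-x_z$ pointing straight up. The main obstacle will be the explicit identification $\beta(\alpha)=\pi-\alpha$, which requires carefully transferring the quaternionic product $a\bar z$ and its antipodal-and-projection image through $m$ to verify that $I_\alpha$ is indeed a Möbius transformation; once this is in hand, the monotonicity assertion is the direct computation displayed above.
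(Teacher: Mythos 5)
Your part (a) is essentially the paper's own argument: each of the three factors of $a\mapsto a_z$ carries pages to pages, so isogonal pairs stay isogonal and distinct pages stay distinct, and a vertical vector forces equal polar angles. For part (b) you follow the same overall reduction as the paper --- the within-page dynamics is a one-parameter family of hyperbolic rotations of $\bh$ about $i$, and everything rests on showing that $\arg(\rot_\beta q-\rot_\beta p)$ is strictly monotone in $\beta$ --- but you execute that monotonicity step by a genuinely different and, to my eye, cleaner computation. The paper reduces to the derivative at $\beta=0$, transfers to the Poincar\'e disk, and invokes the central angle theorem to bound $\frac{d}{d\alpha}\arg(1+e^{i\alpha}z)$ by $1/2$; you stay in the upper half-plane, write the rotation as the M\"obius transformation $R_\beta(z)=(az+b)/(-bz+a)$, and use the identity $R_\beta(q)-R_\beta(p)=(q-p)/\bigl((a-bp)(a-bq)\bigr)$ to obtain the closed form
$$
\frac{d}{d\beta}\arg\bigl(R_\beta(q)-R_\beta(p)\bigr) \ = \ \frac{\Im p}{2|a-bp|^2}+\frac{\Im q}{2|a-bq|^2} \ > \ 0\,.
$$
I have verified both the identity and this derivative; they are correct, and they give \emph{global} strict monotonicity in one stroke, with the bonus that each term $\arg(a-bq)$ decreases by exactly $\pi$ over $\beta\in[0,2\pi]$, so the total change of the argument is exactly $2\pi$ per period --- ``spins exactly once,'' which the paper leaves implicit.

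Two loose ends remain. First, the identification $\beta(\alpha)=\pi-\alpha$ is anchored only at $\alpha=0,\pi$, and two sample values do not determine a function; the clean finish is to note that the in-page maps induced by right multiplication by $\bar z$ form a one-parameter group of rotations about $i$ (the $\bar z$'s commute), so the angle is a continuous homomorphism $\alpha\mapsto n\alpha$, and a single derivative computation (or the paper's description of the $K$-action as a left-handed screw) gives $n=-1$; composing with the antipodal factor $w\mapsto -1/w=\rot_\pi(w)$ then yields $\pi-\alpha$. (Your stated reason that the antipodal map is orientation-preserving on pages --- ``the transverse $\partial_\phi$ also reverses'' --- is garbled, since $\ell(-q)=\ell(q)+\pi$ shows the transverse direction is preserved; but the conclusion is correct and is immediate from $m(-q)=-1/m(q)$.) Second, you hedge the direction of spin as ``$\pm2\pi$,'' yet the direction is part of the lemma's assertion and feeds into the vertical winding numbers and framings later. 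Your own formulas determine it: with $d(\arg)/d\beta>0$ and $\beta=\pi-\alpha$, the meridional argument of $y_z-x_z$ is strictly \emph{decreasing} in $\alpha$, total change $-2\pi$. You should commit to this sign and reconcile it with the word ``counterclockwise'' --- the paper's subsequent use of the lemma (e.g.\ $\partial_u h_L$ pointing toward the binding when $y_z-x_z$ points straight up) is consistent with a decreasing meridional argument, so the tension is with the orientation convention on the pages, not with your computation; but as written the proposal leaves this clause of the lemma unestablished.
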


\begin{proof}
Since the $K$-action carries pages to pages, $x$ and $y$ will lie on the same page in $S^3$ if and only if $x_z$ and $y_z$ lie on the same page in $\br^3$ for all $z\in K$.  Part (a) of the lemma is now obvious.   Using the meridional projection, and the description of the $K$-action above, part (b) of the lemma translates into the following statement.  For any pair of distinct points $x$ and $y$ in the upper half-plane model of the hyperbolic plane $\bh$, the function 
$$
d(\alpha) \ = \ \arg(\rot_\alpha y-\rot_\alpha x)
$$
is strictly increasing, where $\rot_\alpha:\bh\to\bh$ is hyperbolic rotation about $i$ by $\alpha$ radians.   Since we wish to prove this for all $x$ and $y$, it suffices to show that $d'(0)>0$ (because $d'(\alpha)$ for one choice of $x$ and $y$ is equal to $d'(0)$ for some other choice of $x$ and $y$).    

To prove this, we transfer the problem to the Poincar\'e disk $\bd$ using the conformal map 
$$
f:\bd\ \longrightarrow\ \bh \quad,\quad f(z) \ = \  \frac{1-z}{1+z}\ i
$$
that sends $0$ to $i$.  Hyperbolic rotation of $\bh$ about $i$ by any angle is conjugate by $f$ to euclidean rotation of $\bd$ by the same angle.   Thus we must show that for any pair of distinct points $x$ and $y$ in $\bd$, the function
$$
g(\alpha) \ = \ \arg(f(e^{i\alpha}y)-f(e^{i\alpha}x))
$$
has positive derivative at $\alpha = 0$.   Noting that  $f(a)-f(b) = 2i(b-a)/(1+a)(1+b)$ and using the fact that the argument function converts products into sums and quotients into differences, we find that $g(\alpha)$ differs by a constant from the function 
$$
h(\alpha) \ = \ \alpha - \arg(1+e^{i\alpha}x) - \arg(1+e^{i\alpha}y),
$$  
and so it remains to show that $h'(0)>0$.  But a simple geometric argument using the central angle theorem from elementary plane geometry shows that, for any given $z\in \bd$, the derivative of the function $\arg(1+e^{i\alpha}z)$ at $\alpha = 0$ is strictly less than $1/2$.  Therefore $h'(0) > 1 - 1/2-1/2 \, = \, 0$ as desired.
\end{proof}

It follows from the twist lemma that, for any point $(s,t)$ in the $2$-torus that is isogonal for our generic link $L$ (meaning that  $x=x(s)$ and $y=y(t)$ have the same polar angle, and hence lie on the same page of the open book),  there exists a unique $u = u(s,t)\in S^1$ for which the vector $y_z-x_z$ points straight up, namely the $u$ for which $z = z(u) = \tau(x,y)$.   It follows that the link
$$
\bl \ = \ h_L^{-1}(k) \ = \ \{(s,t,u)\in\tor \st (s,t)\in\cald \text{ and } u=u(s,t)\}\,, \label{pontryaginlink}
$$
is the graph of the function $u(s,t)$ over the collection $\cald$ of isogonal curves in $\torh$, as asserted in the introduction.  

We now use the twist lemma to prove part (a) of the bicycle theorem, asserting that $k$ is a regular value of $h_L$.  This will endow the components of $\bl$ with orientations (and thus vertical winding numbers) and framings, as defined in \secref{pontryagin}.  The proof of the bicycle theorem will then be completed by showing that these agree with the preferred orientations, vertical winding numbers and framings of the components of $\cald$, as defined earlier in this section.

\part{Why is $k$ a regular value of $h_L$?}

Suppose that $h_L(s,t,u) = k$.  This means that the vector from $x_z $ to $y_z$ points straight up, and so lies in some page $P$ of the standard open book in $\br^3$.
We must show that at $(s,t,u)$ the vectors $\partial_s h_L$, $\partial_t h_L$ and $\partial_u h_L$ span the tangent space to $S^2$ at $k$.  

We are guided by \figref{regular}, depicting a neighborhood of the vertical vector $\vec$ from $x_z $ to $y_z$ in $\br^3$, lying on the page $P$.   In this figure, $X_z$ and $Y_z$ denote the images of $X$ and $Y$ under the $z$-action, one or both of which must be transverse to $P$ since $L$ is generic.  We arbitrarily depict $X_z$ tangent to $P$ and $Y_z$ transverse to it.  

\begin{figure}[h!]
\includegraphics[height=170pt]{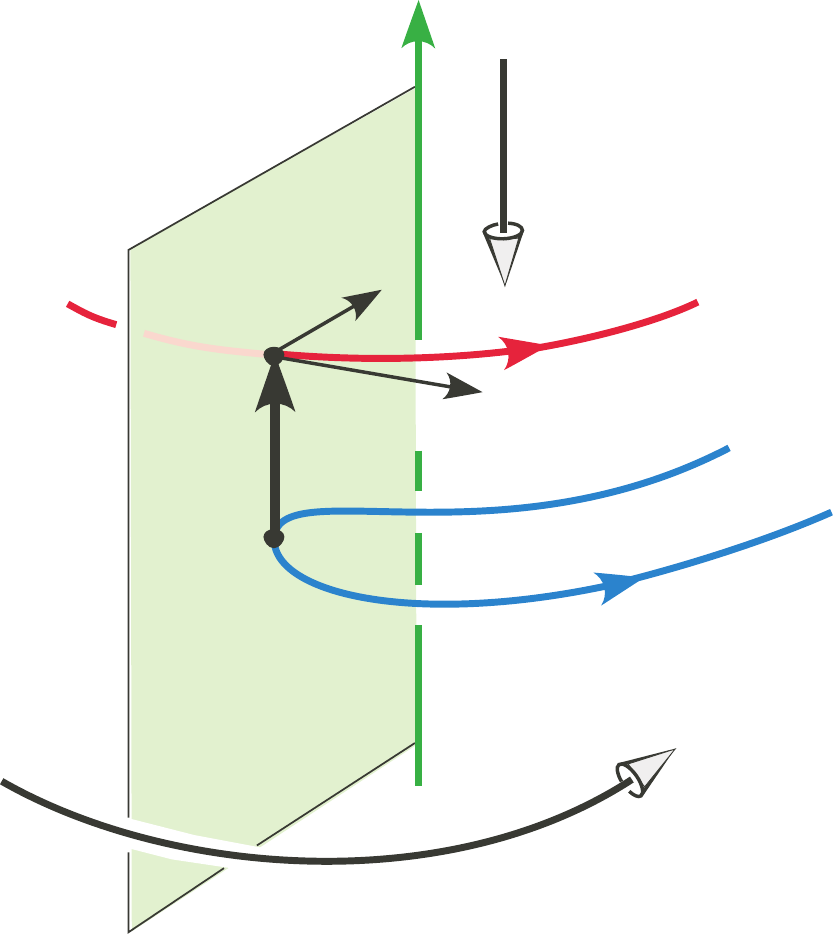}
\put(-114,105){\small$y_z$}
\put(-113,66){\small$x_z$}
\put(-113,83){\small$\vec$}
\put(-21,116){$Y_z$}
\put(-17,84){$X_z$}
\put(-100,120){\tiny$\partial_u h_L$}
\put(-85,92){\tiny$\partial_t h_L$}
\put(-120,29){$P$}
\put(-95,5){\scriptsize TURN}
\put(-56,158){\scriptsize S}
\put(-56.5,150){\scriptsize P}
\put(-55.5,142){\scriptsize I}
\put(-57,134){\scriptsize N}
\caption{$k$ is regular}
\label{fig:regular}
\end{figure}

The hollow arrows in the figure indicate the left-handed screw motion of the $K$-action.  In particular, as the $u$ parameter increases along $K$, this action (the stereographic image of right multiplication by $-\zbar$) turns the pages in the indicated direction while spinning them in a left-handed fashion about their centers. 

As the $t$ parameter along $Y$ increases, the tip of $\vec$ moves to the right along $Y_z$, so that $\partial_t h_L$ also points to the right, transverse to $P$.  For clarity, we draw this partial derivative vector at the tip of $\vec$, even though it is really tangent to $S^2$ at its north pole $k$. 

As the $u$ parameter along $Z$ increases, two things happen which affect $\vec$.  The page containing it turns around the binding, but such an action keeps $\vec$ vertical, so has no infinitesimal effect.  In addition, the page spins around its center, shifting the binding in a downward direction.  Thus by the twist lemma, the vector $\partial_u h_L$ is nonzero  and tangent to the page, pointing toward the binding when placed with its initial point at $y_z$.     

It follows that $\partial_t h_L$ and $\partial_u h_L$ are linearly independent, confirming that $(s,t,u)$ is a regular point of the map $h_L$.  Since $(s,t,u)$ was chosen arbitrarily in $h_L^{-1}(k)$, we see that $k$ is a regular value of $h_L$, proving part (a) of the Bicycle Theorem.

\part{Finishing the proof of the Bicycle Theorem}

Starting with a three-component link $L$ in generic position in $S^3$, we have shown above 
that the Pontryagin link $\bl$ of its characteristic map $h_L$ is given by
$$
\bl \ = \ h_L^{-1}(k) \ = \ \{(s,t,u)\in\tor \st (s,t)\in\cald \text{ and } u=u(s,t)\}\,. 
$$
It remains to prove that the preferred orientation, vertical winding number $r_i$ and framing $n_i$ of each isogonal curve $\cald_i$, as specified in \figref{local} and just before the statement of the bicycle theorem,  agree with the orientation, vertical winding number and framing of the component $\bl_i$ of the Pontryagin link that lies over $\cald_i$. 

To see that the orientations agree, choose a regular page vector $(x,y)$ parametrized by an isogonal point $(s,t)$ in $\cald_i$, and consider the orientation of $Y$ relative to the page containing $x$ and $y$, as recorded by the sign of $y$.  

\figref{orientation} shows the associated vertical vector $(x_z,y_z)$ in $3$-space for the case when $\sgn(y) = +1$,  causing the displaced link component $Y_z$ to point to the {\it right}.  Arguing as above (see \figref{regular}) we see that  $\partial_t h_L$ also points to the right, while $\partial_u h_L$ points toward the binding as always.  Thus $\partial_t h_L$ and $\partial_u h_L$ form a positive basis for the tangent plane to $S^2$ at $k$.  Since $\partial_s$,  $\partial_t$ and $\partial_u$ form a positive basis for the tangent space to $\tor$ at each of its points, it follows that the link component $\bl_i$ must be oriented in the direction of {\it increasing} $s$ near the associated point $(s,t,u)$ in the $3$-torus, and this agrees with the orientation assigned to $\cald_i$ in \figref{local}.  

\begin{figure}[h!]
\includegraphics[height=170pt]{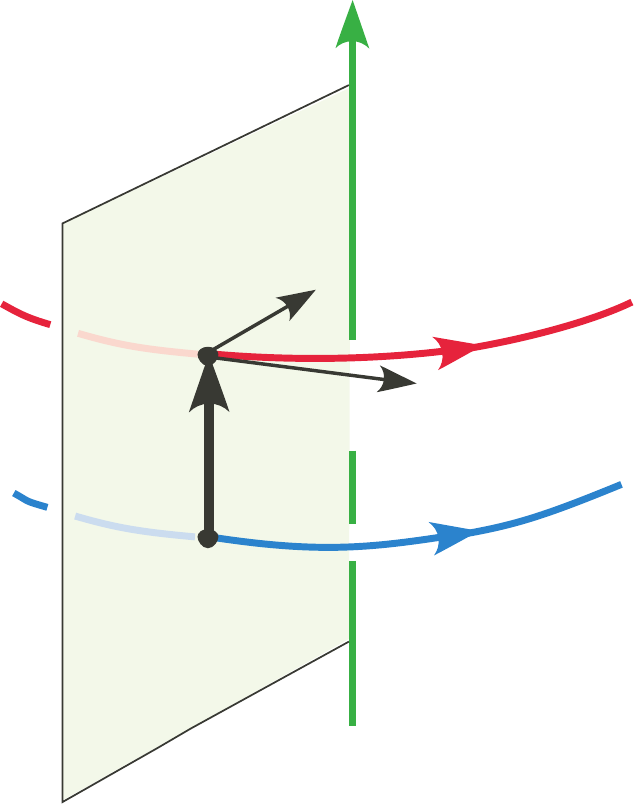}
\put(-100,100){\small$y_z$}
\put(-100,48){\small$x_z$}
\put(-19,110){$Y_z$}
\put(-17,50){$X_z$}
\put(-81,113){\tiny$\partial_u h_L$}
\put(-66,79){\tiny$\partial_t h_L$}
\caption{Orientation of the Pontryagin link}
\label{fig:orientation}
\end{figure}

Similarly, if $\sgn(y)=-1$ then $\bl_i$  points in the direction of {\it decreasing} $s$ near $(s,t,u)$.  Referring again to \figref{local}, it is evident that this agrees with the preferred orientation on $\cald_i$.  

Next consider the vertical winding number $r_i$ of $\cald_i$.  By definition $r_i$ is equal to the meridional degree $m_i$ of the associated bicycle $\calp_i$, that is, the number of times that the page vectors spin in the pages of the open book as the bicycle is traversed.  By the twist lemma, this is equal to the number of times that $z$ spins around $K$, or equivalently that $u = u(s,t)$ spins around the circle as $(s,t)$ traverses $\cald_i$, which is the vertical winding number of $\bl_i$. 

Finally consider the framing $n_i$ of $\cald_i$.  By definition $n_i = -\ell_i-m_i$, where $\ell_i$ and $m_i$ are the longitudinal and meridional degrees of the associated bicycle $\calp_i$.
This integer specifies a normal vector field $\vecn$ to $\bl_i$ by adding $n_i$ full twists (positive or negative according to the sign of $n_i$) to the lift $\vecz$ of a normal vector field to $\cald_i$ in $T^2$, the ``zero" or ``blackboard" framing.  

We must show that the vector field $\vecn$ coincides with the Pontryagin framing of $\bl_i$.  In other words, the differential of $h_L$ carries it onto a homotopically trivial loop of nonzero tangent vectors to $S^2$ at $k$.  To see this, first note that $\vecz$ is homotopic to the vertical vector field $\partial_u$, remaining nonzero and transverse to $\bl_i$ during the homotopy.  Now since $\partial_u h_L$ is always tangent to the page containing $x_z$ and $y_z$, as shown above, it is clear that each longitudinal circuit of the bicycle $\calp_i$ causes $dh_L(\vecz)$ to spin once in the same direction about the binding.  By the twist lemma, each meridional spin of the bicycle also causes $dh_L(\vecz)$ to spin once about the binding.  Thus $dh_L(\vecz)$ spins $ \ell_i+m_i$ times as the bicycle is traversed.  Adding $n_i= -\ell_i-m_i$ full twists to the framing negates this spinning, and so $dh_L(\vecn)$ does not spin at all, up to isotopy, as asserted. 

This completes the proof of the bicycle theorem.


\section{Proof of \thmref{A}}\label{sec:A}

Let $L$ be a three-component link in the $3$-sphere with characteristic map $g_L:\tor\to S^2$.  \thmref{A} asserts that the degrees of $g_L$ on the coordinate $2$-tori of $\tor$ are equal to the pairwise linking numbers $p$, $q$ and $r$ of the components of $L$, and that Pontryagin's absolute $\nu$-invariant of $g_L$ is equal to twice Milnor's triple linking number $\mu(L)$ mod $2\gcd(p,q,r)$.  

The first statement was proved easily at the end of \secref{characteristic}.  We now prove the second statement by an inductive argument, relying heavily on the techniques and results of the last section.  Throughout, we will use the asymmetric characteristic map $h_L:\tor\to S^2$ in place of $g_L$, since these two maps are homotopic.

\part{Proof of the base case of \thmref{A}}

In \exref{Lpqr} we introduced the  ``base links" $L_{pqr}$ with pairwise linking numbers $p$, $q$ and $r$ and $\mu(L_{pqr}) = 0$.  See \figref{Lpqr}.  We will show that \thmref{A} holds for these links.

To accomplish this, we must show that for each $p$, $q$ and $r$, the associated characteristic map $h_{L_{pqr}}$ is homotopic to the base map $f_{pqr}:\tor\to S^2$ used in \secref{pontryagin} to convert the relative Pontryagin $\nu$-invariant to an absolute $\nu$-invariant.  In other words, we must show that $\nu(h_{L_{pqr}}) = 0$.

First move $L_{pqr}$ by a link homotopy into generic position, and let $L=X\cup Y\cup Z$ be the resulting link.  Then $Z$ coincides with the binding $K$ of the standard open book, while $X$ and $Y$ wind around $Z$ in a generic fashion, $q$ and $p$ times respectively, linking each other $r$ times along the way.  In fact this winding can be made {\it monotonic}, to appear as shown in \figref{genericLpqr} for the case $(p,q,r)=(5,3,-2)$. 

\begin{figure}[h!]
\includegraphics[height=200pt]{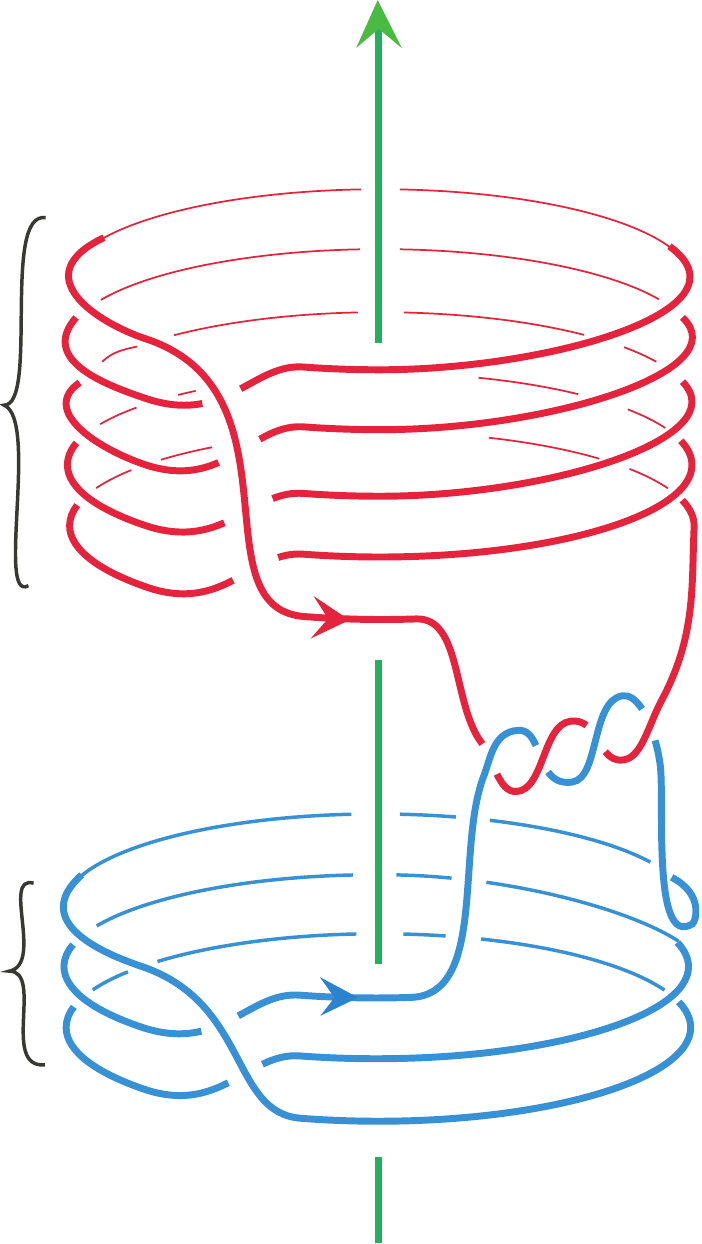}
\put(-122,42){\small$q$}
\put(-122,133){\small$p$}
\put(-25,90){\small$r$}
\put(-1,20){$X$}
\put(3,157){$Y$}
\put(-65,180){$Z$}
\caption{A generic link $L$ representing $L_{pqr}$}
\label{fig:genericLpqr}
\end{figure}

Observe that one need not actually construct a link homotopy carrying $L_{pqr}$ to $L$, but need only check that the pairwise and triple linking numbers of $L_{pqr}$ and $L$ coincide, and then appeal to Milnor's classification.  The verification that $\mu(L)=0$ is completely analogous to the calculation for $\mu(L_{pqr})$, using the obvious stacked disks, joined by half twisted bands, as Seifert surfaces for $X$ and $Y$, and a hemispherical page in the open book on $S^3$ as a Seifert surface for $Z$. 

Now the bicycles in $L$ are easily identified.  There are $d = \gcd(p,q)$ of them, all of longitudinal degree $\ell = \lcm(p,q)$, and all but one of meridional degree $0$, the remaining one being of degree $r$.  The parametrizing icycles are all parallel to a $(p/d,q/d)$ torus knot in the $2$-torus, with vertical winding numbers all equal to zero, except one equal to $r$.   By the bicycle theorem, these icyles form a diagram for the Pontryagin link of $h_L$, with global framing $-(pq+r)$.  This is exactly the situation described in \exref{basecase}, and so  
$$
\nu(h_{L_{pqr}}) \ = \ \nu(h_L) \ = \ 0 \ \in \ \bz_{2\gcd(p,q,r)}
$$
as asserted.

\part{Proof of the inductive step of \thmref{A}}

First recall from \exref{delta} that any three-component link $L$ with pairwise linking numbers $p$, $q$ and $r$ is link homotopic to a link obtained from $L_{pqr}$ by a sequence of delta moves of the type shown in \textup{\figref{delta}} $($or its inverse$)$, by a result of Murakami and Nakanishi \cite{MurakamiNakanishi}.  It was shown in that example that each such move increases $\mu(L)$ by $1$.  

Now observe that such a delta move $\Delta:L\to\what L$ can be viewed as a double crossing change $\bd$, of the kind shown in \figref{double},
composed with an isotopy, as indicated in \figref{deltadouble}.  By \corref{double}, this double crossing change increases $\nu(L)$ by $2$, and so $\Delta$ does the same. 

\begin{figure}[h!]
\includegraphics[height=180pt]{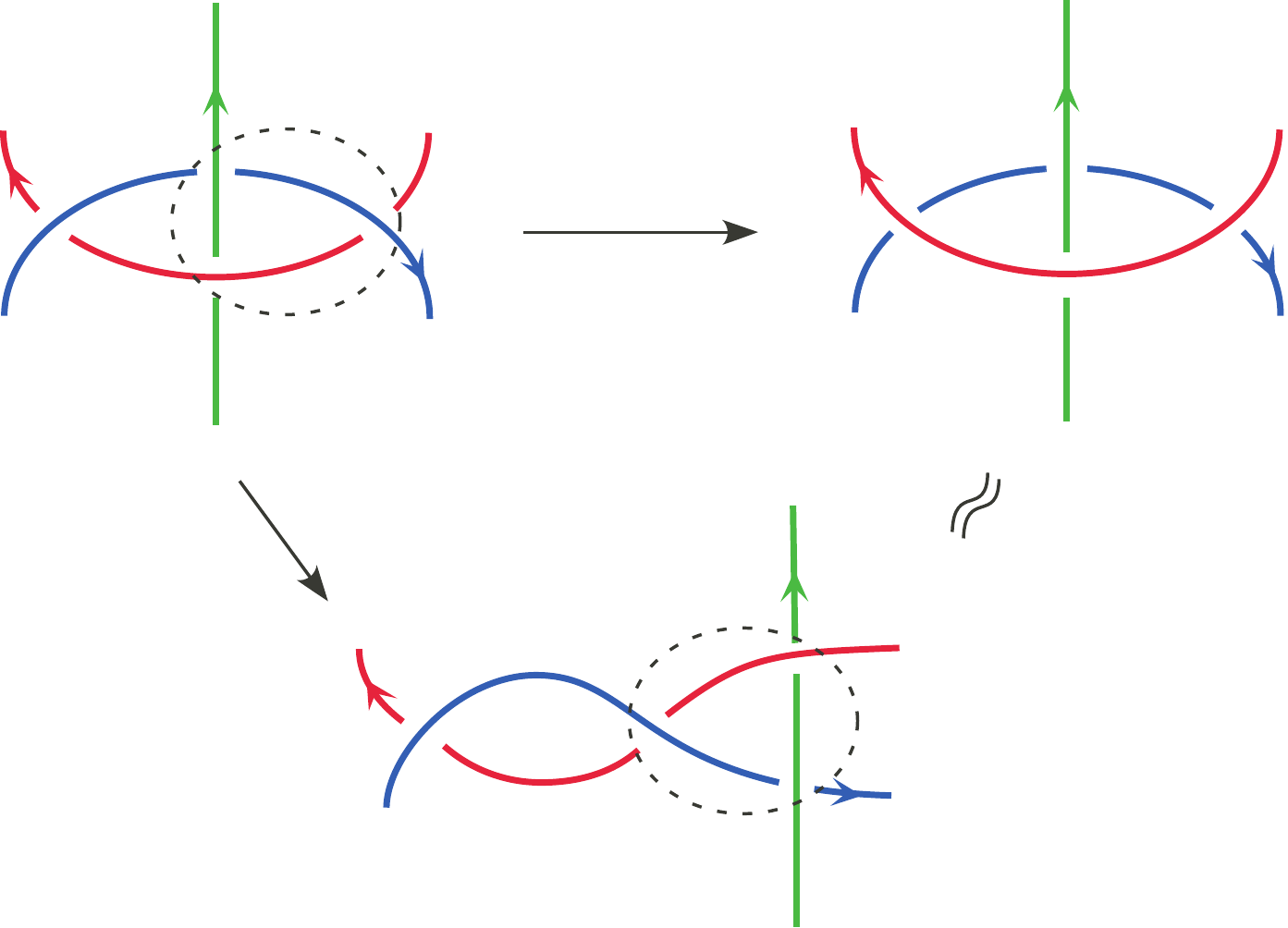}
\put(-55,72){ isotopy}
\put(-132,142){\small$\bd$}
\put(-210,68){\small$\Delta$}
\put(-270,135){$L$}
\put(-255,107){\small$X$}
\put(-170,160){\small$Y$}
\put(12,135){$\what L$}
\put(-140,5){$\what L$}
\caption{Another view of the delta move}
\label{fig:deltadouble}
\end{figure}

Since this argument applies to an arbitrary delta move, it follows that $\nu(L) = 2\mu(L)$ for {\it all} three component links $L$.  This completes the proof of \thmref{A}. 


\section{Sketch of an algebraic proof of \thmref{A}}\label{sec:A'}

In this section we sketch an entirely different proof of the theorem 
using string links and maps of the $2$-torus to the $2$-sphere.  The reader is referred to our paper \cite{DGKMSV} for more details.  
 
The proof is organized around the following key diagram, in which the left half, devoid of algebraic structure, represents the topological problem we are trying to solve, while the right half represents the algebraic structures that we impose on the left half via the two horizontal maps in order to solve the problem. 

\begin{equation*}\label{eqn:keydiagram}
\begin{diagram}[size=3em]
\fbox{\parbox{0.24\linewidth}{\begin{center} \small Three-component \\ links in $S^3$ \\ up to link homotopy\end{center}}}  
&&& 
\lTo^{\text{closing up}} 
&&& 
\fbox{\parbox{0.24\linewidth}{\begin{center} \small Three-component \\ string links \\ up to link homotopy\end{center}}}    
\\ \\
\dTo^{\text{characteristic}\atop\text{maps}}_{\ g} 
&&&&&& 
\dTo_{\text{makes diagram}\atop\text{commutative}}  
\\ \\
\fbox{\parbox{0.24\linewidth}{\begin{center} 
\small Maps of $\tor$ to $S^2$ \\ up to homotopy\end{center}}}  
&&& 
\lTo_{\text{natural map}} 
&&& 
\fbox{\parbox{0.24\linewidth}{\begin{center} 
\small Fundamental groups \\ of spaces of maps \\ of $T^2$ to $S^2$\end{center}}}  \\ 
\end{diagram}
\end{equation*}

\

In the upper left corner of the diagram we have the set of link homotopy classes of three-component links in the $3$-sphere $S^3$, and in the lower left corner the set of homotopy classes of maps of the $3$-torus $\tor$ to the $2$-sphere $S^2$.  The vertical map $g$ between them assigns to the link homotopy class of $L$ the homotopy class of its characteristic map $g_L$.  \thmref{A} describes $g$ and asserts that it is one-to-one.

A $k$-component {\itb string link} consists of $k$ disjoint, oriented, properly embedded arcs in a cube, with their tails on the bottom face and their tips on the top face directly above their tails. The product of two such string links with endpoints in a common position is given by stacking the second one on top of the first.  When a string link moves by a link homotopy, each strand is allowed to cross itself, while different strands must remain disjoint, just as for links.  Then the above product induces a group structure on the set of link homotopy classes of $k$-component string links.

In the upper right corner of the diagram we have the group of link homotopy classes of three-component string links.  Following Habegger and Lin \cite{HabeggerLin}, an explicit presentation for this group can be given in which the pairwise linking numbers and the Milnor $\mu\text{-invariant}$  figure prominently. 
 
The upper horizontal map denotes the operation of closing up a string link to a link.  It is easily shown that this map is onto, and that point inverse images are conjugacy classes of string links,  a special circumstance for links with three components which fails for four or more components.  

We see in the figure below that the string link on the right closes up to the Borromean rings on the left.

\begin{figure}[h!]
\includegraphics[height=110pt]{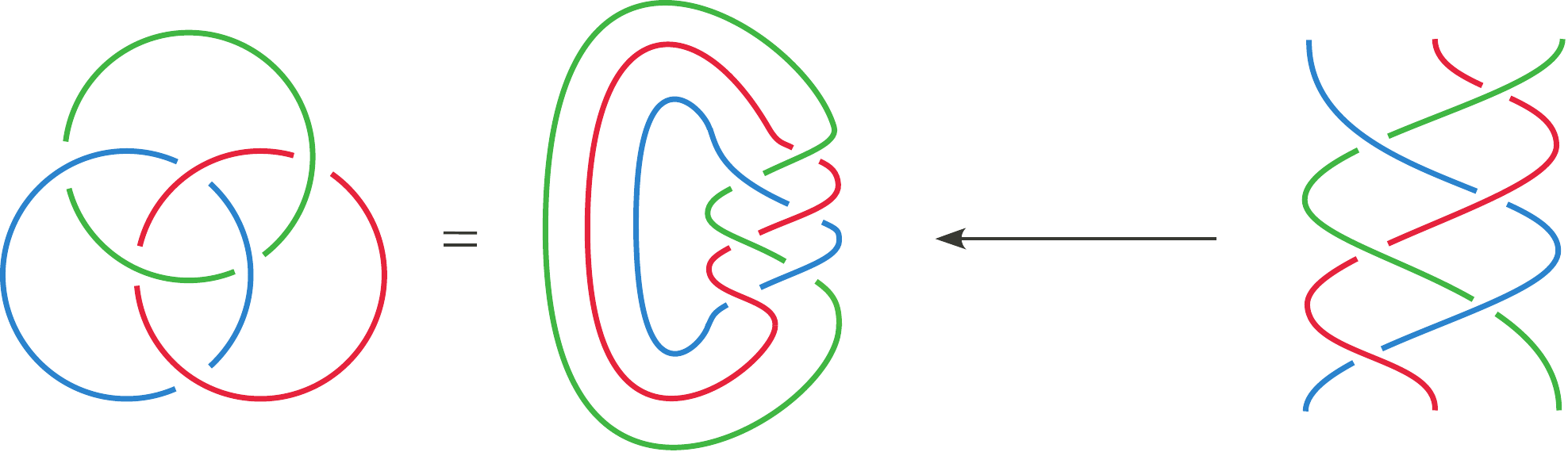}
\put(-136,58){closes up}
\caption{Borromean rings as the closure of a string link}
\label{fig:br}
\end{figure}

\noindent Thus the Borromean rings, a ``primitive example" in the world of links, is the closure of a string link which is itself a commutator of simpler string links. 
 
In the lower right corner of the diagram we have the union of the fundamental groups of the components of the space of continuous maps of the $2$-torus $T^2$ to the $2$-sphere $S^2$, one for each possible degree.  The work of Fox \cite{Fox} on torus homotopy groups (the case $p = 0$) and its generalization by Larmore and Thomas \cite{LarmoreThomas} provide explicit presentations for these groups in which the degrees on subtori and the Pontryagin $\nu$-invariant figure prominently. 
 
The lower horizontal map takes the homotopy class of a based loop in the space of maps of $T^2$ to $S^2$, ignores base points, and interprets this as a homotopy class of maps of $\tor$ to $S^2$ in the usual way.  The resulting map is onto, and point inverse images are conjugacy classes in the various fundamental groups.
 
Finally, the vertical map on the right side is defined to make the diagram commute.  We have a group in the upper right corner, but demote it to a union of groups to match what is below it, so that the vertical map can be regarded as a union of group homomorphisms.  Using the explicit presentations of domain and range, one can determine the effects of this map on generators, and then use the commutativity of the diagram to show that the vertical map $g$ on the left is precisely as described in \thmref{A}.


\section{Proof of \thmref{B}, formulas (1) and (2)}\label{sec:B}

\thmref{B} provides three formulas for Milnor's $\mu$-invariant of a three-component link $L$ in $S^3$ whose pairwise linking numbers vanish:

$(1)$\ (differential forms) \  
$\mu(L) \ = \ \displaystyle{\frac12\, \int_{T^3} \delta(\varphi*\omega_L) \wedge \omega_L}$

$(2)$\ (vector fields) \ 
$\mu(L) \ = \ \displaystyle{\frac12\, \int_{\tor\times\tor} \vec_L(\x) \times \vec_L(\y) \dtw 
\nabla_{\!\y}\,\varphi\left(\x - \y\right)\ d\x\, d\y}$

$(3)$\ (Fourier series) \ 
$\mu(L) \ = \  \displaystyle{8\pi^3 \sum_{\n\ne\zero} \a_\n\times \b_\n\dtw \n/\|\n\|^2}$

\noindent
The notation is explained in the introduction.

In the subsections that follow, we obtain explicit expressions for the characteristic $2\textup{-form}$ $\omega_L$ and vector field $\vec_L$, and for the fundamental solution $\ph$ to the scalar Laplacian on $\tor$, and then establish formula (1) by making use of J.\,H.\,C.~Whitehead's integral formula for the Hopf invariant. Then, we show how to obtain formula (2) from formula (1). Finally, in 
\secref{B'} we review the calculus of differential forms and Fourier analysis on $T^3$, in order to make explicit the role and form of $\ph$ and obtain formula (3). 

\part{Explicit formula for the 2-form $\omega_L$ and the vector field $\vec_L$ on $T^3$}

Recall the formula for the characteristic map $g_L\colon T^3\to S^2$ of the link $L$ given in \propref{char},
$$
g_L(s,t,u) \ = \ \frac{F(x,y,z)}{\| F(x,y,z)\|},
$$
where $x=x(s)$, $y=y(t)$ and $z=z(u)$ parametrize the components of $L$,
and where $F$ was given in \secref{characteristic} as
$$
F(x,y,z) \ = \  (ix\dt y + iy\dt z + iz\dt x \ , \  jx\dt y + jy\dt z + jz\dt x \ , \  kx\dt y + ky\dt z + kz\dt x).
$$
Here we view $x$, $y$ and $z$ as quaternions for the multiplication and as vectors in $\reals^4$ when performing the dot product.   For simplicity, we write $g_L=F/\| F\|$, suppressing the appearance of the evaluation map $e_L(s,t,u)=(x(s),y(t),z(u))$.

Again let $\omega$ be the Euclidean area 2-form on the unit 2-sphere 
$S^2\subset\reals^3$, normalized so that the total area is 1 instead of $4\pi$.
If $\p$ is a point of $S^2$, and $\a$ and $\b$ are tangent vectors to $S^2$
at $\p$, then 
$$
\omega_{\p}(\a,\b) \ = \ \frac1{4\pi}(\a\times\b)\dtw\p.
$$
This 2-form $\omega$ on $S^2$ extends to a closed 2-form $\bah{\omega}$ in 
$\reals^3-\zero$ given by
$$
\bah{\omega}_{\p}(\a,\b) \ = \ {(\a\times\b)\dt\p\over 4\pi\|\p\|^3},
$$
which is the pullback of $\omega$ from $S^2$ to $\reals^3-\zero$ via the 
map $\p\mapsto \p/\|\p\|$.

Hence the pullback $g_L^\dast\omega$ of $\omega$ from $S^2$ to $T^3$ via
$g_L=F/\| F\|$ is the same as the pullback $F^\dast\bah{\omega}$
of $\bah{\omega}$ from $\reals^3-\zero$ to $T^3$ via $F$.

Write
$$g_L^\dast\omega \ = \ F^\dast\bah{\omega} 
\ = \ a(s,t,u)dt\wedge du+b(s,t,u)du\wedge ds+c(s,t,u)ds\wedge dt.$$
Then we have
$$
\aligned
a(s,t,u)& \ = \  F^\dast\bah\omega({\partial_t},{\partial_u}) \ = \  \bah\omega(F_\dast{\partial_t}, F_\dast{\partial_u}) \\
& \ = \ \bah{\omega}(F_t,F_u) \ = \  \frac{(F_t\times F_u)\dt F}{4\pi\| F\|^3},
\endaligned
$$
and likewise for $b(s,t,u)$ and $c(s,t,u)$, where the subscripts on $F$ denote
partial derivatives.

Therefore, the characteristic 2-form of the link $L$ is
$$\omega_L \ = \  g_L^\dast\omega \ = \ 
\frac{F_t\times F_u\dt F}{4\pi\| F\|^3}  \, dt\wedge du
+\frac{F_u\times F_s\dt F}{4\pi\| F\|^3} \, du\wedge ds
+\frac{F_s\times F_t\dt F}{4\pi\| F\|^3} \, ds\wedge dt,$$
and its corresponding characteristic vector field is 
$$\vec_L \ = \ 
\frac{F_t\times F_u\dt F}{4\pi\| F\|^3} \, \partial_s
+\frac{F_u\times F_s\dt F}{4\pi\| F\|^3} \, \partial_t
+\frac{F_s\times F_t\dt F}{4\pi\| F\|^3} \, \partial_u.$$

\part{Proof of \thmref{B}, formula (1)}

Let $L$ be a three-component link in the 3-sphere $S^3$ with pairwise
linking numbers $p$, $q$ and $r$ all zero.  By the first part of \thmref{A} these numbers are the degrees of the characteristic map $g_L\colon T^3\to S^2$ on the $2$-dimensional coordinate subtori. Since these degrees are all zero, $g_L$ is homotopic to a map $g\colon T^3\to S^2$ which collapses the 2-skeleton of $T^3$ to a point:
$$
g_L\ \simeq \ g\colon T^3 \stackrel{\sigma}{\longrightarrow}S^3 \stackrel{f}{\longrightarrow}S^2,
$$
where $\sigma$ is the collapsing map.  By the second part of \thmref{A}, Milnor's $\mu$-invariant of $L$ is equal to half of Pontryagin's $\nu$-invariant $\nu(g_L)$ (comparing $g_L$ to the constant map $f_{000}$ as explained in \secref{pontryagin}), which in turn is just the Hopf invariant of $f\colon S^3\to S^2$,
$$
\mu(L)\ = \ {1\over 2}\nu(g_L)\ = \ {1\over 2}\Hopf(f).
$$

We can thus use J.\,H.\,C.~Whitehead's integral formula for the Hopf invariant as the first ingredient in our formula for the $\mu$ invariant.  Starting from 
Hopf's definition of his invariant of a map $f\colon S^3\to S^2$ as the linking number between the inverse images of two regular values, Whitehead \cite{Whitehead} found an integral formula for $\Hopf(f)$ as follows.

Let $\omega$ be the area 2-form on $S^2$, normalized so that $\int_{S^2}
\omega=1$. Its pullback $f^\dast\omega$ is a closed 2-form on $S^3$,
which is exact because $H^2(S^3;\reals)=0$. Hence $f^\dast\omega=d\alpha$ for some 1-form $\alpha$ on $S^3$, and Whitehead showed that the Hopf invariant of $f$ is given by the formula
$$
\Hopf(f) \ = \ \int_{S^3}\alpha\wedge f^\dast\omega \ 
= \ \int_{S^3}\alpha\wedge d\alpha,
$$
the value of the integral being independent of the choice of $\alpha$. 

To make Whitehead's formula explicit requires a way to produce a 1-form
$\alpha$ whose differential $d\alpha$ is a given 2-form $f^\dast\omega$ on $S^3$ which is known to be exact. We give an explicit formula for this in the 
Appendix, since it is not needed in the body of the paper. Instead, we are
going to pull the whole situation back to $T^3$ and perform our calculations
there.

In particular, the formula
$$
\Hopf(f) \ = \  \int_{S^3}\alpha_0\wedge f^\dast\omega,
$$
where $\alpha_0$ is any 1-form on $S^3$ such that $d\alpha_0=f^\dast\omega$, pulls back to the formula
$$
\nu(g_L)\ = \ \int_{T^3}\bah{\alpha_0}\wedge g^\dast\omega,
$$
where $\bah{\alpha_0}$ is any 1-form on $T^3$ such that $d\bah{\alpha_0} = g^\dast\omega$.  One such choice of $\bah{\alpha_0}$ would be $\bah{\alpha_0} = \sigma^\dast\alpha_0$, with $\alpha_0$
obtained as in the Appendix.   However, since we don't have an explicit analytic formula
for $\sigma$, and since the formula for $\alpha_0$ is complicated, we will pursue 
a slightly different approach.

Taking advantage of the fact that $g_L$ is homotopic to $g$, we also have
$$
\nu(g_L) \ = \  \int_{T^3}\alpha\wedge g_L^\dast\omega \ = \ \int_{T^3}\alpha\wedge \omega_L
$$
for any $1$-form $\alpha$ on $T^3$ such that $d\alpha=\omega_L$.  Therefore,
what we really need is a canonical way to produce a $1$-form $\alpha$ on $T^3$ whose exterior derivative is the exact 2-form $\omega_L$.   We will prove in \propref{alp} that the $1$-form 
$$
\alpha_L \ = \ \delta(\ph * \omega_L)
$$
has the desired property that $d\alpha_L=\omega_L$.  Moreover, if $\widetilde\alpha$ is any other $1$-form such that $d\widetilde\alpha=\omega_L$, then $\|\alpha_L\| \le \|\widetilde\alpha\|$ in $L^2(\tor)$, with equality if and only if $\widetilde\alpha=\alpha_L$. 

We thus obtain the explicit integral formula for Milnor's $\mu$-invariant of the three component link $L$:
$$
\mu(L) \ = \ {1\over 2}\nu(g_L) \ = \ {1\over 2}\int_{T^3}\delta(\ph * \omega_L)\wedge\omega_L,
$$
which is formula (1) of \thmref{B}.


\part{Proof of  \thmref{B}, formula (2)}

The 1-form $\delta(\ph * \omega_L)$ on $T^3$ which appears in formula (1) of \thmref{B}
converts to the vector field $\nabla\times(\ph * \vec_L)$, which can be regarded as the magnetic field 
on $T^3$ due to the current flow $\vec_L$. The customary minus sign here is now hidden in the 
definitions of the Laplacian and its inverse (Green's operator).

The integral formula for Milnor's $\mu$-invariant given in formula (1),
$$
\mu(L) \ = \ {1\over 2}\int_{T^3}\delta(\ph * \omega_L)\wedge\omega_L\,,
$$
then converts to the formula
$$
\mu(L) \ = \ {1\over 2}\int_{T^3}\left(\vphantom{x^2}\nabla\times(\ph * \vec_L)\right)\dt\vec_L\,d\vol
$$
in the language of vector fields. To obtain formula (2) of \thmref{B}, we first expand out the convolution integral
$$
(\ph * \vec_L)(\x) \ = \ \int_{T^3}\vec_L(\y)\,\ph(\x-\y)\,d\y.
$$
Then we compute its curl:
$$
\begin{aligned}
\left(\vphantom{x^2}\nabla_{\!\x}\times(\ph * \vec_L)\right)(\x)
\ &= \  \int_{T^3}\nabla_{\!\x}\times\left(\vphantom{x^2}\vec_L(\y)\,\ph(\x-\y)\right)\,d\y  \\
\ &= \  -\int_{T^3}\vec_L(\y)\times\nabla_{\!\x}\,\ph(\x-\y)\,d\y\,,
\end{aligned}
$$
using the product formula $\nabla\times(f\veca)=f(\nabla\times \veca)-\veca\times\nabla f$.
Note that $\nabla_{\!\x}\times\vec_L(\y)=\zero$ since, from the point of view of the variable
$\x$, the vector field $\vec_L(\y)$ is constant.

Inserting the expression for the curl of $\ph*\vec_L$ into the formula for $\mu(L)$, we get
$$
\aligned
\mu(L)
&\ = \ -\frac12\int_{T^3\times T^3}\left(\vphantom{x^2}\vec_L(\y)\times\nabla_{\!\x}\,\ph(\x-\y)\right)\dtw
\vec_L(\x)\,d\x\,d\y \\
&\ = \ -\frac12\int_{T^3\times T^3}\vec_L(\x)\times\vec_L(\y)\dtw\nabla_{\!\x}\,\ph(\x-\y)\,d\x\,d\y\\
&\ = \ \frac12\int_{T^3\times T^3}\vec_L(\x)\times\vec_L(\y)\dtw\nabla_{\!\y}\,\ph(\x-\y)\,d\x\,d\y\,,
\endaligned
$$
where at the last step we hid the minus sign by taking the gradient of $\ph$ with respect to
$\y$ instead of $\x$, and obtained
formula (2) of \thmref{B}.


\section{Fourier series and the Proof of \thmref{B}, formula (3)}\label{sec:B'}

\vskip -.2in
\vskip -.2in
\part{Fourier series and the fundamental solution of the Laplacian}

In the proofs of formulas (1) and (2) of \thmref{B}, we needed to find a 1-form $\alpha$ on $T^3$ whose exterior derivative is the exact 2-form $\omega_L$ associated with the three-component link $L$ in $S^3$. We asserted that we can choose  $\alpha=\delta(\ph * \omega_L)$, where $\ph$ is the fundamental solution of the scalar Laplacian on $T^3$. Furthermore, we asserted that this choice of $\alpha$ is canonical in the sense that it has the smallest $L^2$ norm among all possible choices. We justify these assertions in this section and lay the groundwork for the proof of formula (3) of \thmref{B} by studying the calculus of differential forms on $T^3=(\br/2\pi\bz)^3  $ in terms of their Fourier series. 

We will prove the following two results:

\begin{proposition}\label{prop:fs}
\textbf{\boldmath  \ \ The fundamental solution of the scalar Laplacian on the\\
 $3$-torus $T^3 = (\br/2\pi\bz)^3$ is given by the formula
$$
\varphi(\x) \ = \
{1\over 8\pi^3}\sum_{\n\ne\zero}
e^{i\n\dte\x} / \|\n\|^2.
$$
The function $\varphi$ is $C^\infty$ at all points $
\x\in T^3$ except $\zero$, where it becomes infinite.} 
\end{proposition}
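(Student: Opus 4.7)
The plan is to work entirely in the Fourier picture on $T^3 = (\br/2\pi\bz)^3$. The first step is to interpret what ``fundamental solution of the scalar Laplacian'' means on a closed manifold, where $\Delta$ has a nontrivial kernel (the constants). The correct equation, normalized by demanding $\int_{T^3}\ph\,d\x = 0$, is
\[
\Delta \ph \ = \ \delta_\zero \,-\, \tfrac{1}{8\pi^3},
\]
where the constant $1/\vol(T^3) = 1/8\pi^3$ is forced on the right-hand side so that both sides integrate to zero against the constant function. I adopt the geometer's sign convention $\Delta = d\delta + \delta d = -\sum \partial^2/\partial x_i^2$ on functions, so that $\Delta e^{i\n\dte\x} = \|\n\|^2 e^{i\n\dte\x}$.

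Next, I would expand both sides in Fourier series. Orthogonality of the characters $\{e^{i\n\dte\x}\}_{\n\in\bz^3}$ in $L^2(T^3)$ gives the distributional identity $\delta_\zero = (2\pi)^{-3}\sum_{\n\in\bz^3} e^{i\n\dte\x}$, hence
\[
\delta_\zero - \tfrac{1}{8\pi^3} \ = \ \tfrac{1}{8\pi^3}\sum_{\n\ne\zero} e^{i\n\dte\x}.
\]
Writing $\ph = \sum_\n c_\n e^{i\n\dte\x}$ and matching Fourier coefficients forces $c_\zero = 0$ and $c_\n = 1/(8\pi^3\|\n\|^2)$ for $\n\ne\zero$, which is exactly the formula in the statement. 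To legitimize the formal computation, I would note that $\sum_{\n\ne\zero}\|\n\|^{-4}$ converges in dimension three (by comparison with $\int_1^\infty r^{-2}\,dr$), so the series defines a genuine element of $L^2(T^3)$, and the equation $\Delta\ph = \delta_\zero - (8\pi^3)^{-1}$ then holds as an equality of tempered distributions; the termwise action of $\Delta$ is justified by pairing against smooth test functions, which collapses both sides to rapidly decaying Fourier sums.

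Finally I would handle the regularity claims. On the open set $T^3 \setminus \{\zero\}$, the equation reduces to $\Delta\ph = -(8\pi^3)^{-1}$, whose right-hand side is smooth, so elliptic regularity (or Weyl's lemma) yields $\ph \in C^\infty(T^3 \setminus \{\zero\})$. For the blow-up at $\zero$, I would compare with the Euclidean fundamental solution $G(\x) = 1/(4\pi\|\x\|)$ on $\br^3$, which satisfies $\Delta G = \delta_\zero$. Identifying a small ball $B$ around $\zero$ in $T^3$ with a ball in $\br^3$, the difference $\ph - G$ satisfies $\Delta(\ph - G) = -(8\pi^3)^{-1}$ distributionally on $B$, and is therefore smooth there by elliptic regularity; consequently $\ph$ has exactly the local singularity $1/(4\pi\|\x\|)$ at $\zero$ and, in particular, becomes infinite there.

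The main obstacle is that the Fourier series for $\ph$ does not converge absolutely, let alone pointwise, near $\zero$, so the identities must be interpreted distributionally rather than term-by-term in the classical sense. Once $L^2$-convergence of the series is combined with the ellipticity of $\Delta$, both the smoothness away from $\zero$ and the precise local behavior at $\zero$ drop out with essentially no further work; the remainder of the argument is routine Fourier manipulation.
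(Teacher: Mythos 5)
Your proof is correct and follows essentially the same route as the paper: both arguments reduce to matching Fourier coefficients (forcing $c_\n = 1/(8\pi^3\|\n\|^2)$ for $\n\ne\zero$ and $c_\zero=0$), observe $L^2$-convergence from $\sum\|\n\|^{-4}<\infty$, and invoke elliptic regularity for smoothness away from $\zero$; your distributional characterization $\Delta\ph=\delta_\zero - 1/8\pi^3$ is equivalent to the paper's convolution-inverse definition $\Delta(\ph*f)=f$ for mean-zero $f$, since convolving the former with $f$ yields the latter. Your comparison with the Euclidean kernel $1/(4\pi\|\x\|)$ is a small addition the paper omits, and it actually gives a cleaner justification of the claim that $\ph$ becomes infinite at $\zero$.
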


\bigskip

\begin{proposition}\label{prop:alp}
\textbf{\boldmath  If $\omega$ is any exact differential form on $T^3$ with $C^\infty$ coefficients, then 
$$
\alpha \ = \ \delta(\ph * \omega)
$$
is a $C^\infty$ differential form satisfying $d\alpha=\omega$.  Furthermore, if $d\widetilde\alpha=\omega$ as well, then $\Vert\alpha\Vert_{L^2}\le\Vert\widetilde\alpha\Vert_{L^2}$, with equality if and only if $\widetilde\alpha=\alpha$.} 
\end{proposition}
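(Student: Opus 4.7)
The plan is to use the Fourier expansion of $\varphi$ from \propref{fs} to interpret the convolution operator $\varphi *$ as the Green's operator inverting the Hodge Laplacian $\Delta = d\delta + \delta d$ modulo the harmonic (constant-coefficient) forms. Working coefficient-wise on $k$-forms and using the eigenvalue relation $\Delta e^{i\n\dte\x} = \|\n\|^2 e^{i\n\dte\x}$, one reads off the identity
$$
\Delta(\varphi*\omega) \ = \ \omega - H\omega,
$$
where $H\omega$ denotes the harmonic projection of $\omega$, i.e.\ the sum of its $\n=\zero$ Fourier modes. Since $\omega$ is exact, it is $L^2$-orthogonal to every harmonic form $\eta$ --- writing $\omega = d\beta$ gives $\langle d\beta,\eta\rangle = \langle\beta,\delta\eta\rangle = 0$ on the closed manifold $T^3$ --- so $H\omega = 0$, and hence $\Delta(\varphi*\omega) = \omega$.

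Convolution commutes with exterior differentiation, since $d$ passes through the convolution integral coefficient-wise, so $d(\varphi*\omega) = \varphi * d\omega = 0$, which forces $\delta d(\varphi*\omega) = 0$. Combining this with the previous identity yields
$$
d\alpha \ = \ d\delta(\varphi*\omega) \ = \ \Delta(\varphi*\omega) \ = \ \omega,
$$
as required. Smoothness of $\alpha$ follows from Fourier estimates: the coefficients of the components of $\omega$ decay faster than any polynomial in $\|\n\|$, those of $\varphi * \omega$ arise by dividing by $\|\n\|^2$ and so decay as well, and applying the first-order operator $\delta$ preserves this rapid decay.

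For the minimization, suppose $\widetilde\alpha$ is any other form with $d\widetilde\alpha = \omega$. Then $d(\widetilde\alpha - \alpha) = 0$, so $\widetilde\alpha - \alpha$ is closed, while $\alpha$ is by construction co-exact. These two subspaces are $L^2$-orthogonal on $T^3$: for any closed $\gamma$ and any form $\eta$,
$$
\langle \delta\eta,\gamma\rangle \ = \ \langle \eta,d\gamma\rangle \ = \ 0.
$$
The Pythagorean theorem then yields $\|\widetilde\alpha\|_{L^2}^2 = \|\alpha\|_{L^2}^2 + \|\widetilde\alpha-\alpha\|_{L^2}^2$, with equality if and only if $\widetilde\alpha = \alpha$.

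The main obstacle, such as it is, is bookkeeping: one must verify that the ``harmonic projection'' appearing in the Fourier calculation really coincides with the Hodge-theoretic one, and that exactness on a compact manifold annihilates this component. Both facts are standard consequences of the Hodge decomposition on the flat torus, but they are the crux of why the compact formula $\alpha = \delta(\varphi*\omega)$ simultaneously inverts $d$ on exact forms and minimizes $L^2$ norm.
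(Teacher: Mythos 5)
Your proof is correct and follows essentially the same route as the paper: both invert the Laplacian via convolution with $\ph$ on the Fourier side, kill the harmonic term using exactness, observe that $\delta d(\ph*\omega)=0$ so that $\Delta(\ph*\omega)=d\delta(\ph*\omega)$, and obtain the $L^2$ minimality from the orthogonality of $\im\delta$ and $\ker d$ plus the Pythagorean theorem. The only cosmetic difference is that the paper deduces $d(\ph*\omega)=0$ by noting from its Fourier tables that $\Gr(\omega)$ is itself exact, whereas you use the commutation of convolution with $d$ together with $d\omega=0$; both are valid.
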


Before diving into calculations, we pause for some words of explanation: We write $\Omega^k(T^3)$ for the space of $C^\infty$ $k$-forms on $T^3$. With $d$ as the exterior differentiation operator taking $\Omega^k(T^3)$ to $\Omega^{k+1}(T^3)$, and $\delta$ the co-differentiation map adjoint to  $d$ in the $L^2$ sense, the Laplacian of a $k$-form $\alpha$ is 
$$
\Delta\alpha \ = \ (d\delta+\delta d)\alpha.
$$
This definition gives us the ``geometer's sign convention'' for the Laplacian on functions (0-forms):
$$
\Delta f \ = \  -\left({\partial^2 f\over\partial x_1^2}+{\partial^2 f\over\partial x_2^2} +
{\partial^2 f\over\partial x_3^2}\right).
$$

\part{Proof of \propref{fs}}

The \it fundamental solution \rm of the scalar Laplacian is a function $\varphi$, 
convolution with which ``inverts'' the Laplacian to the extent that this is possible. On $T^3$, only 
functions that integrate to zero are in the range of the Laplacian, and so ``the'' fundamental 
solution of $\Delta$ on $T^3$ is the function $\varphi$ which satisfies
$$
\int_{T^3}\varphi\,d\!\vol \ = \  0
\qquad\textup{and}\qquad 
\Delta(\varphi * f) \ = \ f
$$
for all $f\in C^\infty(T^3)$ such that $\displaystyle{\int_{T^3}f\,d\!\vol=0}$.

Even though we have expressed $\varphi$ in terms of complex exponentials, 
the value of $\varphi$ is real for real values of $\x$ because of the symmetry of the coefficients.

\figref{fundsol} shows the graph of the corresponding fundamental solution
$$\varphi(\x) \ = \
{1\over 4\pi^2}\sum_{\n\ne\zero} e^{i\n\dte \x} /  \|\n\|^2 
$$
of the scalar Laplacian on the 2-torus $S^1 \times S^1$, summed for $|\n| \leq 10$, and displayed over the range $[-3\pi,3\pi]\times[-3\pi,3\pi]$.

\begin{figure}[h!]
\includegraphics[height=260pt]{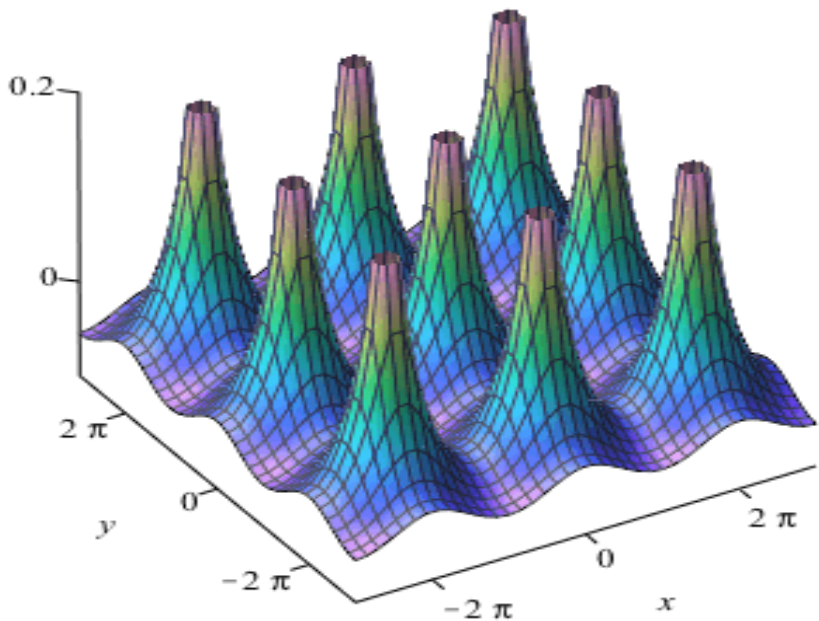}
\caption{Fundamental solution of the scalar Laplacian on $S^1 \times S^1$}
\label{fig:fundsol}
\end{figure}

With respect to the $L^2$ inner product
$$
\langle f,g\rangle \ = \  \int_{T^3}f(\x)\,\overline{g(\x)}\,d\x\,,
$$
the set of complex exponentials $\{e^{i\n\dte\x} \st \n\in\bz^3\}$ is an orthogonal set, and for all $\n$, 
we have $\langle e^{i\n\dte\x},e^{i\n\dte\x}\rangle = 8\pi^3$. 

Any function $f$ in $L^2(T^3)$ can be expanded into a Fourier series
$$
f\ \sim \ \sum_{\n\in\ints^3} c_{\n}e^{i\n\dte \x}$$
where 
$$
c_\n\ = \ {1\over 8\pi^3}\langle f,e^{i\n\dte\x}\rangle \ = \  {1\over 8\pi^3}\int_{T^3} f(\x)\,e^{-i\n\dte\x}\,d\x.
$$
Moreover, by Plancharel's theorem, the $L^2(T^3)$-norm of $f$ is equal to the $\ell^2$-norm of the 
sequence of coefficients:
$$
\Vert f\Vert_{L^2} \ = \  \int_{T^3}|f(x)|^2\,d\x \ = \  \sum_{\n\in\ints^3} 8\pi^3|c_\n|^2.
$$
This shows that the function $\varphi$ of \propref{fs} is in $L^2(T^3)$, since $\|c_\n\|^2$ is of order $1/\|\n\|^4$, while the number of lattice points at distance $\|\n\|$
from $\zero$ is of order $\|\n\|^2$. 

Because the range of the scalar Laplacian is the (closed) subspace of functions $f$ for which 
$$
\int_{T^3} f(\x)\,d\x\ = \ 0\,,
$$
we will denote this subspace of $L^2(T^3)$ by $L^2_0(T^3)$, and similarly for other function spaces, where a zero subscript will indicate that all functions in the space have average value zero.  In terms of Fourier coefficients, $f\in L^2_0(T^3)$ if and only if the Fourier coefficient $c_\zero=c_{(0,0,0)}=0$.  

For functions $f$ and $g$ in $C^\infty(T^3)$, their {\itb convolution}
$f*g$ is defined by
$$
\left(\vphantom{x^2}f*g\right)(\x)\ = \ \int_{T^3}f(\y)\,g(\x-\y)\,d\y\,,
$$
and also lies in $C^\infty(T^3)$.  One checks easily that $
e^{i\m\dte\x}*e^{i\n\dte\x} = 0$ if $\m\neq \n$, and 
$$
e^{i\n\dte\x}*e^{i\n\dte\x}\ = \ 8\pi^3 e^{i\n\dte\x}.
$$
Therefore, if $f$ and $g$ have Fourier series $\sum c_{\n}e^{i\n\dte\x}$ and
$\sum d_{\n}e^{i\n\dte\x}$ respectively, then the Fourier series of $f*g$ is
$$
8\pi^3\sum_{\n\in\ints^3}c_{\n}d_{\n}e^{i\n\dte\x}.
$$

From this
it follows that the operation of convolution satisfies $f*g=g*f$
and $(f*g)*h=f*(g*h)$ whenever all the convolutions are defined. Furthermore,
$$
{\partial\over \partial x_k}(f*g)\ = \ {\partial f\over \partial x_k}*g \ = \  f*{\partial g\over \partial x_k}.
$$

For $u\in C^\infty(T^3)$, with 
$$
u(\x)\ = \ \sum_{\n\in\ints^3} b_{\n}e^{i\n\dte\x}\,,
$$
we have
$$
\Delta u(\x)\ = \ \sum_{\n\neq\zero}|\n|^2b_{\n}e^{i\n\dte\x}.
$$
Because the $(0,0,0)$-coefficient of $\Delta u$ is zero, this shows why in order to solve $\Delta u=f$ we must restrict $f$ to be in $C^\infty_0(T^3)$.   And for $f\in C^\infty_0(T^3)$, with
$$
f(\x)\ = \ \sum_{\n\neq\zero}c_{\n}e^{i\n\dte\x},
$$
the unique solution $u\in C^\infty_0(T^3)$ of $\Delta u=f$ has $b_{\n}= c_\n/|\n|^2$,
and so, since 
$$
\ph(\x)\ = \ \frac{1}{8\pi^3}\sum_{\n\ne\zero}{e^{i\n\dte \x}}/{\|\n\|^2}\,,
$$
we conclude that $u=\varphi * f$.  This completes the proof of \propref{fs} except for the assertion about the smoothness of $\ph$ away from its singularity, which follows from well-known local regularity theorems for elliptic partial differential equations (see, for example, Folland~\cite{Folland},  Theorem 6.33).

\part{Fourier series and the calculus of differential forms on the 3-torus}

To prove \propref{alp}, we express the calculus of differential forms on $T^3$ in terms of Fourier series.  Except for the fundamental solution $\ph$ of the (scalar) Laplacian on $T^3$, we will assume all functions and forms are $C^\infty$.

We continue to express functions (0-forms) as Fourier series: for 
$\x=(s,t,u)$ and $\n=(n_1,n_2,n_3)$, we write
$$
f(\x) \ = \ \sum_{\n\in\ints^3}c_{\n}e^{i\n\dte\x}\ \in\ \Omega^0(T^3).
$$
Likewise, we employ the notation described in the introduction and express 1-forms using 
$\c_{\n}=(c_\n^{s},c_\n^{t},c_\n^{u})$
and $d\x=(ds,dt,du)$. We write
$$
\alpha(\x) \ = \ \sum_{\n\in\ints^3}\c_{\n}e^{i\n\dte\x}\dt d\x\ \in\ \Omega^1(T^3).
$$
With $\star d\x=(dt\wedge du,du\wedge ds,ds\wedge dt)$ as before, we can express a 2-form as
$$
\beta(\x) \ = \ \sum_{\n\in\ints^3}\c_{\n}e^{i\n\dte\x}\dt \star d\x\ \in\ \Omega^2(T^3).
$$
Finally, with $dV=ds\wedge dt\wedge du$, a 3-form can be written as
$$
\gamma(\x) \ = \ \sum_{n\in\ints^3}c_{\n}e^{i\n\dte\x}dV\ \in\ \Omega^3(T^3).
$$

It is straightforward to express both the exterior derivative $d$ and the
codifferential $\delta$ in terms of Fourier coefficients. With $f$, $\alpha$,
$\beta$ and $\gamma$ as above, we have
$$
\begin{array}{lll}
\displaystyle{df \ = \ \sum ic_{\n}\n e^{i\n\dte\x}\dt d\x} &
\mbox{\rm\ and\ } & \delta f \ = \ 0,\\[0.3cm]
\displaystyle{d\alpha \ = \ \sum i\n\times \c_\n e^{i\n\dte\x}\dt \star d\x} &
\mbox{\rm\ and\ } & \displaystyle{\delta\alpha \ = \ -\sum i\n\dt \c_\n e^{i\n\dte\x}},\\[0.3cm]
\displaystyle{d\beta \ = \ \sum i\n\dt\c_\n e^{i\n\dte\x}dV} &
\mbox{\rm\ and\ } & \displaystyle{\delta\beta \ = \ \sum i\n\times\c_\n e^{i\n\dte\x}
\dt d\x,}\\[0.3cm]
d\gamma \ = \ 0 & \mbox{\rm\ and\ } & 
\displaystyle{\delta\gamma \ = \ -\sum ic_\n \n e^{i\n\dte\x}\dt\star d\x}.
\end{array}
$$
Thus, exterior differentiation and co-differentiation are expressed in terms of vector algebraic operations on the Fourier coefficients.
From these expressions, we conclude the following about the kernel and image
of $d$ and $\delta$:

\begin{itemize}
\item For 0-forms, 
$$
\begin{aligned}
\ker d &\ = \ \{f \st c_\n=0\ \mbox{\rm for}\ \n\ne\zero,
\mbox{\rm\ while\ }
\ c_\zero
\ \mbox{\rm is arbitrary}\} \ \ \ \\[0.2cm]
\im d\, &\ = \  \{0\} \\[0.2cm]
\ker \delta &\ = \  \{\mbox{\rm all}\ f\} \\[0.2cm]
\im \delta\, &\ = \  \{f \st c_\zero=0\}
\end{aligned}
$$
\item For 1-forms,
$$
\begin{aligned}
\ker d &\ = \  \{\alpha \st \c_\n=\lambda_\n\n\ \mbox{\rm for}\ \n\ne\zero,
\mbox{\rm\ while\ }\c_\zero
\ \mbox{\rm is arbitrary}\} \\[0.2cm] 
\im d\, &\ = \  \{\alpha \st \c_\n=\lambda_\n\n\ \mbox{\rm for}\ \n\ne\zero,\ \c_\zero=\zero\} \\[0.2cm]
\ker \delta &\ = \  \{\alpha \st \c_\n\dt\n =0\ \mbox{\rm for}\ \n\ne\zero,
\mbox{\rm\ while\ }\c_\zero \ \mbox{\rm is arbitrary}\} \\[0cm]
\im \delta\, &\ = \  \{\alpha \st \c_\n\dt\n=0\ \mbox{\rm for}
\ \n\ne\zero,\ \c_\zero=\zero\}
\end{aligned}
$$
\item For 2-forms,
$$
\begin{aligned}
\ker d &\ = \  \{\beta \st \c_\n\dt\n=0\ \mbox{\rm for}\ \n\ne\zero,
\mbox{\rm\ while\ }\c_\zero
\ \mbox{\rm is arbitrary}\} \\[0.01cm]
\im d\, &\ = \  \{\beta \st \c_\n\dt\n=0\ \mbox{\rm for}\ \n\ne\zero,\ \c_\zero=\zero\} \\[0cm]
\ker \delta &\ = \  \{\beta \st \c_\n=\lambda_\n\n\ \mbox{\rm for}\ \n\ne\zero,
\mbox{\rm\ while\ }\c_\zero \ \mbox{\rm is arbitrary}\} \\[0cm]
\im \delta\, &\ = \  \{\beta \st \c_\n=\lambda_\n\n\ \mbox{\rm for}
\ \n\ne\zero,\ \c_\zero=\zero\}
\end{aligned}
$$
\item For 3-forms,
$$
\begin{aligned}
\ker d&\ = \ \{\mbox{\rm all}\ \gamma\}\\[0.15cm]
\im d\, &\ = \  \{\gamma \st c_\zero=0\} \\[0.2cm]
\ker \delta &\ = \  \{\gamma \st c_\n=0\ \mbox{\rm for}\ \n\ne\zero,
\mbox{\rm\ while\ }\ c_\zero
\ \mbox{\rm is arbitrary}\} \\[0.1cm]
\im \delta\, &\ = \ \{0\}
\end{aligned}
$$
\end{itemize}

Therefore $\im d\subset\ker d$ and $\im \delta\subset\ker\delta$, and
the following orthogonal (with respect to the $L^2$ inner product) decompositions hold:
$$
\Omega^k(T^3)\ = \ \ker\delta\oplus\im d \ = \ \im\delta\oplus\ker d
$$
for $k=0,\ldots,3$. The $k$-forms in $\ker d\cap\ker\delta$ are the forms
whose Fourier series contain only constant terms. These are called \it harmonic \rm
$k$-forms because they are in the kernel of
the Laplacian $\Delta=d\delta+\delta d$, so we write $\Har^k(T^3)=\ker d\cap\ker\delta$.
We then have the Hodge decomposition 
$$
\Omega^k(T^3)\ = \ \im\delta\oplus\Har^k(T^3)\oplus \im d.
$$
The Laplacian $\Delta\colon\Omega^k(T^3)\to\Omega^k(T^3)$ preserves this Hodge decomposition, 
taking $\im \delta$ bijectively to itself, killing $\Har^k(T^3)$, and taking $\im d$ bijectively to itself.

\part{Proof of \propref{alp}}

We must show that if $\omega$ is any $C^\infty$ exact differential $k$-form on $T^3$, then the $(k-1)$-form $\alpha=\delta(\ph*\omega)$ is $C^\infty$ and satisfies
$d\alpha=\omega$, and that if $d\widetilde\alpha=\omega$ as well, then $\Vert\alpha\Vert_{L^2}
\le\Vert\widetilde\alpha\Vert_{L^2}$, with equality if and only if $\widetilde\alpha=\alpha$.

Because it is the case we use in \thmref{B}, we will carry this out 
specifically for 2-forms. The proofs for 1-forms and 3-forms
are essentially the same. 

For a 2-form 
$$
\beta\ = \ \sum_{\n\in\ints^3}\c_\n e^{i\n\dte\x}\dt \star d\x\,,
$$ 
its Laplacian is given by
$$
\Delta\beta\ = \ \sum_{\n\ne\zero}|\n|^2\c_\n e^{i\n\dte\x}\dt \star d\x.
$$
We can use convolution with the fundamental solution $\ph$ of the scalar Laplacian to express the Green's operator,
$$
\Gr(\beta)\ = \ \ph * \beta \ = \  \sum_{\n\ne \zero}{\c_\n\over|\n|^2}e^{i\n\dte\x}\dt \star d\x.
$$
Clearly the Laplacian $\Delta$ and the Green's operator are inverses of one another when applied to 2-forms $\beta$ with $\c_\zero=\zero$, equivalently, to 2-forms $\beta$ orthogonal to $\Har^2(T^3)$.

To prove \propref{alp}, assume that $\beta$ is exact, that is, 
$\beta\in\im d$. Then $\Gr(\beta)$ is also exact, hence certainly closed, and therefore
$$
\beta\ = \ \Delta\Gr(\beta)\ = \ (d\delta+\delta d)\Gr(\beta)\ = \ d\delta\Gr(\beta)\ = \ d\delta(\ph*\beta).
$$
Thus $\alpha=\delta(\ph*\beta)$ satisfies $d\alpha=\beta$.

If $\widetilde\alpha$ is any other 2-form satisfying $d\widetilde\alpha=\beta$, then $\widetilde\alpha$ differs from $\alpha$ by some closed 2-form, which must be $L^2$-orthogonal to $\alpha$ since $\alpha\in\im\delta$. Thus, $\Vert\alpha\Vert_{L^2}\le\Vert\widetilde\alpha\Vert_{L^2}$
by the Pythagorean theorem.

Since the Fourier coefficients of a $C^\infty$ 2-form decrease
faster than any negative power of $|\n|$, we have that $\Gr(\beta)$ and 
$\delta\Gr(\beta)$ will be $C^\infty$ if $\beta$ is. This completes the proof
of \propref{alp}.

\part{Proof of \thmref{B}, formula (3)}

This formula expresses Milnor's invariant $\mu(L)$ in terms of the Fourier coefficients of $\omega_L$:
$$
\mu(L)\ = \ 8\pi^3\sum_{\n\ne\zero}\frac{\a_\n\times\b_\n\dtew\n}{\|\n\|^2}
$$
where 
$$
\omega_L\ = \ \sum_{\n\ne\zero}\c_\n e^{i\n\dte\x}\dtew \star d\x
$$
with $\c_\n=\a_\n+i\b_\n$ and with $\a_\n$ and $\b_\n$ real.

The formula for $\omega_L$ is summed over $\n\ne\zero$ because the hypothesis
of pairwise linking numbers zero is equivalent to the vanishing of the harmonic component of $\omega_L$, which in turn is equivalent to the vanishing of its Fourier coefficient $\c_\zero$.

We begin with formula (1) of \thmref{B} for $\mu(L)$:
$$
\mu(L)\ = \ {1\over 2}\int_{T^3}\delta(\ph*\omega_L)\wedge\omega_L.
$$
We have
$$
\ph*\omega_L\ = \ \sum_{\n\ne\zero}\frac{\c_\n e^{i\n\dte\x}}{\|\n\|^2}\dtew \star d\x\,,
$$
and hence, from our table of derivatives and co-derivatives in terms of Fourier
series,
$$
\delta(\ph*\omega_L)\ = \ \sum_{\n\ne\zero} \frac{(\n\times\c_\n)\,i\,e^{i\n\dte\x}}{\|\n\|^2}\dtew d\x.
$$
To compute $\delta(\ph*\omega_L)\wedge\omega_L$, we use the fact that if $\vv \dtew d\x$ is a 1-form and $\w\dtew \! \star \!d\x$ is a 2-form, then
$$
(\vv\dtew d\x)\wedge(\w\dtew \! \star\!d\x)\ = \ (\vv\dtew \w)\,dV.
$$
Thus
$$
\delta(\ph*\omega_L)\wedge\omega_L\ = \ \sum_{\m\ne\zero}\sum_{\n\ne\zero}
\frac{(\n\times\c_\n)\dtew\c_\m\,ie^{i(\m+\n)\dte\x}}{\|\n\|^2}\,dV.
$$
When we insert this double sum into the integral 
$\mu(L)={1\over 2}\int_{T^3}\delta(\ph*\omega_L)\wedge\omega_L,$
most of the terms in the summation will integrate to zero, leaving only the terms
where $\m+\n=\zero$. In those cases, $e^{i(\m+\n)\dte\x}=e^0=1$ integrates
to $8\pi^3$, the volume of $T^3$.

Thus
$$
\begin{aligned}
\mu(L)&\ = \ 4\pi^3\sum_{\n\ne\zero}\frac{i(\n\times\c_\n)\dtew\c_{-\n}}{\|\n\|^2} \\
&\ = \ 4\pi^3\sum_{\n\ne\zero}\frac{i(\c_\n\times\c_{-\n})\dtew\n}{\|\n\|^2}
\end{aligned}
$$
and it remains to simplify this last series.

Since $\c_\n=\a_\n+i\b_\n$ and since $\omega_L$ is real-valued, we have
$
\c_{-\n} = \bah{\c_\n} = \a_\n-i\b_\n.
$
Hence
$$
\c_\n\times\c_{-\n}\ = \ (\a_\n+i\b_\n)\times(\a_\n-i\b_\n)\ = \ 
-2i\,\a_\n\times\b_\n,$$
and therefore
$$
\begin{aligned}
\mu(L)&\ = \ 4\pi^3\sum_{\n\ne\zero}\frac{i(\c_\n\times\c_{-\n})\dtew\n}{\|\n\|^2}\\
&\ = \ 8\pi^3\sum_{\n\ne\zero}\frac{(\a_\n\times\b_\n)\dtew\n}{\|\n\|^2}\,,
\end{aligned}
$$
completing the proof of formula (3) and, with it, that of \thmref{B}.

\part{Numerical computation}

We used Maple and Matlab 
to calculate an approximation to Milnor's $\mu$-invariant for the three-component link $L$
in $S^3$ given by
$$
\aligned
x(s)&\ = \ [\textstyle{4\over 5}\sin s,\cos s,\textstyle{3\over 5}\sin s,0], \\
y(t)&\ = \ [\textstyle{4\over 5}\sin t,0,\cos t,\textstyle{3\over 5}\sin t], \\
z(u)&\ = \ [\textstyle{4\over 5}\sin u,\textstyle{3\over 5}\sin u,0,\cos u],
\endaligned
$$
for $s\in[0,2\pi]$,
$t\in[0,2\pi]$ and $u\in[0,2\pi]$,
which is a concrete realization of the Borromean rings with $\mu=-1$. 

\begin{figure}[h!]
\includegraphics[height=130pt]{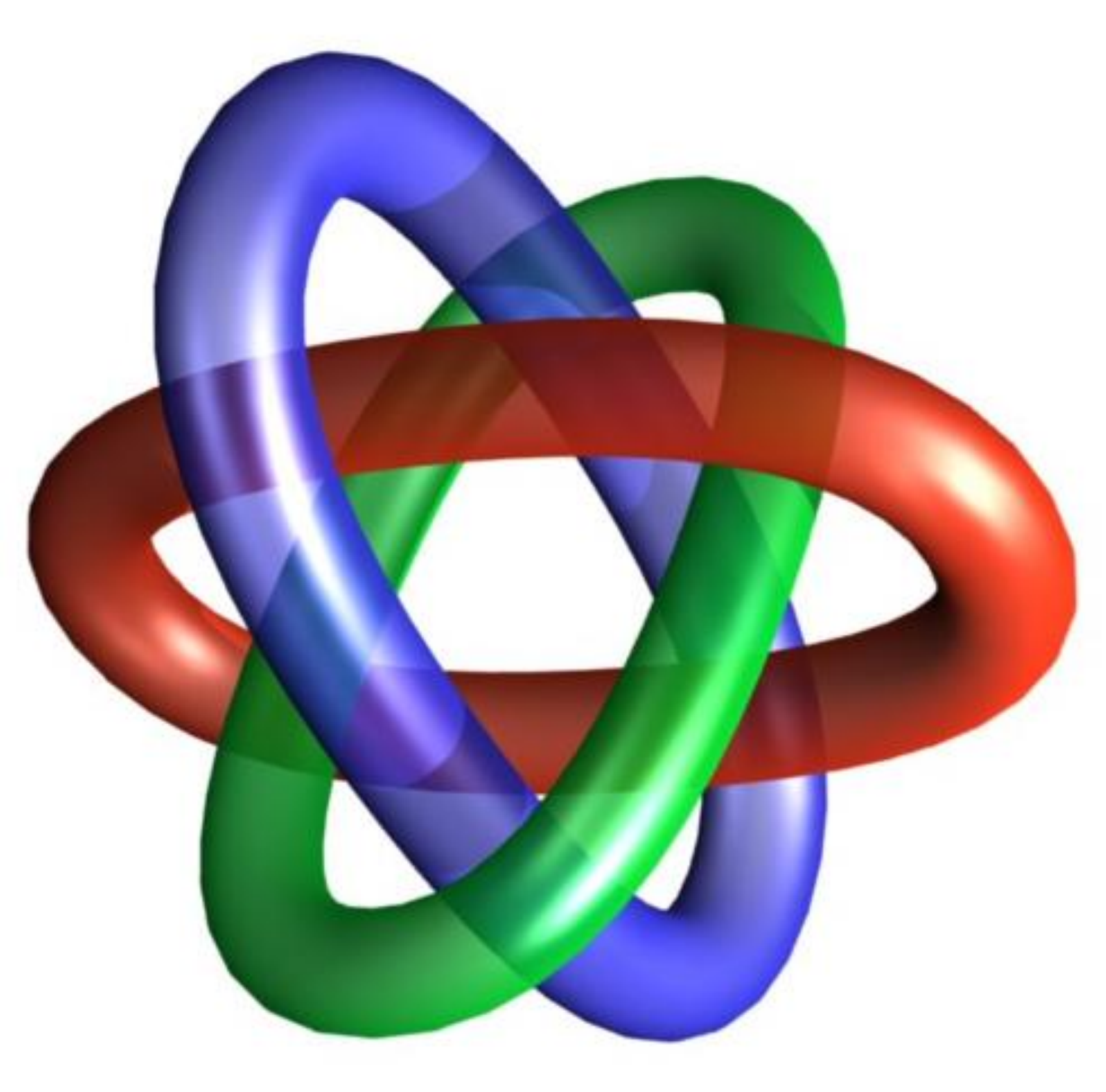}
\caption{Borromean rings}
\label{fig:bor}
\end{figure}

Maple gave the (long, ugly) formula for $\omega_L$ obtained from 
\propref{char}, and we used Matlab to calculate approximations to its Fourier coefficients $\c_\n$
for divisions of the $s$, $t$ and $u$ intervals into 60 subintervals and for $\n\in[-15,15]^3$. The approximation of $\mu$ we obtained in this way was $-0.97$. 

\appendix

\section*{Appendix.  Whitehead's integral formula for the Hopf invariant}

We take this opportunity to make J.\,H.\,C.~Whitehead's integral formula for the Hopf invariant of a map $f\colon S^3\to S^2$ explicit, and have relegated it to an appendix because we do not use the result in this paper.

Recall from \secref{B} Whitehead's formula, 
$$
\Hopf(f) \ = \ \int_{S^3}\alpha\wedge\omega_f,
$$
where $\omega_f=f^\dast\omega$ is the pullback to $S^3$ of the normalized area form $\omega$ on $S^2$, and $\alpha$ is any 1-form on $S^3$ such that $d\alpha=\omega_f$. To make this formula explicit requires a way to produce such an $\alpha$. 

To do this, we first write Whitehead's formula in the language of vector fields,
$$
\Hopf(f) \ = \ \int_{S^3}\veca\dtw\vec_f\ d\vol,
$$
where the 2-form $\omega_f$ on $S^3$ has been converted to the vector field $\vec_f$ in the usual way, and where $\veca$ is any vector field on $S^3$ such that $\nabla\times\veca=\vec_f$.

An explicit recipe for $\veca$ was given by DeTurck and Gluck~\cite{DeTurckGluck}:
$$
\veca(\y) \ = \ \BS(\vec_f)(\y) \ = \ \int_{S^3}P_{\y\x}\vec_f(\x)\times\nabla_{\!\y}\,\ph(\x,\y)\,d\x.
$$
Here $\BS$ is the Biot--Savart operator for vector fields on the $3$-sphere.  In the last integral, $P_{\y\x}$ indicates parallel transport in $S^3$ along the geodesic segment from $\x$ to $\y$, and the function $\ph$ is given by
$$
\ph(\alpha) \ = \ -{1\over 4\pi^2}(\pi-\alpha)\csc\alpha,
$$
and $\ph(\x,\y)$ is an abbreviation for $\ph(\alpha(\x,\y))$, with
$\alpha(\x,\y)$ the geodesic distance on $S^3$ between $\x$ and $\y$. 

The significance of the above function $\ph$ is that it is the fundamental solution of a shifted 
Laplacian on $S^3$: 
$$-\Delta\ph-\ph=\delta,$$
where $\delta$ is the Dirac delta function. 

The above formula for $\BS(\vec_f)$ is the analogue on $S^3$ of the classical Biot--Savart formula from 
electrodynamics in $\reals^3$, expressing the magnetic field $\BS(\vec_f)$ in terms of the current 
flow $\vec_f$. The equation $\nabla\times\BS(\vec_f)=\vec_f$ is just one of Maxwell's 
equations, transplanted to $S^3$.

Inserting this formula for $\BS(\vec)$ into the previous formula for the 
Hopf invariant and performing simple manipulations, we get
$$
\Hopf(f) \ = \ -\int_{S^3\times S^3} P_{\y\x}\vec_f(\x)\times\vec_f(\y)\dtw
\nabla_{\!\y}\,\ph(\x,\y)\,d\x\,d\y,
$$
the explicit version of Whitehead's integral formula on the 3-sphere $S^3$.


\providecommand{\bysame}{\leavevmode\hbox to3em{\hrulefill}\thinspace}
\providecommand{\MR}{\relax\ifhmode\unskip\space\fi MR }
\providecommand{\MRhref}[2]{%
  \href{http://www.ams.org/mathscinet-getitem?mr=#1}{#2}
}
\providecommand{\href}[2]{#2}

\vskip 1in

  \begin{center}
  \includegraphics[height=125pt]{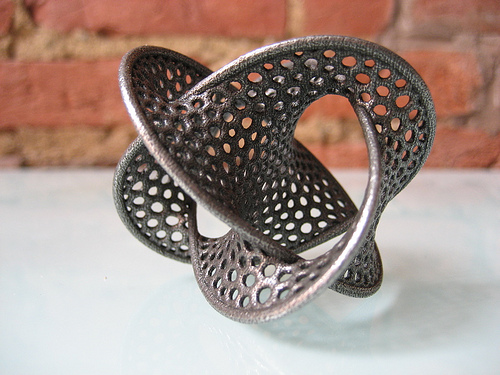}
  \put(-115,-24){\textsc{parting shot}}
  \put(-190,-40){The Borromean rings with their Seifert surface}
  \end{center}

\end{document}